%%%%%%%%%%%%%%%%%%%%%%%%%%%%%%%%%%%%%%%%%%%%%%%%%%%%%%%%%%%%%%%%%%%%%%%%
%%   The Morse index of a classical triply periodic minimal surface   %%
%%                                 by                                 %%
%%                     Norio Ejiri and Toshihiro Shoda                %%
%%%%%%%%%%%%%%%%%%%%%%%%%%%%%%%%%%%%%%%%%%%%%%%%%%%%%%%%%%%%%%%%%%%%%%%%
%\documentclass[draft]{article}
\documentclass{article}
\usepackage{amssymb, amsmath, amscd, latexsym, enumerate}
\usepackage{eepic}
\usepackage[dvips]{graphicx} 
\usepackage[usenames]{color}
%\usepackage{showkeys}
%
%%%%%%%%%%%%%%%%%%%%%%%%%%%%%%%%%%%%%%%%%%%%%%%%%%%%%%%%%%%%
\title{The Morse index of a triply periodic \\
minimal surface}
\author
 {Norio Ejiri   
    \thanks{Partly supported by JSPS Grant-in-Aid for 
    Scientific Research (C) 15K04859.}
 \and 
  Toshihiro Shoda 
    \thanks{Partly supported by JSPS Grant-in-Aid for Scientific Research (C) 16K05134.
  \newline \indent 2000 {\em Mathematics Subject Classification.} 
                   Primary 53A10; Secondary 49Q05, 53C42.
  \newline \indent {\em Key words and phrases.} 
                   minimal surfaces, flat tori, Morse index.}
 }
\date{}
%%%%%%%%%%%%%%%%%%%%%%%%%%%%%%%%%%%%%%%%%%%%%%%%%%%%%%%%%%%%
% Theorem-like Environments
%%%%%%%%%%%%%%%%%%%%%%%%%%%%%%%%%%%%%%%%%%%%%%%%%%%%%%%%%%%%
\usepackage{amsthm}
\theoremstyle{plain}
 \newtheorem{theorem}{Theorem}[section]
 \newtheorem{theorem*}{Main Theorem}
 \newtheorem*{conjecture}{Conjecture}

 \newtheorem{lemma}[theorem]{Lemma}
 \newtheorem{corollary}[theorem]{Corollary}
\theoremstyle{definition}

 \newtheorem{remark}[theorem]{Remark}

%%%%%%%%%%%%%%%%%%%%%%%%%%%%%%%%%%%%%%%%%%%%%%%%%%%%%%%%%%%%

\renewcommand{\Re}{\operatorname{Re}}
\renewcommand{\Im}{\operatorname{Im}}

\numberwithin{equation}{section}
\numberwithin{figure}{section}
\numberwithin{table}{section}
%%%%%%%%%%%%%%%%%%%%%%%%%%%%%%%%%%%%%%%%%%%%%%%%%%%%%%%%%%%%
\begin{document}
\maketitle

\begin{abstract}
In the previous work, the first author established an algorithm to compute 
the Morse index and the nullity of an $n$-periodic minimal surface in $\mathbb{R}^n$. 
In fact, the Morse index can be translated into the number of negative eigenvalues of a real symmetric matrix and 
the nullity can be translated into the number of zero-eigenvalue of a Hermitian matrix. The two key matrices consist of 
periods of the abelian differentials of the second kind on a minimal surface, and 
the signature of the Hermitian matrix gives a new invariant of a minimal surface. 
On the other hand, H family, rPD family, tP family, tD family, and tCLP family 
of triply periodic minimal surfaces in $\mathbb{R}^3$ 
have been studied in physics, chemistry, and crystallography. 
In this paper, we first determine the two key matrices 
for the five families explicitly. 
As its applications, by numerical arguments, we compute the Morse indices, nullities, and signatures 
for the five families. 
\end{abstract}

\section{Introduction}

The Morse index (resp. the nullity) of a compact oriented minimal submanifold in an oriented Riemannian manifold is defined as 
the sum of the dimensions of the eigenspaces corresponding to negative eigenvalues (resp. zero-eigenspace) of 
the Jacobi operator of the area. The purpose of this work is to compute the Morse index and the nullity of a compact oriented minimal surface in a flat three-torus. 
Moreover, we consider a signature of a minimal surface defined in \cite{Ej2}, and compute the signature 
of a compact oriented minimal surface in a flat three-torus. 
Now we refer to backgrounds. 

In 1968, Simons \cite{Sim} gave the second variational formula of the area and compute the Morse index and the nullity of a totally geodesic 
subsphere in the sphere. By his technique, we can see that the Morse index (resp. the nullity) of a totally geodesic subtorus in a flat 
three-torus is zero (resp. one). Next impressive developments were obtained by Montiel-Ros \cite{Mont-Ros} and 
Ross \cite{Ross}. 
Montiel-Ros considered the Dirichlet eigenvalue problem and the Neumann eigenvalue problem of the Laplacian to compute the Morse 
index and the nullity of a minimal surface.  
Their study includes the result that tCLP family consists of minimal surfaces with Morse index three and nullity three. 
Ross proved that Schwarz P surface, D surface, and Schoen's Gyroid are volume preserving stable, respectively. 
Applying his arguments, we find that each of the three minimal surfaces has Morse index one and nullity three. 
But the Morse index and the nullity have not been computed for other examples in the past two decades. 

Recently, the first author \cite{Ej1, Ej2} established a Moduli theory of compact oriented minimal surfaces in flat tori via 
the Morse index and the nullity. 
Also, he gave a procedure to compute the Morse index of a minimal surface with 
only trivial Jacobi fields. 
The procedure is to reduce computing the Morse index and 
the nullity of such minimal surfaces to fundamental arguments 
of eigenvalues in linear algebra. 
The main difficulty is to determine a canonical homology basis 
and the period matrix for each minimal surface explicitly. 
Our technique developed in this paper is to overcome this by using the method which is faithful to
the basics of compact Riemann surfaces, and shows that 
the procedure turns out to be practical. 
Recall that a normal vector field vanishing the Jacobi operator of the area is called 
a {\it Jacobi field} and the dimension of the space of Jacobi fields is equal to the nullity of a minimal surface. 
It is well-known that normal components of the Killing vector fields generated by translation on the torus give rise to Jacobi fields. 
So if we consider a non-totally geodesic compact oriented minimal surface in a flat $n$-torus $\mathbb{R}^n/\Lambda$, 
then it has nullity at least 
$n(=\dim \mathbb{R}^n/\Lambda)$. The Killing vector fields generated by translation on the torus is called {\it trivial Jacobi fields}, 
and a minimal surface has only trivial Jacobi fields if and only if its nullity is equal to $n$. 
Moreover, he introduced a new invariant for the Moduli theory of compact oriented minimal surfaces in 
flat tori which is called a signature of a minimal surface (see Section Two for the detail). 
The present work suggests that it might be easier to compute the signature than the Morse index. 

On the other hand, triply periodic minimal surfaces in $\mathbb{R}^3$ have been 
studied in physics, chemistry, and crystallography. 
Schr\"{o}der-Turk, Fogden, and Hyde \cite{TFH} studied one-parameter families of triply periodic minimal surfaces in $\mathbb{R}^3$. 
These one-parameter families which are called H family, rPD family, tP family, tD family, 
and tCLP family, 
contain many classical examples (Schwarz P surface, D surface, Schwarz H surface, 
and Schwarz CLP surface). 
Note that a triply periodic minimal surface properly immersed in $\mathbb{R}^3$ 
corresponds to a minimal immersion of a compact oriented surface into a flat three-torus. 
Hence the above one-parameter families are related to our works. 
Also, the above five families consist of minimal surfaces of genus three, and the following Ros' result clarifies an importance of 
such families. 
In fact, in this case, the minimum value of Morse index must be one (see Remark~\ref{rem-introduction}) and 
Ros \cite{Ros} proved that genus of a compact oriented minimal surface in a flat three-torus with Morse index one must be three. 
In the present paper, we compute the Morse index, the nullity, and the signature of each of the above one-parameter families. 

To state our main results, we review some fundamental arguments in the theory of minimal surfaces. 
Let $f:M\to \mathbb{R}^n/\Lambda$ be a minimal immersion of a compact oriented surface $M$ into a flat $n$-torus 
$\mathbb{R}^n/\Lambda$. With the induced conformal structure, $M$ is a compact Riemann surface and $f$ is called a 
{\it conformal minimal immersion}. Our object is then to study conformal minimal immersions of compact Riemann surfaces 
in flat $n$-tori. 
For a conformal minimal immersion, the following theorem is a basic tool. 
\begin{theorem}[Weierstrass representation formula]\label{W-rep}$\,$

Let $f:M \to \mathbb{R}^n/\Lambda$ be a conformal minimal 
immersion. Then, up to translations, $f$ can be represented by the following 
path-integrals{\rm :}
\[
f(p)=
\Re\int_{p_0}^p (
\omega_1, \,
\ldots ,\,
\omega_n )^t \quad {\rm mod}\>\>\> \Lambda, 
\]
where $p_0$ is a fixed point on $M$ and the $\omega_i$'s are holomorphic 
differentials on $M$ satisfying the following three conditions.  
\begin{align}
&\label{conf-cond} \omega_1^2+\cdots+\omega_n^2=0, \\
&\label{imm-cond} \omega_1,\,\ldots,\,\omega_n \>\> {\rm have}  \>\> {\rm no}  \>\>
{\rm common} \>\>{\rm zeros}, \\
&\label{period-cond} \left\{
\Re\int_{C} 
(\omega_1 ,\,
\ldots ,\,
\omega_n )^t \>\bigg|\>
C\in H_1(M,\,\mathbb{Z})
\right\} \>\>{\rm is}\>\>{\rm a}\>\>{\rm sublattice} \>\>{\rm of} 
\>\> \Lambda.
\end{align}
Conversely, the real part of path-integrals of holomorphic differentials satisfying 
the above three conditions defines a conformal minimal immersion of $M$ into a flat $n$-torus. 
\end{theorem}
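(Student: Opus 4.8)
The plan is to establish the standard dictionary between conformal minimal immersions into flat tori and their Weierstrass data, checking the three conditions term by term, and then reversing the construction. For the forward direction, fix a local holomorphic coordinate $z$ on $M$ and locally lift $f$ to $\tilde f=(\tilde f_1,\dots,\tilde f_n)$ with values in $\mathbb{R}^n$. Set $\omega_j:=2\,\partial\tilde f_j=2\,\frac{\partial\tilde f_j}{\partial z}\,dz$. I would first argue that each $\omega_j$ is a globally defined holomorphic differential on $M$: although $\tilde f_j$ is determined only up to an additive constant in the projection of $\Lambda$, the exact real $1$-form $d\tilde f_j$ and hence its $(1,0)$-part $\partial\tilde f_j$ descend to $M$; and since a conformal immersion into a flat torus is minimal if and only if it is harmonic, each $\tilde f_j$ is harmonic, so $\bar\partial(\partial\tilde f_j)=\tfrac14\Delta\tilde f_j\,d\bar z\wedge dz=0$, i.e. $\omega_j$ is holomorphic. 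Next I would translate the hypotheses on $f$ into \eqref{conf-cond}--\eqref{period-cond}: writing $f_z=(\partial_z\tilde f_1,\dots,\partial_z\tilde f_n)$, weak conformality of $f$ is equivalent to $\langle f_z,f_z\rangle=0$, that is $\sum_j\omega_j^2\equiv 0$; $f$ being an immersion is equivalent to $f_z\neq 0$ everywhere, i.e. the $\omega_j$ have no common zero; and from $d\tilde f_j=\partial\tilde f_j+\overline{\partial\tilde f_j}=2\Re(\partial\tilde f_j)=\Re\,\omega_j$ one gets $\Re\int_{C}\omega_j=\int_{C}d\tilde f_j$ for every cycle $C$, which lies in the projection of $\Lambda$ because $f$ is $\Lambda$-valued. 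Finally, integrating $d\tilde f_j=\Re\,\omega_j$ yields $\tilde f_j(p)=\tilde f_j(p_0)+\Re\int_{p_0}^{p}\omega_j$, which is exactly the asserted representation modulo the translation by $f(p_0)$.

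For the converse, suppose $\omega_1,\dots,\omega_n$ are holomorphic differentials on $M$ satisfying \eqref{conf-cond}--\eqref{period-cond}, and define $F_j(p):=\Re\int_{p_0}^{p}\omega_j$. By \eqref{period-cond} the ambiguity of the path-integral lies in $\Lambda$, so $f:=(F_1,\dots,F_n)$ is a well-defined smooth map $M\to\mathbb{R}^n/\Lambda$. In a local chart one has $\partial F_j=\tfrac12\omega_j$, since the anti-holomorphic part $\overline{\int\omega_j}$ of the primitive is annihilated by $\partial$; hence $\partial F_j$ is holomorphic and each $F_j$ is harmonic. Moreover $\langle f_z,f_z\rangle=\tfrac14\sum_j(\omega_j/dz)^2=0$ by \eqref{conf-cond}, so $f$ is weakly conformal, and $f_z\neq 0$ by \eqref{imm-cond}, so $f$ is a conformal immersion. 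A conformal harmonic immersion into a flat manifold is minimal, because for a conformal immersion the tension field is the conformal factor times the mean curvature vector; this finishes the proof.

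I do not expect any hard estimate; the delicate point is the global bookkeeping. On the one hand I must verify that the locally defined $(1,0)$-forms $\partial\tilde f_j$ genuinely patch to holomorphic differentials on the compact surface $M$ even though $f$ takes values in a torus rather than in $\mathbb{R}^n$; on the other hand I must check that \eqref{period-cond} is precisely the condition needed for $\Re\int\omega_j$ to descend to a map into $\mathbb{R}^n/\Lambda$. I would also keep careful track of the numerical factor relating $\omega_j$ and $\partial\tilde f_j$, and state precisely why ``conformal $+$ harmonic $\Rightarrow$ minimal'' holds for immersions into a flat target.
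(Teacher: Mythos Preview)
Your argument is correct and is the standard proof of the Weierstrass representation formula. The paper itself does not prove this theorem; it is stated there as a classical background tool with no accompanying proof, so there is nothing to compare against beyond confirming that what you wrote is the usual dictionary between conformal harmonic maps into flat targets and tuples of holomorphic differentials satisfying \eqref{conf-cond}--\eqref{period-cond}. The points you flag as delicate (that $\partial\tilde f_j$ is globally well-defined despite the $\Lambda$-ambiguity of the lift, and that \eqref{period-cond} is exactly what makes the primitive descend) are indeed the only places where care is needed, and you have handled them correctly.
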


Let $index_A$ (resp. $nullity_A$) denote the Morse index (resp. the nullity) of a minimal surface, 
and $(p,\,q)$ the signature of a minimal surface. 
Now we refer to our main results as follows. 
\begin{theorem*}[H family]\label{main-H}$\,$

For $a \in(0,\,1)$, let $M$ be the hyperelliptic Riemann surface of genus three 
defined by $w^2=z(z^3-a^3) \left( z^3-1/a^3 \right) $ and $f$ the conformal 
minimal immersion given by
\[f(p)=\Re \int^p_{p_0} i(1-z^2,\,i(1+z^2),\,2 z)^t \dfrac{d z}{w}.\]

Then there exist $0<a_1< a_2 <1$ satisfying the following properties{\rm :} \\
{\rm (i)} $index_A=2$, $nullity_A=3$, and $(p,\,q)=(5,\,4)$ for $a \in (0,\,a_1)$, \\
{\rm (ii)} $index_A=1$, $nullity_A=4$, and $(p,\,q)=(4,\,4)$ for $a = a_1$, \\
{\rm (iii)} $index_A=1$, $nullity_A=3$, and $(p,\,q)=(4,\,5)$ for $a \in (a_1,\,a_2)$, \\
{\rm (iv)} $index_A=1$, $nullity_A=5$, and $(p,\,q)=(4,\,3)$ for $a = a_2$, \\
{\rm (v)} $index_A=3$, $nullity_A=3$, and $(p,\,q)=(6,\,3)$ for $a \in (a_2,\,1)$, \\
where $a_1\approx 0.49701$, $a_2\approx 0.71479$. 
We obtain the similar results for $a>1$. 
\end{theorem*}

\begin{theorem*}[rPD family, Karcher TT surface]\label{main-rPD}$\,$

For $a \in(0,\,1]$, let $M$ be the hyperelliptic Riemann surface of genus three 
defined by $w^2=z(z^3-a^3) \left( z^3+1/a^3 \right) $ and $f$ the conformal 
minimal immersion given by
\[f(p)=\Re \int^p_{p_0} i(1-z^2,\,i(1+z^2),\,2 z)^t \dfrac{d z}{w}.\]
The case $a=1/\sqrt{2}$ corresponds to Schwarz {\rm P} surface. 

Then there exist $0<a_1<1$ satisfying the following properties{\rm :} \\
{\rm (i)} $index_A=2$, $nullity_A=3$, and $(p,\,q)=(5,\,4)$ for $a \in (0,\,a_1)$, \\
{\rm (ii)} $index_A=1$, $nullity_A=4$, and $(p,\,q)=(4,\,4)$ for $a = a_1$, \\
{\rm (iii)} $index_A=1$, $nullity_A=3$, and $(p,\,q)=(4,\,5)$ for $a \in (a_1,\,1]$, \\
where $a_1\approx 0.494722$. We obtain the similar results for $a\geq 1$. 
\end{theorem*}

\begin{remark}$\,$

$a_1$'s in Main Theorem~{\ref{main-H}} and Main Theorem~{\ref{main-rPD}} are considered from other point of view in \cite{TFH}. 
\end{remark}

\begin{theorem*}[tP family, tD family]\label{main-tP}$\,$

For $a \in(2,\,\infty)$, let $M$ be the hyperelliptic Riemann surface of genus three 
defined by $w^2=z^8+a z^4+1 $ and $f$ the conformal 
minimal immersion given by
\[f(p)=\Re \int^p_{p_0} (1-z^2,\,i(1+z^2),\,2 z)^t \dfrac{d z}{w}.\]
The case $a=14$ corresponds to Schwarz {\rm P} surface. 

Then there exist $2<a_1< 14 < a_2 <\infty$ satisfying the following properties{\rm :} \\
{\rm (i)} $index_A=2$, $nullity_A=3$, and $(p,\,q)=(5,\,4)$ for $a \in (2,\,a_1)$, \\
{\rm (ii)} $index_A=1$, $nullity_A=3$, and $(p,\,q)=(4,\,5)$ for $a \in (a_1,\,a_2)$, \\
{\rm (iii)} $index_A=2$, $nullity_A=3$, and $(p,\,q)=(5,\,4)$ for $a \in (a_2,\,\infty)$, \\
{\rm (iv)} $index_A=1$, $nullity_A=4$, and $(p,\,q)=(4,\,4)$ for $a = a_1,\,a_2 $, \\
where $a_1\approx 7.40284$, $a_2\approx 28.7783$. 
\end{theorem*}
\textbf{\it{tD family}} is defined as a family of conjugate surfaces of 
minimal surfaces which belong to tP family. Thus we obtain the same result for tD family as 
the above. 
\begin{theorem*}[tCLP family]\label{main-tCLP}$\,$

For $a \in(-2,\,2)$, let $M$ be the hyperelliptic Riemann surface of genus three 
defined by $w^2=z^8+a z^4+1 $ and $f$ the conformal 
minimal immersion given by
\[f(p)=\Re \int^p_{p_0} (1-z^2,\,i(1+z^2),\,2 z)^t \dfrac{d z}{w}.\]
The case $a=0$ corresponds to Schwarz {\rm CLP} surface. 

Then, for an arbitrary $a\in (-2,\,2)$, 
$index_A=3$, $nullity_A=3$, and $(p,\,q)=(6,\,3)$. 
\end{theorem*}

Our main theorems imply that every compact oriented minimal surface which belongs to the above five families 
has Morse index at most three, and we obtain the same results for other families as well \cite{ES2}. 
So we propose the following ``one-two-three conjecture": 
\begin{conjecture}$\,$

If a compact oriented minimal surface in a flat three-torus has genus three, then 
we have $1\leq index_A\leq 3$. 
\end{conjecture}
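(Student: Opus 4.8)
The inequality splits into $index_A\ge 1$ and $index_A\le 3$, and these are of entirely different difficulty. The lower bound is immediate and amounts to Remark~\ref{rem-introduction}: for a minimal immersion $f\colon M\to\mathbb{R}^3/\Lambda$ the flatness of the target makes the second variation of area on a normal variation $u$ equal to
\[
Q(u)=\int_M\bigl(|\nabla u|^2-|A|^2u^2\bigr)\,dA .
\]
Testing with $u\equiv 1$ gives $Q(1)=-\int_M|A|^2\,dA$, which is strictly negative because a genus-three surface is not totally geodesic (a totally geodesic surface in a flat torus is flat, hence of genus one), so $|A|^2\not\equiv 0$. Thus $index_A\ge 1$, and this holds for every non-flat surface regardless of genus.

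The upper bound is the real content. First I would recast the index spectrally in the manner of Montiel--Ros \cite{Mont-Ros}: since $|A|^2=2|K|$ and the Dirichlet energy is conformally invariant in dimension two, passing to the Gauss-map metric $\tilde g=|K|\,g=G^*g_{S^2}$ gives $Q(u)=\int_M(|\nabla_{\tilde g}u|^2-2u^2)\,dA_{\tilde g}$. Hence $index_A$ equals the number of eigenvalues of $\Delta_{\tilde g}$ strictly below $2$, where $G\colon M\to S^2$ is the conformal Gauss map, branched of degree $g-1=2$. The constant function (eigenvalue $0$) always contributes, again giving $index_A\ge 1$, while the three coordinate functions $x_i\circ G$ are eigenfunctions with eigenvalue exactly $2$; these are precisely the trivial Jacobi fields $\langle e_i,G\rangle$ and explain $nullity_A\ge 3$. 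The conjecture is therefore equivalent to the statement that every degree-two branched conformal cover of the round $S^2$ arising as such a Gauss map carries at most three eigenvalues below its first nonzero sphere-eigenvalue $2$.

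To obtain a computational grip I would then invoke the reduction of \cite{Ej1,Ej2}, under which this eigenvalue count becomes the number of negative eigenvalues of a real symmetric matrix $S$ assembled from the periods of the abelian differentials of the second kind attached to the Weierstrass data of Theorem~\ref{W-rep}. This matrix acts on a space whose dimension is fixed by the topology, consistent with the pattern $p+q+nullity_A=4g=12$ running through all the Main Theorems. The plan is to bound the signature of $S$ uniformly: to show that, over the entire moduli space of genus-three minimal surfaces in flat three-tori, $S$ never acquires a fourth negative eigenvalue, with the value $index_A=3$ attained as on the tCLP family of Main Theorem~\ref{main-tCLP}.

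The hard part, and the reason the statement is only conjectural, is uniformity across moduli. The five families already exhibit $index_A$ jumping among the values $1,2,3$, each jump occurring across a wall where $nullity_A>3$ and an eigenvalue of $S$ (equivalently of $\Delta_{\tilde g}$) crosses the threshold; see the transition parameters $a_1,a_2$ in Main Theorems~\ref{main-H} and \ref{main-tP}. Thus the explicit period computations for the five families, however suggestive, cannot by themselves close the argument. One needs either an a priori spectral estimate forbidding a fourth sub-threshold eigenvalue for every degree-two branched sphere, or a global wall-crossing analysis on Ejiri's moduli space showing that the negative-eigenvalue count of $S$ can never be driven above three. Producing such a uniform bound is exactly the obstacle that leaves the one-two-three conjecture open.
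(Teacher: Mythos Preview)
The statement is a \emph{conjecture} in the paper, not a theorem; the paper offers no proof of the upper bound $index_A\le 3$, only the numerical evidence of the five families together with parallel computations announced in \cite{ES2}. Your write-up is consistent with this: you establish the lower bound and then, correctly, explain why the upper bound remains open rather than proving it.

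Your argument for $index_A\ge 1$ is correct and is essentially the content of Remark~\ref{rem-introduction}: the constant test function makes $Q(1)=-\int_M|A|^2\,dA<0$ because a compact genus-three surface in a flat torus cannot be totally geodesic. The spectral reformulation via Montiel--Ros and the reduction to the Hermitian matrix $W$ of \cite{Ej1,Ej2} are also accurate, and your bookkeeping $p+q+nullity_A=12$ matches the $9\times 9$ size of $W$ together with $nullity_A=nullity_E+3$.

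Where you should be careful is in not overstating what the Ejiri machinery buys. The matrix $W$ (and the companion $W_2-W_1$) is constructed surface by surface from explicit period data; the paper's method is to compute it along each one-parameter family and read off the eigenvalue signs numerically. There is at present no structural argument bounding the number of negative eigenvalues of $W_2-W_1$ uniformly over the whole genus-three moduli space, and your final paragraph rightly identifies this as the missing ingredient. So your proposal is an honest outline of a strategy together with a clear statement of the gap, which is the appropriate response to a conjecture; just make sure the reader understands from the outset that no proof of the upper bound is being claimed.
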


The outline of the paper as follows: The second section contains a short story of the 
procedure to compute the Morse index and the nullity. Section Three gives explicit descriptions 
of the key matrices to compute the Morse index and the nullity for each one-parameter family. Section Four contains 
details of numerical arguments of our main results by Mathematica, and finally in Section Five there is a 
collection of calculations to determine a canonical homology basis and the period matrix 
for each one-parameter family of minimal surfaces as appendix. 

The authors would like to thank Wayne Rossman and Shoichi Fujimori for useful conversations 
about Mathematica. 

\section{An algorithm to compute the Morse index and the nullity}

In this section, we refer to a story to compute the Morse index and the nullity of a minimal surface in a flat torus. 
The details of the contents are given in \cite{Ej1, Ej2} (see also \cite{ES} for its outline). 

Let $R$ be a compact oriented surface of genus $\gamma$ and 
$\phi:R\to \mathbb{R}^n/\Lambda$ a smooth map. 
Schoen and Yau \cite{Sch-Yau} defined an energy $E_{\phi}$ on the Teichm\"{u}ller space $\mathcal{T}_{\gamma}$ with 
base surface $R$. 
Recall that a point in $\mathcal{T}_{\gamma}$ is an equivalent class of pairs $[(M,\,h)]$ of a compact Riemann surface $M$ 
of genus $\gamma$ and an orientation preserving diffeomorphism $h:R\to M$. The following result suggests that the energy $E_{\phi}$ is one of 
important objects in Differential Geometry. 

\begin{theorem}[\cite{Sa-Uh, Sch-Yau}]

A critical point $p=(M,\,h) \in \mathcal{T}_{\gamma}$ of $E_{\phi}$ is corresponding to 
a conformal {\rm (}branched{\rm )} minimal immersion $f:M\to \mathbb{R}^n/\Lambda$. 
\end{theorem}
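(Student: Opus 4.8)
The statement to prove is the theorem attributed to Sacks--Uhlenbeck and Schoen--Yau: a critical point $p=(M,h)\in\mathcal{T}_\gamma$ of the energy functional $E_\phi$ corresponds to a conformal (branched) minimal immersion $f:M\to\mathbb{R}^n/\Lambda$. Let me sketch how I would prove this.

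\textbf{Plan.} The energy $E_\phi$ on Teichmüller space is built as follows: for a point $[(M,h)]\in\mathcal{T}_\gamma$, one considers the harmonic map $f:M\to\mathbb{R}^n/\Lambda$ homotopic to $\phi\circ h^{-1}$ (which exists and is unique in each homotopy class into a flat torus, since the target is a nonpositively curved — indeed flat — manifold, by Eells--Sampson and Hartman), and sets $E_\phi([(M,h)])$ to be the Dirichlet energy of that harmonic map. So the first step is to recall this definition carefully and note that $f$ depends smoothly on the conformal structure. A harmonic map into a flat torus is just given locally by harmonic coordinate functions; lifting to $\mathbb{R}^n$, $f=(f^1,\dots,f^n)$ with each $f^i$ harmonic with respect to the conformal metric on $M$, equivalently each $df^i$ has harmonic (hence holomorphic $+$ antiholomorphic) structure, so $\partial f^i$ is a holomorphic differential.

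\textbf{Key steps.} (1) Compute the first variation of $E_\phi$ with respect to a deformation of the conformal structure on $M$. Deformations of conformal structure are parametrized infinitesimally by the Beltrami differentials, or dually by holomorphic quadratic differentials on $M$; the tangent space $T_{[(M,h)]}\mathcal{T}_\gamma$ is identified with the space of holomorphic quadratic differentials $H^0(M,K^2)$. (2) The crucial formula, essentially the one used in the theory of Teichmüller theory of harmonic maps (Schoen, Sampson, Wolf), is that the derivative of $E_\phi$ in the direction of a quadratic differential pairing is given by the $L^2$-pairing of that quadratic differential against the Hopf differential $\Phi(f)=(\partial f,\partial f)\,dz^2 = \sum_i (\partial f^i)^2$ of the harmonic map $f$. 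That is, $dE_\phi$ at $[(M,h)]$ vanishes if and only if the Hopf differential of the associated harmonic map vanishes identically. (3) Since $\sum_i (\partial f^i)^2 = 0$ is precisely the conformality condition \eqref{conf-cond} of the Weierstrass representation, and a harmonic conformal map is (by definition, up to branch points at common zeros) a branched minimal immersion, this shows that critical points of $E_\phi$ correspond exactly to conformal harmonic maps, i.e. branched minimal immersions. Conversely a conformal minimal immersion is harmonic and has vanishing Hopf differential, so it is a critical point.

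\textbf{Main obstacle.} The technical heart is establishing the first-variation formula in step (2): one must differentiate through the family of harmonic maps, which requires knowing that the harmonic map varies smoothly with the conformal structure (this follows from the implicit function theorem applied to the harmonic map equation, using uniqueness and the invertibility of the Jacobi operator of a harmonic map into a flat — hence nonpositively curved — target, so there are no nontrivial Jacobi fields obstructing the implicit function theorem), and then one integrates by parts, using that $f$ is harmonic to kill the terms involving the variation of $f$ itself, leaving only the term where the conformal structure is varied, which assembles into the pairing $\langle \Phi(f),\mu\rangle$. A secondary point to handle with care is the branch points: ``immersion'' must be relaxed to ``branched immersion'' precisely because the holomorphic differentials $\partial f^i$ may have common zeros, i.e. condition \eqref{imm-cond} can fail; this is why the statement says ``(branched)''. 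I would present steps (1)--(3) in that order, citing \cite{Sa-Uh, Sch-Yau} for the analytic underpinnings (existence, uniqueness, and smooth dependence of harmonic maps, and the variational characterization), and emphasize that the flatness of the target makes every analytic ingredient standard.
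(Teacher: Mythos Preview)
Your sketch is a correct outline of the standard proof, and it is essentially the argument one finds in the cited references \cite{Sa-Uh, Sch-Yau}. Note, however, that the paper itself does \emph{not} prove this theorem: it is stated as a quoted result from the literature, with no proof supplied. So there is nothing in the paper to compare your argument against beyond the citation.

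Two small technical remarks on your write-up. First, harmonic maps into a flat torus are not literally unique in a homotopy class: they are unique only up to translation in the target (the difference of two such maps lifts to a bounded harmonic function on $M$, hence a constant). This does not affect the definition of $E_\phi$ since energy is translation-invariant, but your phrase ``exists and is unique'' should be softened accordingly. Second, and relatedly, the Jacobi operator of such a harmonic map does have a nontrivial kernel (the infinitesimal translations), so your appeal to the implicit function theorem for smooth dependence on the conformal structure needs a word about working transverse to, or modulo, this finite-dimensional kernel. Both points are routine to fix and do not affect the substance of the argument.
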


Since $\mathcal{T}_{\gamma}$ is diffeomorphic to $\mathbb{R}^{6\gamma-6}$, we can define the Morse index and the nullity of 
$E_{\phi}$ at a critical point by the Hessian. Let $index_{E}$ denote the Morse index of 
$E_{\phi}$ at a critical point and $nullity_E$ the nullity of 
$E_{\phi}$ at a critical point. 
We can translate $index_a$ (resp. $nullity_a$) into $index_E$ (resp. $nullity_E$) as follows. 

\begin{theorem}[Theorem~{3.4} in \cite{Ej1}]

Suppose that  $f:M\to \mathbb{R}^n/\Lambda$ is a conformal minimal immersion and 
$p=[(M,\,h)] \in \mathcal{T}_{\gamma}$ is the corresponding critical point of $E_{\phi}$. Then 
\[
index_a = index_E, \quad nullity_a= nullity_E + n. 
\]
\end{theorem}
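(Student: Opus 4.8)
The plan is to compare two second-variation problems at the same conformal minimal immersion $f : M \to \mathbb{R}^n/\Lambda$: the area functional $A$ on the space of normal variations of $f$, and the energy $E_\phi$ on the Teichmüller space $\mathcal{T}_\gamma$. The basic principle is that energy dominates area, $E_\phi \geq A$, with equality exactly at conformal maps, so at a conformal critical point the difference $E_\phi - A$ is a nonnegative functional vanishing to second order in a controlled way; this forces the two Hessians to agree up to the contribution of those directions that move the conformal structure itself.

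First I would set up the two variational spaces carefully. For the area side, a normal variation of $f$ is a section of the normal bundle $N_f$, and the Hessian of $A$ is the usual Jacobi form $Q_A(V,V) = \int_M \big(|\nabla^\perp V|^2 - |A_f|^2 |V|^2 - \mathrm{Ric}(V,V)\big)$, which for a flat ambient torus simplifies (the Ricci term drops out). For the energy side, a tangent vector to $\mathcal{T}_\gamma$ at $[(M,h)]$ is a harmonic Beltrami differential (or, dually, an element of $H^0(M, K^2)^*$ paired against holomorphic quadratic differentials), and one computes the Hessian of $E_\phi$ along a path of marked Riemann surfaces, where $\phi$ is the fixed smooth map and on each fiber one takes the harmonic (hence energy-minimizing in its homotopy class) representative. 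The second step is to exploit the splitting of variations of a map $M \to \mathbb{R}^n/\Lambda$ into the normal part and the part coming from reparametrization / change of conformal structure: variations that are purely tangential or that correspond to diffeomorphisms of $M$ do not change the energy critical-point structure, while variations of the conformal structure are precisely the $\mathcal{T}_\gamma$-directions.

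Third, the key computation: along a deformation of the conformal structure, the energy $E_\phi$ of the harmonic map changes, and its second variation splits as the area Hessian $Q_A$ evaluated on the induced normal variation plus a positive-definite term coming from the $L^2$-metric on quadratic differentials (the Weil–Petersson type term), which accounts for the extra $n$ dimensions — these come from the fact that the translations of $\mathbb{R}^n$ act on $f$ producing $n$ independent Jacobi fields for $A$ (the trivial Jacobi fields) which, on the energy side, are \emph{not} null directions but correspond to genuine (zero) directions absorbed differently, or conversely: the nullity count on the energy side omits exactly the $n$-dimensional space of trivial Jacobi fields because those directions are not represented in $\mathcal{T}_\gamma$. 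Concretely I would show: (a) every negative direction of $Q_A$ produces a negative direction of the Hessian of $E_\phi$ and vice versa, giving $index_a = index_E$; (b) the null space of the Hessian of $E_\phi$ is isomorphic to the space of Jacobi fields of $A$ modulo the $n$-dimensional space spanned by the normal components of the constant (translational) Killing fields, giving $nullity_a = nullity_E + n$. Since this is Theorem 3.4 of \cite{Ej1}, I would present the argument at the level of identifying the two quadratic forms on matching finite-dimensional models and then invoke the already-established Weierstrass-representation description (Theorem~\ref{W-rep}) to make the correspondence between normal variations and variations of the $\omega_i$ explicit.

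The main obstacle is step three: correctly bookkeeping the reparametrization directions so that one neither double-counts nor drops the tangential variations, and pinning down that the ``extra'' piece in the energy Hessian is exactly the $n$-dimensional, positive-semidefinite (in fact identically the trivial Jacobi fields') contribution and nothing more. This requires the precise form of the first and second variation of energy for harmonic maps into flat tori and the identification of $T_{[(M,h)]}\mathcal{T}_\gamma$ with holomorphic quadratic differentials; the cleanest route is to use the Weierstrass data, write the energy density in terms of the $\omega_i$'s and the conformal structure, and differentiate twice, matching term by term against $Q_A$. I expect the computation to reduce, after this matching, to the standard fact that the Hopf differential of $f$ vanishes precisely because $f$ is conformal, which is what makes the cross-terms disappear and isolates the $+n$.
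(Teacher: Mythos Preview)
The paper does not prove this theorem; it is quoted from \cite{Ej1} (Theorem~3.4 there) as background for the algorithm in \S 2, so there is no proof in the present paper to compare your sketch against.

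On the merits of your sketch: the conceptual ingredients are right --- the inequality $E \geq A$ with equality exactly at conformal maps, and the observation that the $n$ trivial Jacobi fields (normal parts of translational Killing fields) lie in the kernel of the map ``normal variation $\mapsto$ induced Teichm\"uller deformation'', which is indeed the source of the $+n$ in the nullity formula. But step three as formulated does not close the argument. If the Hessian of $E_\phi$ split as $Q_A(V_\xi,V_\xi) + P(\xi,\xi)$ with $P \geq 0$, you would obtain only $index_E \leq index_A$: a Teichm\"uller direction $\xi$ that is negative for $E_\phi$ forces $Q_A(V_\xi,V_\xi)<0$, but a $Q_A$-negative normal variation $V$ need not arise as $V_\xi$ for some $\xi$ with $P(\xi,\xi)$ small enough to keep $\mathrm{Hess}(E_\phi)(\xi,\xi)<0$. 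To get equality of indices you must show that every $Q_A$-negative direction admits a lift to a Teichm\"uller direction along which the nonnegative piece $P$ actually vanishes --- equivalently, that the null cone of $\mathrm{Hess}(\mathcal{E}-A)$ (directions preserving conformality to first order) is large enough to contain representatives of the whole $Q_A$-negative eigenspace. This matching step is where the real content lies, and your outline does not address it.

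The route taken in \cite{Ej1}, as reflected in the period-matrix machinery of \S\S 2--3 here, is more algebraic than your variational comparison: because harmonic maps into flat tori are determined linearly by holomorphic one-forms and their periods, both Hessians can be written explicitly as finite-dimensional quadratic forms in period data (involving abelian differentials of the first and second kind), and the comparison becomes a linear-algebra identity rather than a soft inequality. Your $E \geq A$ framework may be completable, but the missing surjectivity/matching step above is precisely where the flat-torus target and the Weierstrass description must be used in earnest.
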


As an immediate consequence, we have 
\begin{corollary}[Corollary~{3.20} in \cite{Ej1}]

Suppose that $f:M\to \mathbb{R}^n/\Lambda$ is a conformal minimal immersion and 
$p=[(M,\,h)] \in \mathcal{T}_{\gamma}$ is the corresponding critical point of $E_{\phi}$. 
If $f$ has only trivial Jacobi fields, then $E_{\phi}$ is non-degenerate at $p$. 
\end{corollary}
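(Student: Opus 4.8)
The plan is to unwind the two preceding results and reduce the statement to a dimension count. By the cited theorem of Sacks--Uhlenbeck and Schoen--Yau, the conformal minimal immersion $f\colon M\to\mathbb{R}^n/\Lambda$ corresponds to a critical point $p=[(M,h)]$ of the energy $E_\phi$ on $\mathcal{T}_\gamma$, and by Theorem~3.4 in \cite{Ej1} (quoted above) we have $nullity_a = nullity_E + n$. Thus $E_\phi$ is non-degenerate at $p$, i.e.\ $nullity_E = 0$, precisely when $nullity_a = n$. So the corollary is equivalent to the assertion that ``$f$ has only trivial Jacobi fields'' forces $nullity_a = n$.

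Next I would invoke the characterization recalled in the introduction: a non-totally-geodesic compact oriented minimal surface in $\mathbb{R}^n/\Lambda$ always has $nullity_a \ge n$, because the $n$ normal components of the coordinate translational Killing fields are Jacobi fields, and these are by definition the trivial Jacobi fields. Hence the hypothesis ``$f$ has only trivial Jacobi fields'' says exactly that the space of Jacobi fields is spanned by these $n$ fields, which is the equality $nullity_a = n$ (one must note the trivial Jacobi fields are linearly independent over the space of normal fields when $f$ is a genuine immersion — this is where non-degeneracy of the immersion is used, and it is the only place where an actual argument, rather than bookkeeping, is needed). Combining with the previous paragraph gives $nullity_E = nullity_a - n = 0$, which is the conclusion.

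The main obstacle, such as it is, lies in the last parenthetical point: verifying that the $n$ translational Jacobi fields really do span an $n$-dimensional subspace of the nullspace of the Jacobi operator, i.e.\ that no nontrivial constant linear combination of the coordinate vector fields is everywhere tangent to $M$. For a non-totally-geodesic immersion $f$ this follows because such a combination being tangential on all of $M$ would force the image to lie in an affine hyperplane, contradicting either minimality and compactness (a compact minimal surface cannot lie in a hyperplane of a torus in a way compatible with being non-totally-geodesic) or the immersion hypothesis directly. Since this is already implicit in the discussion preceding Theorem~\ref{W-rep} and in \cite{Ej1}, the proof of the corollary itself is essentially a one-line application of the displayed formula $nullity_a = nullity_E + n$ together with the definition of ``only trivial Jacobi fields''; I expect the author's proof to be correspondingly short.
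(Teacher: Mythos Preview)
Your proposal is correct and matches the paper's treatment: the paper gives no proof at all, simply introducing the corollary with ``As an immediate consequence, we have'', since the identity $nullity_a = nullity_E + n$ together with the definition of ``only trivial Jacobi fields'' (which the introduction has already declared to mean $nullity_a = n$) immediately yields $nullity_E = 0$. Your parenthetical discussion of the linear independence of the translational Jacobi fields is unnecessary here, because the paper has built that equivalence into the definition; but it does no harm, and your expectation that the author's argument is essentially a one-liner is exactly right.
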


Before we refer to an algorithm to compute $index_E$ and $nullity_E$, we review some arguments. 

Let $\{A_j,\,B_j\}_{j=1}^{\gamma}$ be a canonical homology basis and $\{\varphi_1,\,\cdots,\, \varphi_{\gamma}\}$ 
a basis of the space of holomorphic differentials on a compact Riemann surface $M$. 
We set $\Phi=(\varphi_1,\,\cdots,\,\varphi_{\gamma})^t$. Then there exists a unique basis $\{\varphi_j\}_{j=1}^{\gamma}$ 
with the following property: 
\begin{equation}\label{riemann-mat}
\begin{pmatrix}
\displaystyle{\int_{A_1}} \Phi & \cdots & \displaystyle{\int_{A_{\gamma}}} \Phi  & 
\displaystyle{\int_{B_1}} \Phi & \cdots & \displaystyle{\int_{B_{\gamma}}} \Phi
\end{pmatrix}
=
\begin{pmatrix}
I_{\gamma} & \tau
\end{pmatrix}, 
\end{equation}
where $I_{\gamma}$ is the identity matrix of degree $\gamma$ and $\tau$ is a complex symmetric matrix of degree $\gamma$ 
with $\Im \tau >0$. This $\tau$ is called a {\it Riemann matrix} of $M$. 
Let $L_{m,\,n}$ be a set of real $(m,\,n)$ matrices and $K_{m,\,n}$ a set of complex $(m,\,n)$ matrices. 
Suppose that 
$ f(p)=\Re 
\begin{pmatrix}
\displaystyle{\int_{p_0}^p} \omega_1 & \cdots & \displaystyle{\int_{p_0}^p} \omega_n 
\end{pmatrix}^t$ is a conformal minimal immersion of a Riemann surface $M$ 
as in Theorem~\ref{W-rep}. Then we call 
\[
\Re
\begin{pmatrix}
\displaystyle{\int_{A_1}} \omega_1 & \cdots & \displaystyle{\int_{A_{\gamma}}} \omega_1 
& \displaystyle{\int_{B_1}} \omega_1 & \cdots & \displaystyle{\int_{B_{\gamma}}} \omega_1 \\
 &  \cdots &  &   & \cdots & \\ 
\displaystyle{\int_{A_1}} \omega_n & \cdots & \displaystyle{\int_{A_{\gamma}}} \omega_n 
& \displaystyle{\int_{B_1}} \omega_n & \cdots & \displaystyle{\int_{B_{\gamma}}} \omega_n
\end{pmatrix}
\in L_{n,\,2 \gamma} 
\]
a {\it real period matrix} and 
\[
\begin{pmatrix}
\displaystyle{\int_{A_1}} \omega_1 & \cdots & \displaystyle{\int_{A_{\gamma}}} \omega_1 
& \displaystyle{\int_{B_1}} \omega_1 & \cdots & \displaystyle{\int_{B_{\gamma}}} \omega_1 \\
 &  \cdots &  &   & \cdots & \\ 
\displaystyle{\int_{A_1}} \omega_n & \cdots & \displaystyle{\int_{A_{\gamma}}} \omega_n 
& \displaystyle{\int_{B_1}} \omega_n & \cdots & \displaystyle{\int_{B_{\gamma}}} \omega_n
\end{pmatrix}
\in K_{n,\,2\gamma}
\]
a {\it complex period matrix}. Let $L \in L_{n,\,2\gamma}$ be a real period matrix of $f$. 
By using a decomposition $L=(L_1,\, L_2)$ 
($L_j \in L_{n,\, \gamma}$), 
\[
\begin{pmatrix}
\omega_1 & \cdots & \omega_n
\end{pmatrix}^t =\dfrac{1}{2} (L_1+i [L_1 \Re\tau -L_2 ] (\Im \tau)^{-1} ) \Phi
\] 
holds (see \S~{7} in \cite{Ej1}, see also p.161 in \cite{ES}). 
Note that we have to assume \eqref{conf-cond} for the above. 
By setting 
\[
K(\tau,\,L)= \dfrac{1}{2} (L_1+i [L_1 \Re\tau -L_2 ] (\Im \tau)^{-1} ) 
\]
and \eqref{riemann-mat}, the complex period matrix can be written as 
\begin{equation}\label{cplx-period}
\begin{pmatrix}
K(\tau,\,L) & K(\tau,\,L) \tau
\end{pmatrix} 
\in K_{n,\,\gamma}\times K_{n,\,\gamma} = K_{n,\,2\gamma}. 
\end{equation}
Using \eqref{cplx-period}, we can construct a complex isotropic cone in 
$K_{n,\,\gamma}\times K_{n,\,\gamma}=K_{n,\,2\gamma}$. Moreover, 
if a minimal surface has only trivial Jacobi fields, then the complex isotropic cone defined by \eqref{cplx-period} must be 
complex Lagrangian (see Theorem~{8.1} in \cite{Ej2}, see also p.166 in \cite{ES}). 

To compute $index_E$ and $nullity_E$, we consider a basis of a tangent space of the complex 
isotropic cone by \eqref{cplx-period} derived from 
deformations of a complex structure of $M$ and an action of 
$SO(n,\,\mathbb{C}) \times (\mathbb{C} \setminus \{0\})$ (see p.169 in \cite{ES}). 
The basis are given by periods of the abelian differentials of the second kind. 
Recall that the abelian differentials of the second kind are meromorphic 
differentials with zero residues on a Riemann surface. 
Let $\{T_j\}_{j=1}^{n \gamma}$ be the basis of the tangent space of the complex 
isotropic cone by \eqref{cplx-period} in 
$ K_{n,\,\gamma}\times K_{n,\,\gamma}=K_{n,\,2\gamma}$, and we shall determine $\{T_j\}_{j=1}^{n \gamma}$ for each one-parameter 
family explicitly in the next section. Define 
\[
\eta ((Z_1,\,Z_2),\,(Z'_1,\,Z'_2)) := -i \, {\rm tr} ( (Z_2^t) \overline{Z'_1}-(Z_1^t) \overline{Z'_2}) 
\]
for $(Z_1,\,Z_2),\,(Z'_1,\,Z'_2) \in K_{n,\,\gamma}\times K_{n,\,\gamma}$ and $W := (\eta (T_i,\,T_j))$. 
Note that $\Re \eta$ is a pseudo K\"{a}hler metric given in \cite{Cort} (see \S~{9} in \cite{Ej2}, see also p.167 in \cite{ES}). 
$W$ is the Hermitian matrix of degree $n\gamma$ and it is one of the key matrices as we referred to in the abstract. 
We call a pair of the number of positive eigenvalues and that of 
negative eigenvalues of $W$ counted with multiplicities the {\it signature of a minimal surface}, and 
let $(p,\,q)$ denote the signature of a minimal surface. 
For a decomposition $T_j=(C_j,\,D_j)$ ($C_j,\,D_j \in K_{n,\,\gamma}$), we set 
\begin{align*}
K'_j &:= \Re C_j+i \{\Re C_j \Re\tau -\Re D_j \} (\Im \tau)^{-1}, \\ 
K''_j &:= \Re (i C_j)+i \{\Re (i C_j) \Re\tau -\Re (i D_j) \} (\Im \tau)^{-1}. 
\end{align*}
Define  
\begin{align*}
U_k&:= 
\begin{cases}
T_k & (1\leq k\leq n\gamma)\\
i T_{k-n\gamma} & (n\gamma+1\leq k \leq 2n\gamma)
\end{cases} \\ 
V_k&:= 
\begin{cases}
(K'_k,\,K'_k \tau) & (1\leq k\leq n\gamma)\\
(K''_{k-n\gamma},\,K''_{k-n\gamma} \tau)  & (n\gamma+1\leq k \leq 2n\gamma)
\end{cases}
\end{align*}
and $W_1:=((\Re \eta) (U_i,\,U_j))$, $W_2:=((\Re\eta) (V_i,\,V_j))$. 
Each $W_j$ is a real symmetric matrix of degree $2n\gamma$, and $W_2-W_1$ is another key matrix. 

Now we refer to an algorithm to compute the Morse index and the nullity of a minimal surface, that is, $index_E$ and $nullity_E$ 
for the one-parameter families in the introduction. 
It is well-known that, for the case $\gamma=n=3$, the Riemann surface $M$ is hyperelliptic (see Corollary~{3.2} in \cite{Mee}). 
Since we consider a family of minimal surfaces parametrized by 
$a$ which belongs to a suitable interval $I \subset \mathbb{R}$, 
$W$ and $W_2-W_1$ are also parametrized by $a$. $nullity_E$ is equal to 
the number of zero eigenvalues of $W$ counted with multiplicities (see Theorem 7.10 and \S~{14} in \cite{Ej1}, Theorem~{9.5} in \cite{Ej2}). 
Thus the set $\{a \in I \> | \> \det W=0 \}$ coincides with the set 
$\{a \in I \>|\> nullity_E\neq 0\}$ and it divides $I$ into some intervals. 
Note that $(p,\,q)$ and $index_E$ are constant on each divided interval. 
If the dimension of the zero eigenspace of $W_2-W_1$ is equal to $(2n-4)\gamma+2=8$, 
then $index_E-1$ is equal to the number of negative eigenvalues of $W_2-W_1$ counted with 
multiplicities (see Theorem 7.10, \S~{14} in \cite{Ej1}, Theorem~{9.6} in \cite{Ej2}, see also p.170 in \cite{ES}). 
Every family of minimal surfaces which we treat in this paper satisfies this assumption. 

We will use the notation $T_j$, $W$, $W_j$ below. 

\section{Key matrices}

\subsection{H family}

We start from the following Lemmas. 
\begin{lemma}\label{H-lemma1}
\begin{align}
d(z^{\alpha}w^{\beta})&=\dfrac{1}{2}z^{\alpha-1}w^{\beta-2}\{(2\alpha+7\beta)w^2+3\beta(a^3+1/a^3)z^4-6\beta z\}dz \label{eq:Hdiffs1}\\
&=\dfrac{1}{2}z^{\alpha-1}w^{\beta-2}\{(2\alpha+7\beta)z^7-(2\alpha+4\beta)(a^3+1/a^3)z^4 \label{eq:Hdiffs2}\\
&\nonumber \qquad\qquad\qquad\qquad\qquad\quad\qquad\qquad\qquad 
+(2\alpha+\beta)z\}dz.
\end{align}
\end{lemma}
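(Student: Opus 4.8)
The plan is to verify the two expressions for $d(z^\alpha w^\beta)$ by direct computation using the defining equation of the hyperelliptic curve, namely $w^2 = z(z^3-a^3)(z^3-1/a^3) = z^7 - (a^3+1/a^3)z^4 + z$. First I would differentiate logarithmically: since $z^\alpha w^\beta$ is (locally) a product, we have
\[
d(z^\alpha w^\beta) = z^\alpha w^\beta\left(\alpha\frac{dz}{z} + \beta\frac{dw}{w}\right) = z^{\alpha-1}w^{\beta}\,\alpha\,dz + z^\alpha w^{\beta-1}\,\beta\,dw.
\]
The key is to eliminate $dw$ in favour of $dz$. Differentiating $w^2 = z^7 - (a^3+1/a^3)z^4 + z$ gives $2w\,dw = \bigl(7z^6 - 4(a^3+1/a^3)z^3 + 1\bigr)dz$, so $dw = \tfrac{1}{2w}\bigl(7z^6 - 4(a^3+1/a^3)z^3 + 1\bigr)dz$. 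Substituting,
\[
d(z^\alpha w^\beta) = z^{\alpha-1}w^{\beta-2}\Bigl(\alpha w^2 + \tfrac{\beta}{2}z\bigl(7z^6 - 4(a^3+1/a^3)z^3 + 1\bigr)\Bigr)dz.
\]

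Next I would massage the bracket into the two claimed forms. For \eqref{eq:Hdiffs2}, I expand $z\bigl(7z^6 - 4(a^3+1/a^3)z^3+1\bigr) = 7z^7 - 4(a^3+1/a^3)z^4 + z$ and collect:
\[
\alpha w^2 + \tfrac{\beta}{2}\bigl(7z^7 - 4(a^3+1/a^3)z^4 + z\bigr) = \tfrac12\Bigl((2\alpha+7\beta)z^7 - (2\alpha+4\beta)(a^3+1/a^3)z^4 + (2\alpha+\beta)z\Bigr),
\]
using $w^2 = z^7 - (a^3+1/a^3)z^4 + z$ to replace $\alpha w^2$ by $\alpha z^7 - \alpha(a^3+1/a^3)z^4 + \alpha z$; this yields \eqref{eq:Hdiffs2} after factoring out the $\tfrac12$. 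For \eqref{eq:Hdiffs1}, instead of fully eliminating $w^2$ I would keep one copy: write $7z^6 - 4(a^3+1/a^3)z^3 + 1$ in terms of $w^2/z$ plus a correction. Since $w^2/z = z^6 - (a^3+1/a^3)z^3 + 1$, we get $7z^6 - 4(a^3+1/a^3)z^3 + 1 = 7\,w^2/z + 3(a^3+1/a^3)z^3 - 6$, hence $z\bigl(7z^6 - 4(a^3+1/a^3)z^3+1\bigr) = 7w^2 + 3(a^3+1/a^3)z^4 - 6z$, and therefore
\[
\alpha w^2 + \tfrac{\beta}{2}\bigl(7w^2 + 3(a^3+1/a^3)z^4 - 6z\bigr) = \tfrac12\bigl((2\alpha+7\beta)w^2 + 3\beta(a^3+1/a^3)z^4 - 6\beta z\bigr),
\]
which is exactly \eqref{eq:Hdiffs1}.

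Since both identities reduce to elementary polynomial rearrangements once $dw$ has been expressed through $dz$, there is no genuine obstacle here; the only thing to be careful about is bookkeeping of the constants $2\alpha+7\beta$, $2\alpha+4\beta$, $2\alpha+\beta$ and the sign conventions $w^2 = z(z^3-a^3)(z^3-1/a^3)$, so that the coefficient of $z^4$ carries $a^3+1/a^3$ with the correct sign. I would double-check by specializing to small cases, e.g. $(\alpha,\beta)=(1,0)$ giving $dz$, and $(\alpha,\beta)=(0,1)$ giving $dw$, to confirm both formulas collapse correctly.
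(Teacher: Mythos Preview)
Your proposal is correct and is exactly the ``straightforward calculation'' the paper alludes to without writing out: differentiate $w^2=z^7-(a^3+1/a^3)z^4+z$ to express $dw$ in terms of $dz$, substitute into $d(z^\alpha w^\beta)$, and rearrange. There is nothing to add; your bookkeeping of the coefficients $2\alpha+7\beta$, $2\alpha+4\beta$, $2\alpha+\beta$ is accurate.
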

\begin{proof}
Straightforward calculation. 
\end{proof}
\begin{lemma}\label{H-lemma2}$\,$

For an arbitrary $1${\rm -cycle} $\gamma$, we have the following formulae. 
\begin{align}
\int_{\gamma} \dfrac{z}{w^3}dz&=\dfrac{5}{6}\int_{\gamma}\dfrac{dz}{w}
+\dfrac{1}{2}\left(a^3+\dfrac{1}{a^3}\right)\int_{\gamma}\dfrac{z^4}{w^3}dz, 
\label{eq:Hintegrals1} \\
\int_{\gamma} \dfrac{z^2}{w^3}dz&=\dfrac{1}{2}\int_{\gamma}\dfrac{z}{w}dz
+\dfrac{1}{2}\left(a^3+\dfrac{1}{a^3}\right)\int_{\gamma}\dfrac{z^5}{w^3}dz,\label{eq:Hintegrals2}\\
\int_{\gamma} \dfrac{z^3}{w^3}dz&=\dfrac{1}{6}\int_{\gamma}\dfrac{z^2}{w}dz
+\dfrac{1}{2}\left(a^3+\dfrac{1}{a^3}\right)\int_{\gamma}\dfrac{z^6}{w^3}dz,\label{eq:Hintegrals3}\\
\int_{\gamma} \dfrac{dz}{w^3}&=
\dfrac{2}{3}\left(a^3+\dfrac{1}{a^3}\right)\int_{\gamma}\dfrac{z^2}{w}dz
+\left(2a^6+\dfrac{2}{a^6}-3\right)\int_{\gamma}\dfrac{z^6}{w^3}dz,\label{eq:Hintegrals4}\\
\int_{\gamma} \dfrac{z^7}{w^3}dz&=\dfrac{1}{6}\int_{\gamma}\dfrac{dz}{w}
+\dfrac{1}{2}\left(a^3+\dfrac{1}{a^3}\right)\int_{\gamma}\dfrac{z^4}{w^3}dz,\label{eq:Hintegrals5}\\
\int_{\gamma} \dfrac{z^8}{w^3}dz&=\int_{\gamma}\dfrac{z^2}{w^3}dz,\label{eq:Hintegrals6} \\
\int_{\gamma} \dfrac{z^9}{w^3}dz&=\dfrac{5}{6}\int_{\gamma}\dfrac{z^2}{w}dz
+\dfrac{1}{2}\left(a^3+\dfrac{1}{a^3}\right)\int_{\gamma}\dfrac{z^6}{w^3}dz.\label{eq:Hintegrals7}
\end{align}
\end{lemma}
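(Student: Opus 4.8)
The plan is to lean on a single elementary fact: for integers $\alpha,\beta$ the expression $z^{\alpha}w^{\beta}$ is a (globally single-valued) meromorphic function on $M$, so the period of the exact differential $d(z^{\alpha}w^{\beta})$ over any closed $1$-cycle $\gamma$ vanishes. Specializing the two expressions of Lemma~\ref{H-lemma1} to $\beta=-1$ (so that $w^{\beta-2}=w^{-3}$) then turns each choice of integer $\alpha$ into a linear relation among the periods. Writing $c:=a^3+1/a^3$ and abbreviating $\mathcal I_k:=\int_{\gamma}z^k w^{-3}\,dz$, $\mathcal J_k:=\int_{\gamma}z^k w^{-1}\,dz$, formula \eqref{eq:Hdiffs1} yields
\[(2\alpha-7)\,\mathcal J_{\alpha-1}-3c\,\mathcal I_{\alpha+3}+6\,\mathcal I_{\alpha}=0,\]
while formula \eqref{eq:Hdiffs2} (in which $w^2=z^7-cz^4+z$ has already been substituted) yields
\[(2\alpha-7)\,\mathcal I_{\alpha+6}-(2\alpha-4)c\,\mathcal I_{\alpha+3}+(2\alpha-1)\,\mathcal I_{\alpha}=0.\]
These two ``master relations'' are the only inputs; the rest is bookkeeping, and the point of the lemma is that each of the seven right-hand sides ends up involving only the holomorphic periods $\mathcal J_0,\mathcal J_1,\mathcal J_2$ together with $\mathcal I_4,\mathcal I_5,\mathcal I_6$.

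First I would take the first master relation at $\alpha=1,2,3$ (where $2\alpha-7=-5,-3,-1$) and solve each for $\mathcal I_{\alpha}$: these are precisely \eqref{eq:Hintegrals1}, \eqref{eq:Hintegrals2}, \eqref{eq:Hintegrals3}. Next I would use the second master relation at $\alpha=0$ to express $\mathcal I_0$ through $\mathcal I_3$ and $\mathcal I_6$, substitute the just-proved \eqref{eq:Hintegrals3} for $\mathcal I_3$, and rewrite $2c^2-7$ as $2a^6+2/a^6-3$; this gives \eqref{eq:Hintegrals4}. The same relation at $\alpha=1$ expresses $\mathcal I_7$ through $\mathcal I_1$ and $\mathcal I_4$, and feeding in \eqref{eq:Hintegrals1} gives \eqref{eq:Hintegrals5}; at $\alpha=2$ it degenerates to $-3\mathcal I_8+3\mathcal I_2=0$, which is \eqref{eq:Hintegrals6}; and at $\alpha=3$ it expresses $\mathcal I_9$ through $\mathcal I_3$ and $\mathcal I_6$, and feeding in \eqref{eq:Hintegrals3} gives \eqref{eq:Hintegrals7}.

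I do not expect a real obstacle here, since everything reduces to linear algebra in the periods once Lemma~\ref{H-lemma1} is in hand. The only two points that warrant a moment of care are (i) identifying the constant $2(a^3+1/a^3)^2-7$ with the stated $2a^6+2/a^6-3$ in \eqref{eq:Hintegrals4}, and (ii) checking, before invoking $\int_{\gamma}d(z^{\alpha}w^{-1})=0$, that $z^{\alpha}w^{-1}$ is a single-valued meromorphic function on the curve (it is, as $w$ is a coordinate function on $M$), so that no residue term intervenes. As an alternative one could run the whole argument off \eqref{eq:Hdiffs2} alone, trading each $\mathcal J_k$ for $\mathcal I_{k+7}-c\,\mathcal I_{k+4}+\mathcal I_{k+1}$ via $1/w=w^2/w^3$; but using \eqref{eq:Hdiffs1} directly for the first three identities is the shorter route.
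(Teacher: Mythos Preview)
Your proposal is correct and follows essentially the same route as the paper: both use $\int_\gamma d(z^\alpha w^{-1})=0$ with Lemma~\ref{H-lemma1}, taking \eqref{eq:Hdiffs1} at $\alpha=1,2,3$ for \eqref{eq:Hintegrals1}--\eqref{eq:Hintegrals3} and \eqref{eq:Hdiffs2} at $\alpha=0,1,2$ (combined with the earlier identities) for \eqref{eq:Hintegrals4}--\eqref{eq:Hintegrals6}. The only cosmetic difference is \eqref{eq:Hintegrals7}: you take \eqref{eq:Hdiffs2} at $\alpha=3$ and substitute \eqref{eq:Hintegrals3}, whereas the paper instead writes $z^9/w^3=(z^2/w^3)\{w^2+c\,z^4-z\}$ directly and then applies \eqref{eq:Hintegrals3}; the two computations are the same identity read in different orders.
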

\begin{proof}
Substituting $\alpha=1$, $\beta=-1$ to \eqref{eq:Hdiffs1} yields \eqref{eq:Hintegrals1}. 
Substituting $\alpha=2$, $\beta=-1$ to \eqref{eq:Hdiffs1} gives \eqref{eq:Hintegrals2}. 
Substituting $\alpha=3$, $\beta=-1$ \eqref{eq:Hdiffs1} implies \eqref{eq:Hintegrals3}. 
Substituting $\alpha=0$, $\beta=-1$ to \eqref{eq:Hdiffs2} yields 
\begin{align*}
\int_{\gamma} \dfrac{dz}{w^3}&=
4\left(a^3+\dfrac{1}{a^3}\right)\int_{\gamma}\dfrac{z^3}{w^3}dz
-7\int_{\gamma}\dfrac{z^6}{w^3}dz
\end{align*}
\begin{align*}
 &\underbrace{=}_{\eqref{eq:Hintegrals3}}
\dfrac{2}{3}\left(a^3+\dfrac{1}{a^3}\right)\int_{\gamma}\dfrac{z^2}{w}dz
+\left(2a^6+\dfrac{2}{a^6}-3\right)\int_{\gamma}\dfrac{z^6}{w^3}dz.
\end{align*}
Substituting $\alpha=1$, $\beta=-1$ to \eqref{eq:Hdiffs2} gives 
\begin{align*}
\int_{\gamma} \dfrac{z^7}{w^3}dz&=\dfrac{1}{5}\int_{\gamma}\dfrac{z}{w^3}dz
+\dfrac{2}{5}\left(a^3+\dfrac{1}{a^3}\right)\int_{\gamma}\dfrac{z^4}{w^3}dz \\
&\underbrace{=}_{\eqref{eq:Hintegrals1}}\dfrac{1}{6}\int_{\gamma}\dfrac{dz}{w}
+\dfrac{1}{2}\left(a^3+\dfrac{1}{a^3}\right)\int_{\gamma}\dfrac{z^4}{w^3}dz.
\end{align*}
Substituting $\alpha=2$, $\beta=-1$ to \eqref{eq:Hdiffs2} implies \eqref{eq:Hintegrals6}. 
Finally, we have 
\begin{align*}
\int_{\gamma} \dfrac{z^9}{w^3}dz&=
\int_{\gamma} \dfrac{z^2}{w^3}\left\{w^2+ 
\left(a^3+\dfrac{1}{a^3}\right)z^4-z \right\}dz\\
&=\int_{\gamma}\dfrac{z^2}{w}dz
+\left(a^3+\dfrac{1}{a^3}\right)\int_{\gamma}\dfrac{z^6}{w^3}dz
-\int_{\gamma}\dfrac{z^3}{w^3}dz\\
&\underbrace{=}_{\eqref{eq:Hintegrals3}}\dfrac{5}{6}\int_{\gamma}\dfrac{z^2}{w}dz
+\dfrac{1}{2}\left(a^3+\dfrac{1}{a^3}\right)\int_{\gamma}\dfrac{z^6}{w^3}dz.
\end{align*}
\end{proof}
Next we consider the basis of the tangent space of the complex Lagrangian cone via 
the complex period matrix. First we consider deformations of 
the complex structure of $M$. 
Let $a_1,\,a_2,\,\dots,\,a_6$ be six points in $\mathbb{C}\setminus \{0\}$ 
satisfying 
\[
w^2=z(z^3-a^3)\left(z^3-\dfrac{1}{a^3}\right)=z(z-a_1)(z-a_2)\cdots (z-a_6). 
\]
We now deform $M$ as follows: 
$w^2=z(z-z_1)(z-z_2)\cdots (z-z_6)$. 
Since 
\begin{align*}
2 w \dfrac{\partial w}{\partial z_i}\bigg|_{(z_1,\,\dots,\,z_6)=(a_1,\,\dots,\,a_6)} &= 
-z(z-a_1)\cdots (z-a_{i-1})(z-a_{i+1})\cdots (z-a_6) 
= -\dfrac{w^2}{z-a_i}, 
\end{align*}
we have 
\begin{align*}
\dfrac{\partial}{\partial z_i}\bigg|_{(z_1,\,\dots,\,z_6)=(a_1,\,\dots,\,a_6)} \int_{\gamma } \dfrac{1-z^2}{w}dz =
\int_{\gamma} \dfrac{\partial}{\partial z_i}\bigg|_{z=a_i}\dfrac{1-z^2}{w}dz =
\dfrac{1}{2} \int_{\gamma} \dfrac{1-z^2}{w (z-a_i)}dz
\end{align*}
for an arbitrary $1$-cycle $\gamma$. Applying the same technique, we find 
\begin{align*}
\dfrac{\partial}{\partial z_i}&\bigg|_{(z_1,\,\dots,\,z_6)=(a_1,\,\dots,\,a_6)}
\left( 
\int_{A_1}
\begin{pmatrix}
\dfrac{1-z^2}{w}dz  \\
\dfrac{i(1+z^2)}{w}dz  \\
\dfrac{2 z}{w}dz 
\end{pmatrix},\, 
\dots ,\, 
\int_{B_3}
\begin{pmatrix}
\dfrac{1-z^2}{w}dz  \\
\dfrac{i(1+z^2)}{w}dz  \\
\dfrac{2 z}{w}dz 
\end{pmatrix} 
\right) 
\end{align*}
\begin{align*}
&=
\dfrac{1}{2} \left(
\int_{A_1}
\begin{pmatrix}
\dfrac{1-z^2}{w (z-a_i)}dz  \\
\dfrac{i(1+z^2)}{w (z-a_i)}dz  \\
\dfrac{2 z}{w (z-a_i)}dz 
\end{pmatrix},\, \dots,\, 
\int_{B_3}
\begin{pmatrix}
\dfrac{1-z^2}{w (z-a_i)}dz  \\
\dfrac{i(1+z^2)}{w (z-a_i)}dz  \\
\dfrac{2 z}{w (z-a_i)}dz 
\end{pmatrix}
\right). 
\end{align*}
To describe 
\[
\int_{\gamma}\dfrac{1-z^2}{w (z-a_i)}dz, \quad \int_{\gamma}\dfrac{i(1+z^2)}{w (z-a_i)}dz, \quad 
\int_{\gamma}\dfrac{2 z}{w (z-a_i)}dz 
\]
via periods of the abelian differentials of the second kind, we set 
\begin{align*}
w^2&= z(z-a_i)(z^5+\alpha_{i 4} z^4 +\alpha_{i 3} z^3+\alpha_{i 2} z^2+\alpha_{i 1} z+\alpha_{i 0} ) \\
   &= z(z^3-a^3)\left(z^3-\dfrac{1}{a^3}\right), 
\end{align*}
that is, 
\[
\alpha_{i 0}=-\dfrac{1}{a_i},\>\> \alpha_{i 1}=-\dfrac{1}{a_i^2},\>\>
\alpha_{i 2}=-\dfrac{1}{a_i^3},\>\> \alpha_{i 3}=a_i^2,\>\> \alpha_{i 4}=a_i. 
\]
By Lemma~\ref{H-lemma2}, we have 
\begin{align*}
&\int_{\gamma} \dfrac{dz}{w(z-a_i)} = \int_{\gamma} \dfrac{1}{w^3}z(z^5+\alpha_{i 4} z^4 +\alpha_{i 3} z^3+\alpha_{i 2} z^2
+\alpha_{i 1} z+\alpha_{i 0}) dz \\
&=\dfrac{5}{6} \alpha_{i 0} \int_{\gamma} \dfrac{dz}{w}+ \dfrac{1}{2} \alpha_{i 1} \int_{\gamma} \dfrac{z}{w}dz +
\dfrac{1}{6} \alpha_{i 2} \int_{\gamma} \dfrac{z^2}{w}dz \\
&+ \left\{\dfrac{\alpha_{i 0}}{2} \left( a^3+\dfrac{1}{a^3} \right)  + \alpha_{i 3} \right\} \int_{\gamma} \dfrac{z^4}{w^3}dz 
+ \left\{\dfrac{\alpha_{i 1}}{2} \left( a^3+\dfrac{1}{a^3} \right) + \alpha_{i 4} \right\} \int_{\gamma} \dfrac{z^5}{w^3}dz \\
&+ \left\{\dfrac{\alpha_{i 2}}{2} \left( a^3+\dfrac{1}{a^3} \right) + 1 \right\} \int_{\gamma} \dfrac{z^6}{w^3}dz, \\
&\int_{\gamma} \dfrac{z}{w(z-a_i)} dz = 
\dfrac{1}{6} \int_{\gamma} \dfrac{dz}{w}+ \dfrac{1}{2} \alpha_{i 0} \int_{\gamma} \dfrac{z}{w}dz +
\dfrac{1}{6} \alpha_{i 1} \int_{\gamma} \dfrac{z^2}{w}dz \\
&+ \left\{\dfrac{1}{2} \left( a^3+\dfrac{1}{a^3} \right) + \alpha_{i 2} \right\} \int_{\gamma} \dfrac{z^4}{w^3}dz 
+ \left\{\dfrac{\alpha_{i 0}}{2} \left( a^3+\dfrac{1}{a^3} \right)  + \alpha_{i 3} \right\} \int_{\gamma} \dfrac{z^5}{w^3}dz \\
& + \left\{\dfrac{\alpha_{i 1}}{2} \left( a^3+\dfrac{1}{a^3} \right)  + \alpha_{i 4} \right\} \int_{\gamma} \dfrac{z^6}{w^3}dz, \\
&\int_{\gamma} \dfrac{z^2}{w(z-a_i)} dz = 
\dfrac{1}{6} \alpha_{i 4} \int_{\gamma} \dfrac{dz}{w}+ \dfrac{1}{2} \int_{\gamma} \dfrac{z}{w}dz +
\dfrac{1}{6} \alpha_{i 0} \int_{\gamma} \dfrac{z^2}{w}dz \\
&+ \left\{\dfrac{\alpha_{i 4}}{2} \left( a^3+\dfrac{1}{a^3} \right) + \alpha_{i 1} \right\} \int_{\gamma} \dfrac{z^4}{w^3}dz 
+ \left\{\dfrac{1}{2} \left( a^3+\dfrac{1}{a^3} \right) + \alpha_{i 2} \right\} \int_{\gamma} \dfrac{z^5}{w^3}dz \\
& + \left\{\dfrac{\alpha_{i 0}}{2} \left( a^3+\dfrac{1}{a^3} \right) + \alpha_{i 3} \right\} \int_{\gamma} \dfrac{z^6}{w^3}dz.
\end{align*}
Thus, setting
\begin{align*}
P_1 & = 
\begin{pmatrix}
1 & 0 & -1 \\
i & 0 & i \\
0 & 2 & 0
\end{pmatrix}, \quad 
P_2 = \begin{pmatrix}
\dfrac{1}{2} & -\dfrac{i}{2} & 0 & 0 & 0 & 0 \\
0 & 0 & \dfrac{1}{2} & 0 & 0 & 0 \\
-\dfrac{1}{2} & -\dfrac{i}{2} & 0 & 0 & 0 & 0 \\
0 & 0 & 0 & \dfrac{1}{2} & -\dfrac{i}{2} & 0 \\
0 & 0 & 0 & 0 & 0 & 1 \\
0 & 0 & 0 & -\dfrac{1}{2} & -\dfrac{i}{2} & 0 
\end{pmatrix}, 
\end{align*}
and 
\begin{align*}
P_{a_i} &= 
\begin{pmatrix}
-\dfrac{5}{6 a_i} & -\dfrac{1}{2 a_i^2} & -\dfrac{1}{6 a_i^3} & \dfrac{1}{2} \left( a_i^2-\dfrac{1}{a_i^4} \right) 
& \dfrac{1}{2} \left( a_i-\dfrac{1}{a_i^5} \right) & \dfrac{1}{2} \left( 1-\dfrac{1}{a_i^6} \right) \\
\dfrac{1}{6} & -\dfrac{1}{2 a_i} & -\dfrac{1}{6 a_i^2} & \dfrac{1}{2} \left( a_i^3-\dfrac{1}{a_i^3} \right) 
& \dfrac{1}{2} \left( a_i^2-\dfrac{1}{a_i^4} \right) & \dfrac{1}{2} \left( a_i-\dfrac{1}{a_i^5} \right) \\
\dfrac{a_i}{6} & \dfrac{1}{2} & -\dfrac{1}{6 a_i} & \dfrac{1}{2} \left(a_i^4-\dfrac{1}{a_i^2} \right) 
& \dfrac{1}{2} \left( a_i^3-\dfrac{1}{a_i^3} \right) & \dfrac{1}{2}\left( a_i^2-\dfrac{1}{a_i^4} \right)
\end{pmatrix}, 
\end{align*}
we find 
\[
\begin{pmatrix}
\int_{\gamma}\dfrac{1-z^2}{w (z-a_i)}dz      \\
\int_{\gamma}\dfrac{i(1+z^2)}{w (z-a_i)}dz   \\
\int_{\gamma}\dfrac{2 z}{w (z-a_i)}dz
\end{pmatrix}
=P_1 P_{a_i} P_2
\begin{pmatrix}
\int_{\gamma}\dfrac{1-z^2}{w}  dz    \\
\int_{\gamma}\dfrac{i(1+z^2)}{w} dz     \\
\int_{\gamma}\dfrac{2 z}{w} dz     \\
\int_{\gamma}\dfrac{z^4-z^6}{w^3} dz     \\
\int_{\gamma}\dfrac{i(z^4+z^6)}{w^3} dz    \\
\int_{\gamma}\dfrac{z^5}{w^3} dz     
\end{pmatrix}. 
\]
Let $\Omega_{{\rm H}}$ be the complex period matrix of the abelian differentials of the second kind (see \S~{\ref{H-detail}}), that is, 
\[
i
\begin{pmatrix}
0  &  \dfrac{\sqrt{3}}{2} (A+i B)  &   0  &  -\sqrt{3} A  &  -2\sqrt{3} A  &  -\sqrt{3} A  \\
2 A  &   \dfrac{-3 A+ i B}{2}   &  A- i B  & i B  &  0   &  A  \\
-i D  &   -C  &   2C+i D  &  C  &  0  &  i D  \\
0  &  -\dfrac{\sqrt{3}}{2} (E+i F)   &  0  &  \sqrt{3} E  &  2\sqrt{3} E  &  \sqrt{3} E  \\
-2 E  &   \dfrac{3 E- i F}{2}   &   -E+iF  &  -i F  &  0  &  -E  \\
i I  &  H   &   -2H-i I   &  -H  &  0  &  -i I
\end{pmatrix}.
\]
Then, choosing $a_1=a$, $a_2=e^{\frac{2}{3}\pi i} a$, $a_3=e^{\frac{4}{3}\pi i} a$, $a_4=1/a$, 
and $a_5=e^{\frac{2}{3}\pi i}/a $, 
we obtain five tangent vectors $\{T_i\}_{i=1}^5:= \left\{\frac{1}{2} P_1 P_{a_i} P_2 \Omega_{{\rm H}}\right\}_{i=1}^5$ in $K_{3,\,6}$. 
Moreover, we shall consider the tangent vectors via an action of 
$SO(3,\,\mathbb{C}) \times (\mathbb{C} \setminus \{0\})$. 
Setting $C_1$ and $C_2$ are complex matrices of degree $3$ given by 
\[
(C_1,\,C_2)=
i
\begin{pmatrix}
0  &  \dfrac{\sqrt{3}}{2} (A+i B)  &   0  &  -\sqrt{3} A  &  -2\sqrt{3} A  &  -\sqrt{3} A  \\
2 A  &   \dfrac{-3 A+ i B}{2}   &  A- i B  & i B  &  0   &  A  \\
-i D  &   -C  &   2C+i D  &  C  &  0  &  i D  
\end{pmatrix}, 
\]
we have the Riemann matrix $\tau=C_1^{-1}C_2$ and 
the following four tangent vectors in $K_{3,\,6}$: 
\begin{align*}
&T_6:=(C_1,\,C_2),\quad 
T_7:=\begin{pmatrix}
0 & 1 & 0 \\
-1 & 0 & 0 \\
0 & 0 & 0
\end{pmatrix}
(C_1,\,C_2), \\
&T_8:=\begin{pmatrix}
0 & 0 & 1 \\
0 & 0 & 0 \\
-1 & 0 & 0
\end{pmatrix}
(C_1,\,C_2),\quad 
T_9:=\begin{pmatrix}
0 & 0 & 0 \\
0 & 0 & 1 \\
0 & -1 & 0
\end{pmatrix}
(C_1,\,C_2). 
\end{align*}

\subsection{rPD family}

We begin by giving some lemmas. Choosing $i z$, $e^{-\frac{\pi}{4}i}w$, and $i a$ instead of $z$, $w$, and $a$ in 
Lemma~\ref{H-lemma1} 
and Lemma~\ref{H-lemma2}, respectively, we find the following two Lemmas. 
\begin{lemma}\label{rPD-lemma1}
\begin{align}
&d(z^{\alpha} w^{\beta})
=\dfrac{z^{\alpha-1} w^{\beta-2}}{2} 
\left\{ (2\alpha +7\beta) w^2 +3 \beta \left(a^3-\dfrac{1}{a^3}\right)
z^4+6\beta z\right\}dz \label{eq:TTexact1}\\
&=
\dfrac{z^{\alpha} w^{\beta-2}}{2} 
\left\{ (2\alpha +7\beta) z^6 +2(\alpha+2\beta)\left(-a^3+\dfrac{1}{a^3}\right)
z^3-(2\alpha+\beta)
\right\}dz \label{eq:TTexact2}
\end{align}
\end{lemma}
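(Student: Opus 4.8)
The plan is to mimic exactly the proof of Lemma~\ref{H-lemma1}, using the substitution indicated in the paragraph preceding the statement. Since the ``$\mathrm{rPD}$'' curve $w^2 = z(z^3-a^3)(z^3+1/a^3)$ is obtained from the ``$\mathrm{H}$'' curve $w^2 = z(z^3-a^3)(z^3-1/a^3)$ by the replacements $z \mapsto iz$, $w \mapsto e^{-\pi i/4} w$, $a \mapsto ia$, the whole computation reduces to carrying those substitutions through the two displayed identities in Lemma~\ref{H-lemma1}. First I would verify the substitution is consistent on the level of the defining equation: with $z' = iz$, $w' = e^{-\pi i/4}w$, $a' = ia$ one has $(w')^2 = -i\,w^2 = -i\,z(z^3-a^3)(z^3-1/a^3)$, and substituting $z = -iz'$, $a = -ia'$ one checks $-i\cdot(-iz')\bigl((-iz')^3-(-ia')^3\bigr)\bigl((-iz')^3-1/(-ia')^3\bigr) = z'\bigl((z')^3-(a')^3\bigr)\bigl((z')^3+1/(a')^3\bigr)$, since $(-i)^3 = i$, $1/(-i)^3 = 1/i = -i$, and the overall scalars combine to $1$. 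So the curves correspond under this change of variables.

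Next, the key step: rewrite the left-hand side $d(z^\alpha w^\beta)$. Under the substitution, $z^\alpha w^\beta$ becomes (up to the constant $e^{-\pi i\beta/4} i^\alpha$) the corresponding monomial on the $\mathrm{H}$ curve, and $d$ is natural under change of variables, so the identity \eqref{eq:Hdiffs1} transforms directly into \eqref{eq:TTexact1}. Concretely, in \eqref{eq:Hdiffs1} one replaces $z \to iz$, $w \to e^{-\pi i/4}w$ (hence $w^2 \to -iw^2$), $a^3 + 1/a^3 \to (ia)^3 + 1/(ia)^3 = -ia^3 + i/a^3 \cdot(-1)$... more carefully $(ia)^3 = -ia^3$ and $1/(ia)^3 = 1/(-ia^3) = i/a^3$, so $a^3+1/a^3 \mapsto -ia^3 + i/a^3 = -i(a^3 - 1/a^3)$; and $dz \to i\,dz$. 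Substituting these into the bracket $\{(2\alpha+7\beta)w^2 + 3\beta(a^3+1/a^3)z^4 - 6\beta z\}\,dz$ and collecting the common scalar $e^{-\pi i\beta/4}i^{\alpha}$ on both sides, the bracket becomes $\{(2\alpha+7\beta)(-i)w^2 + 3\beta(-i)(a^3-1/a^3)(iz)^4\cdot\text{(...)} - 6\beta(iz)\}\cdot i\,dz$; after simplifying $(-i)\cdot i = 1$ and $(iz)^4 = z^4$ one arrives at $\tfrac12 z^{\alpha-1}w^{\beta-2}\{(2\alpha+7\beta)w^2 + 3\beta(a^3-1/a^3)z^4 + 6\beta z\}\,dz$, which is \eqref{eq:TTexact1}. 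The second form \eqref{eq:TTexact2} then follows either by the same substitution applied to \eqref{eq:Hdiffs2}, or — cleaner — by substituting $w^2 = z(z^3-a^3)(z^3+1/a^3) = z^7 + \bigl(1/a^3 - a^3\bigr)z^4 - z$ into \eqref{eq:TTexact1} and regrouping; I would present it as a direct computation, exactly as the original lemma's proof says ``straightforward calculation.''

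The anticipated obstacle is purely bookkeeping: tracking the scalar factors $i^\alpha$ and $e^{-\pi i\beta/4}$ and the powers of $-i$ arising from $w^2 \mapsto -iw^2$, and making sure they cancel so that $\alpha, \beta$ reappear with integer (not shifted) coefficients and the signs in front of the $z^4$ and $z$ terms come out as $+3\beta(a^3-1/a^3)z^4$ and $+6\beta z$ rather than with the wrong sign. There is no conceptual difficulty — the identity is an algebraic consequence of the defining equation of the curve and the Leibniz rule, just as in Lemma~\ref{H-lemma1} — so the proof is again one line: ``Straightforward calculation, or apply the substitution $z \mapsto iz$, $w \mapsto e^{-\pi i/4}w$, $a \mapsto ia$ to Lemma~\ref{H-lemma1}.'' I would double-check one instance numerically (say $\alpha = 1$, $\beta = -1$) to be safe, since that is the case actually used later to derive the integral identities in the analogue of Lemma~\ref{H-lemma2}.
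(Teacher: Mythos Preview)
Your approach is correct and is exactly what the paper does: the paper's entire ``proof'' is the sentence preceding the lemma, which says to replace $z$, $w$, $a$ by $iz$, $e^{-\pi i/4}w$, $ia$ in Lemma~\ref{H-lemma1}. Your scalar bookkeeping is right (the factors $i^{\alpha}$, $e^{-\pi i\beta/4}$, and the $-i$ coming from $w^{\beta-2}\cdot i\,dz$ times the bracket all cancel), and deriving \eqref{eq:TTexact2} by substituting $w^2 = z^7 + (1/a^3-a^3)z^4 - z$ into \eqref{eq:TTexact1} is the cleanest route.
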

\begin{lemma}\label{rPD-lemma2}
For an arbitrary $1${\rm -cycle} $\gamma$, we have 
\begin{align}
\int_{\gamma} \dfrac{z}{w^3}dz &= -\dfrac{5}{6} \int_{\gamma} \dfrac{dz}{w}
+\dfrac{1}{2}\left(-a^3+\dfrac{1}{a^3}\right)\int_{\gamma} \dfrac{z^4}{w^3}dz, 
\label{eq:TTintegral1} \\
\int_{\gamma} \dfrac{z^2}{w^3}dz &= -\dfrac{1}{2} \int_{\gamma} \dfrac{z}{w}dz
+\dfrac{1}{2}\left(-a^3+\dfrac{1}{a^3}\right)\int_{\gamma} \dfrac{z^5}{w^3}dz, 
\label{eq:TTintegral2}\\
\int_{\gamma} \dfrac{z^3}{w^3}dz &= -\dfrac{1}{6} \int_{\gamma} \dfrac{z^2}{w}dz
+\dfrac{1}{2}\left(-a^3+\dfrac{1}{a^3}\right)\int_{\gamma} \dfrac{z^6}{w^3}dz, 
\label{eq:TTintegral3} \\
\int_{\gamma} \dfrac{dz}{w^3} &= \dfrac{2}{3} 
\left(a^3-\dfrac{1}{a^3}\right) \int_{\gamma} \dfrac{z^2}{w}dz
+\left(2 a^6+\dfrac{2}{a^6}+3\right)\int_{\gamma} \dfrac{z^6}{w^3}dz, 
\label{eq:TTintegral4} \\
\int_{\gamma} \dfrac{z^7}{w^3}dz &= \dfrac{1}{6} \int_{\gamma} \dfrac{dz}{w}
+\dfrac{1}{2}\left(a^3-\dfrac{1}{a^3}\right)\int_{\gamma} \dfrac{z^4}{w^3}dz, 
\label{eq:TTintegral5} \\
\int_{\gamma} \dfrac{z^8}{w^3}dz &= -\int_{\gamma} \dfrac{z^2}{w^3}dz, 
\label{eq:TTintegral6} 
\end{align}
\begin{align}
\int_{\gamma} \dfrac{z^9}{w^3}dz &= \dfrac{5}{6} \int_{\gamma} \dfrac{z^2}{w}dz
+\dfrac{1}{2}\left(a^3-\dfrac{1}{a^3}\right)\int_{\gamma} \dfrac{z^6}{w^3}dz. 
\label{eq:TTintegral7}
\end{align}
\end{lemma}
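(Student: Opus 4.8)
The plan is to obtain \eqref{eq:TTintegral1}--\eqref{eq:TTintegral7} from Lemma~\ref{rPD-lemma1} exactly as \eqref{eq:Hintegrals1}--\eqref{eq:Hintegrals7} were obtained from Lemma~\ref{H-lemma1}. First I would confirm why Lemma~\ref{rPD-lemma1} is the right ``twisted'' form of Lemma~\ref{H-lemma1}: substituting $z\mapsto iz$, $w\mapsto e^{-\pi i/4}w$, $a\mapsto ia$ into $w^2=z(z^3-a^3)(z^3-1/a^3)$ and using $(iz)^3=-iz^3$, $(ia)^3=-ia^3$, $1/(ia)^3=i/a^3$ and $(e^{-\pi i/4})^2=-i$, both sides pick up the common factor $-i$, so the equation becomes $w^2=z(z^3-a^3)(z^3+1/a^3)=z^7-(a^3-1/a^3)z^4-z$, the rPD curve. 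The ``straightforward calculation'' behind Lemma~\ref{H-lemma1}, run verbatim on this equation, then gives \eqref{eq:TTexact1}--\eqref{eq:TTexact2}.

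Granting Lemma~\ref{rPD-lemma1}, I would derive the seven period relations by the same bookkeeping used for Lemma~\ref{H-lemma2}: in each case substitute a small integer pair $(\alpha,\beta)$ into \eqref{eq:TTexact1} or \eqref{eq:TTexact2}, integrate over a closed $1$-cycle $\gamma$ so that the exact left-hand side drops out, and solve for the desired period. Concretely, $(\alpha,\beta)=(1,-1),(2,-1),(3,-1)$ in \eqref{eq:TTexact1} give \eqref{eq:TTintegral1}--\eqref{eq:TTintegral3}; then $(\alpha,\beta)=(0,-1)$ in \eqref{eq:TTexact2}, after eliminating $\int_\gamma z^3/w^3\,dz$ via \eqref{eq:TTintegral3} and expanding $(a^3-1/a^3)^2=a^6-2+1/a^6$, gives \eqref{eq:TTintegral4}; $(\alpha,\beta)=(1,-1)$ in \eqref{eq:TTexact2} together with \eqref{eq:TTintegral1} gives \eqref{eq:TTintegral5}; and $(\alpha,\beta)=(2,-1)$ in \eqref{eq:TTexact2} gives \eqref{eq:TTintegral6} outright. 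For \eqref{eq:TTintegral7} I would instead use the curve equation directly: from $z^2w^2=z^9-(a^3-1/a^3)z^6-z^3$, dividing by $w^3$, integrating over $\gamma$, and substituting \eqref{eq:TTintegral3}, the relation \eqref{eq:TTintegral7} falls out.

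Alternatively, each of \eqref{eq:TTintegral1}--\eqref{eq:TTintegral7} can be read straight off the corresponding \eqref{eq:Hintegrals1}--\eqref{eq:Hintegrals7} by the substitution above. Under it $dz\mapsto i\,dz$, $z^k\mapsto i^k z^k$, $w^{-1}\mapsto e^{\pi i/4}w^{-1}$, $w^{-3}\mapsto e^{3\pi i/4}w^{-3}$, and $a^3+1/a^3\mapsto i(-a^3+1/a^3)$, so that $2a^6+2/a^6-3=2(a^3+1/a^3)^2-7\mapsto -(2(a^3-1/a^3)^2+7)=-(2a^6+2/a^6+3)$. Each term of the transformed identity then carries a phase equal to some power of $e^{\pi i/4}$; dividing the whole identity through by the phase of its left-hand side turns every phase into $\pm1$, and the surviving signs are precisely those appearing in \eqref{eq:TTintegral1}--\eqref{eq:TTintegral7}.

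There is no conceptual obstacle here: everything reduces to the defining equation of the curve together with the vanishing of periods of exact differentials, and the only real work is sign bookkeeping. In the first route the most error-prone point is the elimination that produces the integer constant in \eqref{eq:TTintegral4}, where the cross term $2(a^3-1/a^3)^2$ must be expanded and combined with the $+7$ coming from \eqref{eq:TTexact2}. In the substitution route the subtlety to flag is that, unlike in a scale-homogeneous identity, different terms of one relation pick up different powers of $e^{\pi i/4}$, so one must divide by the left-hand phase rather than look for a common factor. I would use the first, direct route in the write-up, since it calls on Lemma~\ref{rPD-lemma1} alone and sidesteps any concern about whether $1$-cycles correspond under $z\mapsto iz$.
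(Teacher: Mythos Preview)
Your proposal is correct, and in fact you present both the paper's approach and a minor variant. The paper's actual proof is precisely your ``alternative'' route: it states that Lemma~\ref{rPD-lemma1} and Lemma~\ref{rPD-lemma2} follow from Lemma~\ref{H-lemma1} and Lemma~\ref{H-lemma2} by the substitution $z\mapsto iz$, $w\mapsto e^{-\pi i/4}w$, $a\mapsto ia$, with no further details. Your preferred direct derivation from Lemma~\ref{rPD-lemma1}---using the same $(\alpha,\beta)$ choices as in the proof of Lemma~\ref{H-lemma2}---is equally valid and is exactly what one would get by running the paper's proof of Lemma~\ref{H-lemma2} on the rPD curve; it has the mild advantage you note of not needing to think about how $1$-cycles transform, at the cost of repeating the bookkeeping. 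The two routes are not really different arguments, just two orderings of the same elementary computation.
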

Next we consider the basis of the tangent space of the complex Lagrangian cone via 
the complex period matrix. First we consider deformations of 
the complex structure of $M$. 
Let $a_1,\,a_2,\,\dots,\,a_6$ be six points in $\mathbb{C} \setminus \{0\}$ 
satisfying  
\[
w^2=z(z^3-a^3)\left(z^3+\dfrac{1}{a^3}\right)=z(z-a_1)(z-a_2)\cdots (z-a_6). 
\]
We now deform $M$ as follows: $w^2=z(z-z_1)(z-z_2)\cdots (z-z_6)$. 
Then, for an arbitrary $1$-cycle $\gamma$, we find 
\begin{align*}
\dfrac{\partial}{\partial z_i}&\bigg|_{(z_1,\,\dots,\,z_6)=(a_1,\,\dots,\,a_6)}
\left( 
\int_{A_1}
\begin{pmatrix}
\dfrac{1-z^2}{w}dz  \\
\dfrac{i(1+z^2)}{w}dz  \\
\dfrac{2 z}{w}dz 
\end{pmatrix},\, 
\dots ,\, 
\int_{B_3}
\begin{pmatrix}
\dfrac{1-z^2}{w}dz  \\
\dfrac{i(1+z^2)}{w}dz  \\
\dfrac{2 z}{w}dz 
\end{pmatrix} 
\right) \\
&=
\dfrac{1}{2} \left(
\int_{A_1}
\begin{pmatrix}
\dfrac{1-z^2}{w (z-a_i)}dz  \\
\dfrac{i(1+z^2)}{w (z-a_i)}dz  \\
\dfrac{2 z}{w (z-a_i)}dz 
\end{pmatrix},\, \dots,\, 
\int_{B_3}
\begin{pmatrix}
\dfrac{1-z^2}{w (z-a_i)}dz  \\
\dfrac{i(1+z^2)}{w (z-a_i)}dz  \\
\dfrac{2 z}{w (z-a_i)}dz 
\end{pmatrix}
\right). 
\end{align*}
To describe 
\[
\int_{\gamma}\dfrac{1-z^2}{w (z-a_i)}dz, \quad \int_{\gamma}\dfrac{i(1+z^2)}{w (z-a_i)}dz, \quad 
\int_{\gamma}\dfrac{2 z}{w (z-a_i)}dz 
\]
via periods of the abelian differentials of the second kind, we set 
\begin{align*}
w^2&= z(z-a_i)(z^5+\alpha_{i 4} z^4 +\alpha_{i 3} z^3+\alpha_{i 2} z^2+\alpha_{i 1} z+\alpha_{i 0} ) 
= z(z^3-a^3)\left(z^3+\dfrac{1}{a^3}\right), 
\end{align*}
that is, 
\[
\alpha_{i 0}=\dfrac{1}{a_i},\>\> \alpha_{i 1}=\dfrac{1}{a_i^2},\>\>
\alpha_{i 2}=\dfrac{1}{a_i^3},\>\> \alpha_{i 3}=a_i^2,\>\> \alpha_{i 4}=a_i. 
\]
Thus, setting 
\begin{align*}
P_1 & = 
\begin{pmatrix}
1 & 0 & -1 \\
i & 0 & i \\
0 & 2 & 0
\end{pmatrix}, \quad 
P_2 = \begin{pmatrix}
\dfrac{1}{2} & -\dfrac{i}{2} & 0 & 0 & 0 & 0 \\
0 & 0 & \dfrac{1}{2} & 0 & 0 & 0 \\
-\dfrac{1}{2} & -\dfrac{i}{2} & 0 & 0 & 0 & 0 \\
0 & 0 & 0 & \dfrac{1}{2} & -\dfrac{i}{2} & 0 \\
0 & 0 & 0 & 0 & 0 & 1 \\
0 & 0 & 0 & -\dfrac{1}{2} & -\dfrac{i}{2} & 0 
\end{pmatrix}, 
\end{align*}
\begin{align*}
P_{a_i} &= 
\begin{pmatrix}
-\dfrac{5}{6 a_i} & -\dfrac{1}{2 a_i^2} & -\dfrac{1}{6 a_i^3} & \dfrac{1}{2} \left( a_i^2+\dfrac{1}{a_i^4} \right) 
& \dfrac{1}{2} \left( a_i+\dfrac{1}{a_i^5} \right) & \dfrac{1}{2} \left( 1+\dfrac{1}{a_i^6} \right) \\
\dfrac{1}{6} & -\dfrac{1}{2 a_i} & -\dfrac{1}{6 a_i^2} & \dfrac{1}{2} \left( a_i^3+\dfrac{1}{a_i^3} \right) & 
\dfrac{1}{2} \left( a_i^2+\dfrac{1}{a_i^4} \right)  & \dfrac{1}{2} \left( a_i+\dfrac{1}{a_i^5} \right) \\
\dfrac{a_i}{6} & \dfrac{1}{2} & -\dfrac{1}{6 a_i} & \dfrac{1}{2} \left( a_i^4+\dfrac{1}{a_i^2} \right)  & 
\dfrac{1}{2} \left( a_i^3+\dfrac{1}{a_i^3} \right)  & \dfrac{1}{2} \left( a_i^2+\dfrac{1}{a_i^4} \right) 
\end{pmatrix}, 
\end{align*}
we find 
\[
\begin{pmatrix}
\int_{\gamma}\dfrac{1-z^2}{w (z-a_i)}dz      \\
\int_{\gamma}\dfrac{i(1+z^2)}{w (z-a_i)}dz   \\
\int_{\gamma}\dfrac{2 z}{w (z-a_i)}dz
\end{pmatrix}
=P_1 P_{a_i} P_2 
\begin{pmatrix}
\int_{\gamma}\dfrac{1-z^2}{w}  dz    \\
\int_{\gamma}\dfrac{i(1+z^2)}{w} dz     \\
\int_{\gamma}\dfrac{2 z}{w} dz     \\
\int_{\gamma}\dfrac{z^4-z^6}{w^3} dz     \\
\int_{\gamma}\dfrac{i(z^4+z^6)}{w^3} dz    \\
\int_{\gamma}\dfrac{z^5}{w^3} dz     
\end{pmatrix} 
\]
by Lemma~\ref{rPD-lemma2}. 
Let $\Omega_{{\rm rPD}}$ be the period matrix of the abelian differentials of the second 
kind (see \S~{\ref{rPD-detail}}), that is, 
\[i
\begin{pmatrix}
2 i B   &   -2 (A+i B)  &  -(A+i B)  & 2 A  &   3(A-i B)   &  2(A-i B) \\
-2\sqrt{3} A  &   0   &  \sqrt{3}  (A+i B)  &  -2\sqrt{3} i B  &  \sqrt{3} (A-i B)  &  0 \\
i D   &   C- i D  &  -C + i D  &  -C  &  0  &  -(C+i D) \\
- 2 i F   &   2 (-E+ i F)  &  -E+i F  &  2 E  & 3(E+i F)  &   2(E+i F) \\ 
-2\sqrt{3} E  &  0   &  \sqrt{3} (E-i F)  &  2\sqrt{3}  i  F  & \sqrt{3} (E+i F)  &  0  \\ 
i I  &   H - i I  &   -H+i I   &  -H  &  0  &  -(H+i I)
\end{pmatrix}. 
\]
Then, choosing $a_1=a$, $a_2=e^{\frac{2}{3}\pi i} a$, $a_3=e^{\frac{4}{3}\pi i} a$, $a_4=-1/a$, 
and $a_5=-e^{\frac{2}{3}\pi i}/a $, 
we obtain five tangent vectors $\{T_i\}_{i=1}^5:= \left\{\frac{1}{2} P_1 P_{a_i} P_2 \Omega_{{\rm rPD}}\right\}_{i=1}^5$ in $K_{3,\,6}$. 
Moreover, we shall consider the tangent vectors via an action of 
$SO(3,\,\mathbb{C}) \times (\mathbb{C} \setminus \{0\})$. 
Setting $C_1$ and $C_2$ are complex matrices of degree $3$ given by 
\[
(C_1,\,C_2)=
i
\begin{pmatrix}
2 i B   &   -2 (A+i B)  &  -(A+i B)  & 2 A  &   3(A-i B)   &  2(A-i B) \\
-2\sqrt{3} A  &   0   &  \sqrt{3}  (A+i B)  &  -2\sqrt{3} i B  &  \sqrt{3} (A-i B)  &  0 \\
i D   &   C- i D  &  -C + i D  &  -C  &  0  &  -(C+i D) 
\end{pmatrix}, 
\]
we have the Riemann matrix $\tau=C_1^{-1} C_2$ and 
the following four tangent vectors in $K_{3,\,6}$: 
\begin{align*}
&T_6:=(C_1,\,C_2),\quad 
T_7:=\begin{pmatrix}
0 & 1 & 0 \\
-1 & 0 & 0 \\
0 & 0 & 0
\end{pmatrix}
(C_1,\,C_2),\\
&T_8:=\begin{pmatrix}
0 & 0 & 1 \\
0 & 0 & 0 \\
-1 & 0 & 0
\end{pmatrix}
(C_1,\,C_2),\quad 
T_9:=\begin{pmatrix}
0 & 0 & 0 \\
0 & 0 & 1 \\
0 & -1 & 0
\end{pmatrix}
(C_1,\,C_2). 
\end{align*}

\subsection{tP family, tD family}\label{tP-section}

Recall that tD family is parametrized by $a\in (-\infty,\,-2)$ instead of 
$a \in (2,\,\infty)$. 
Using the reparametrization $(u,\,v)=(e^{-\frac{\pi}{4} i} z,\, w)$, we transform tD family to a family of conjugate 
surfaces of minimal surfaces which belong to tP family. 
Note that the Morse index and the nullity of a minimal surface depend only on 
its Gauss map (see for instance \cite{Mont-Ros}), and the signature of a minimal surface coincides with 
that of the associated surface (Theorem~{9.1} in \cite{Ej2}). 
Hence 
the Morse index of a minimal surface is equal to that of its conjugate surface and so are the nullity 
and the signature. 
Thus we treat only tP family. 

We begin by giving some lemmas. 
\begin{lemma}\label{tP-lemma1}
\begin{align}
d(z^{\alpha}w^{\beta})&=z^{\alpha-1}w^{\beta-2}\{(\alpha+4\beta)z^8
+a (\alpha+2\beta)z^4+\alpha\}dz \label{eq:tPdiffs1} \\
&=z^{\alpha-1}w^{\beta-2}\{(\alpha+4\beta)w^2-2 a \beta z^4-4\beta\}dz \label{eq:tPdiffs2}.
\end{align}
\end{lemma}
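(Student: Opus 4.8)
The plan is to differentiate the monomial $z^\alpha w^\beta$ directly and then use the defining equation $w^2 = z^8 + a z^4 + 1$ to eliminate either the explicit polynomial in $z$ (giving the second form) or the power $w^2$ (giving the first form). First I would compute
\[
d(z^\alpha w^\beta) = \alpha z^{\alpha-1} w^\beta\,dz + \beta z^\alpha w^{\beta-1} w'\,dz,
\]
so the whole task reduces to expressing $w'$ through $z$. Differentiating $w^2 = z^8 + a z^4 + 1$ gives $2 w w' = 8 z^7 + 4 a z^3$, hence $w w' = 4 z^7 + 2 a z^3$, and therefore
\[
\beta z^\alpha w^{\beta-1} w'\,dz = \beta z^\alpha w^{\beta-2}(w w')\,dz = \beta z^\alpha w^{\beta-2}(4 z^7 + 2 a z^3)\,dz.
\]

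Next I would collect terms over the common factor $z^{\alpha-1} w^{\beta-2}$. Writing $\alpha z^{\alpha-1} w^\beta = \alpha z^{\alpha-1} w^{\beta-2}\cdot w^2$ and combining with the displayed term yields
\[
d(z^\alpha w^\beta) = z^{\alpha-1} w^{\beta-2}\bigl\{\alpha w^2 + \beta(4 z^8 + 2 a z^4)\bigr\}\,dz,
\]
which, after replacing $w^2$ by $z^8 + a z^4 + 1$ inside the braces and collecting coefficients of $z^8$, $z^4$, and the constant term, gives
\[
d(z^\alpha w^\beta) = z^{\alpha-1} w^{\beta-2}\bigl\{(\alpha+4\beta) z^8 + a(\alpha+2\beta) z^4 + \alpha\bigr\}\,dz,
\]
which is \eqref{eq:tPdiffs1}. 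For \eqref{eq:tPdiffs2} I would instead keep the $\alpha w^2$ term untouched and rewrite only $4\beta z^8 + 2 a\beta z^4$: since $4\beta(z^8 + a z^4 + 1) = 4\beta w^2$, we get $4\beta z^8 + 2 a\beta z^4 = 4\beta w^2 - 2 a\beta z^4 - 4\beta$, so that the braces become $(\alpha+4\beta) w^2 - 2 a\beta z^4 - 4\beta$, which is exactly \eqref{eq:tPdiffs2}.

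There is essentially no obstacle here — this is a one-line exact-differential identity of the same type as Lemma~\ref{H-lemma1}, and the only thing to be careful about is bookkeeping of the coefficients when substituting $w^2 = z^8 + a z^4 + 1$. I would simply record it as a straightforward calculation, exactly as was done for Lemma~\ref{H-lemma1}. The two forms are then used in the sequel to reduce integrals of $z^k/w^3$ against an arbitrary $1$-cycle to a fixed finite set of periods (those of the holomorphic differentials $dz/w$, $z\,dz/w$, $z^2\,dz/w$ together with a few second-kind differentials), in direct analogy with Lemma~\ref{H-lemma2} for the H family.
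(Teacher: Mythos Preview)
Your argument is correct and is precisely the straightforward calculation the paper alludes to: differentiate $z^\alpha w^\beta$, use $2ww'=8z^7+4az^3$ from $w^2=z^8+az^4+1$, and then substitute $w^2$ one way for \eqref{eq:tPdiffs1} and the other way for \eqref{eq:tPdiffs2}. There is nothing to add; the paper simply records ``Straightforward calculation'' for this lemma.
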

\begin{proof}
Straightforward calculation. 
\end{proof}
\begin{lemma}\label{tP-lemma2}
For an arbitrary $1${\rm -cycle} $\gamma$, we have 
\begin{align}
\int_{\gamma}\dfrac{dz}{w^3}&=\dfrac{3}{4}\int_{\gamma}\dfrac{dz}{w}
-\dfrac{a}{2} \int_{\gamma}z^4\dfrac{dz}{w^3}, \label{eq:tPintegrals1} \\
\int_{\gamma}z\dfrac{dz}{w^3}&=\dfrac{1}{2}\int_{\gamma}z\dfrac{dz}{w}
-\dfrac{a}{2} \int_{\gamma}z^5\dfrac{dz}{w^3}, \label{eq:tPintegrals2} \\
\int_{\gamma}z^2\dfrac{dz}{w^3}&=\dfrac{1}{4}\int_{\gamma}z^2\dfrac{dz}{w}
-\dfrac{a}{2} \int_{\gamma}z^6\dfrac{dz}{w^3}, \label{eq:tPintegrals3} \\
\int_{\gamma}z^3\dfrac{dz}{w^3}&=\int_{\gamma}z^7\dfrac{dz}{w^3}=0, \label{eq:tPintegrals4} \\
\int_{\gamma}z^8\dfrac{dz}{w^3}&=\dfrac{1}{4}\int_{\gamma}\dfrac{dz}{w}
-\dfrac{a}{2}\int_{\gamma}z^4\dfrac{dz}{w^3}, \label{eq:tPintegrals5} \\
\int_{\gamma}z^9\dfrac{dz}{w^3}&=\int_{\gamma}z\dfrac{dz}{w^3}, \label{eq:tPintegrals6} \\
\int_{\gamma}z^{10}\dfrac{dz}{w^3}&=\dfrac{3}{4}\int_{\gamma}z^2,
\dfrac{dz}{w}-\dfrac{a}{2}\int_{\gamma}z^6\dfrac{dz}{w^3}.\label{eq:tPintegrals7} 
\end{align}
\end{lemma}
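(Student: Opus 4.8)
The plan is to follow the same pattern as the proof of Lemma~\ref{H-lemma2}: each of the seven formulas is obtained by integrating over $\gamma$ an exact differential $d(z^{\alpha}w^{\beta})$ furnished by Lemma~\ref{tP-lemma1}, so that the left-hand side vanishes, and then simplifying with the defining equation $z^{8}=w^{2}-az^{4}-1$. Multiplying this equation by $z^{k}w^{-3}$ and integrating produces, for every $k\ge 0$, the ``shift'' relation
\[
\int_{\gamma}\frac{z^{k+8}}{w^{3}}\,dz=\int_{\gamma}\frac{z^{k}}{w}\,dz-a\int_{\gamma}\frac{z^{k+4}}{w^{3}}\,dz-\int_{\gamma}\frac{z^{k}}{w^{3}}\,dz ,
\]
which I will use to turn relations among powers of $z$ modulo $w^{3}$ into the stated ones.

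For \eqref{eq:tPintegrals1}, \eqref{eq:tPintegrals2} and \eqref{eq:tPintegrals3} I substitute $(\alpha,\beta)=(1,-1)$, $(2,-1)$, $(3,-1)$ into form \eqref{eq:tPdiffs2}. Since the prefactor $w^{\beta-2}$ becomes $w^{-3}$ while the bracket carries the term $(\alpha+4\beta)w^{2}=(\alpha-4)w^{2}$, the product of these two factors equals $(\alpha-4)z^{\alpha-1}/w$; integrating the resulting exact differential gives
\[
0=(\alpha-4)\int_{\gamma}\frac{z^{\alpha-1}}{w}\,dz+2a\int_{\gamma}\frac{z^{\alpha+3}}{w^{3}}\,dz+4\int_{\gamma}\frac{z^{\alpha-1}}{w^{3}}\,dz ,
\]
and dividing by $4$ yields \eqref{eq:tPintegrals1}, \eqref{eq:tPintegrals2}, \eqref{eq:tPintegrals3} for $\alpha=1,2,3$ respectively. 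Then \eqref{eq:tPintegrals5}, \eqref{eq:tPintegrals6}, \eqref{eq:tPintegrals7} need no new input: apply the shift relation with $k=0,1,2$ and use \eqref{eq:tPintegrals1}, \eqref{eq:tPintegrals2}, \eqref{eq:tPintegrals3} to eliminate the leftover $\int_{\gamma}\frac{z^{k}}{w^{3}}\,dz$ term.

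The only step calling for a separate observation is \eqref{eq:tPintegrals4}. Plugging $(\alpha,\beta)=(0,-1)$ and $(\alpha,\beta)=(4,-1)$ into form \eqref{eq:tPdiffs1} and integrating gives the homogeneous pair
\[
2\int_{\gamma}\frac{z^{7}}{w^{3}}\,dz+a\int_{\gamma}\frac{z^{3}}{w^{3}}\,dz=0 ,\qquad a\int_{\gamma}\frac{z^{7}}{w^{3}}\,dz+2\int_{\gamma}\frac{z^{3}}{w^{3}}\,dz=0 ,
\]
whose coefficient determinant is $4-a^{2}$. Since $M$ is a smooth curve we have $a\ne\pm2$ (this is precisely the condition that $z^{8}+az^{4}+1$ be squarefree), so $4-a^{2}\ne 0$ and hence $\int_{\gamma}\frac{z^{3}}{w^{3}}\,dz=\int_{\gamma}\frac{z^{7}}{w^{3}}\,dz=0$. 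I expect this vanishing-via-a-$2\times2$-determinant to be the only genuinely non-mechanical point; everything else is bookkeeping with Lemma~\ref{tP-lemma1} identical to what was already carried out for the H family, the only care being to record which exact differential produces which relation.
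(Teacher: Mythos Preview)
Your proof is correct and follows essentially the same route as the paper: use \eqref{eq:tPdiffs2} with $(\alpha,\beta)=(1,-1),(2,-1),(3,-1)$ for \eqref{eq:tPintegrals1}--\eqref{eq:tPintegrals3}, use \eqref{eq:tPdiffs1} with $(\alpha,\beta)=(0,-1),(4,-1)$ for \eqref{eq:tPintegrals4}, and combine the defining relation $w^{2}=z^{8}+az^{4}+1$ with the earlier identities for \eqref{eq:tPintegrals5}--\eqref{eq:tPintegrals7}. The one place you go a bit beyond the paper is \eqref{eq:tPintegrals4}: the paper simply asserts the two substitutions yield the vanishing, whereas you correctly spell out that this requires the $2\times2$ system to be nonsingular, i.e.\ $4-a^{2}\neq 0$, which holds because $a\notin\{\pm 2\}$ on both the tP and tCLP ranges; conversely, for \eqref{eq:tPintegrals6} the paper's direct substitution $(\alpha,\beta)=(2,-1)$ into \eqref{eq:tPdiffs1} gives $\int_{\gamma}z^{9}w^{-3}\,dz=\int_{\gamma}z\,w^{-3}\,dz$ in one line, slightly cleaner than your shift-then-substitute.
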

\begin{proof}
Substituting $\alpha=1$, $\beta=-1$ to \eqref{eq:tPdiffs2} yields \eqref{eq:tPintegrals1}. 
Substituting $\alpha=2$, $\beta=-1$ to \eqref{eq:tPdiffs2} gives \eqref{eq:tPintegrals2}. 
Substituting $\alpha=3$, $\beta=-1$ \eqref{eq:tPdiffs2} implies \eqref{eq:tPintegrals3}. 
Substituting $(\alpha,\,\beta)=(0,\,-1),\,(4,\,-1)$ to \eqref{eq:tPdiffs1} yields \eqref{eq:tPintegrals4}. 
Substituting $\alpha=1$, $\beta=-1$ to \eqref{eq:tPdiffs1} and \eqref{eq:tPintegrals1} give \eqref{eq:tPintegrals5}. 
Substituting $\alpha=2$, $\beta=-1$ to \eqref{eq:tPdiffs1} implies \eqref{eq:tPintegrals6}. 
Combining the equation $w^2=z^8+a z^4+1$ and \eqref{eq:tPintegrals3} yields \eqref{eq:tPintegrals7}. 
\end{proof}
Next we consider the basis of the tangent space of the complex Lagrangian cone via 
the complex period matrix. First we consider deformations of 
the complex structure of $M$. 
We assume $a\in (2,\,\infty)$. Let $a_1,\,a_2,\,\dots,\,a_8$ be eight points in 
$\mathbb{C} \setminus \{0\}$: 
\[
w^2=z^8+a z^4+1=(z-a_1)(z-a_2)\cdots (z-a_8). 
\]
We now deform $M$ as follows: $w^2=(z-z_1)(z-z_2)\cdots (z-z_8)$. 
Then, for an arbitrary $1$-cycle $\gamma$, we find 
\begin{align*}
\dfrac{\partial}{\partial z_i}&\bigg|_{(z_1,\,\dots,\,z_8)=(a_1,\,\dots,\,a_8)}
\left( 
\int_{A_1}
\begin{pmatrix}
\dfrac{1-z^2}{w}dz  \\
\dfrac{i(1+z^2)}{w}dz  \\
\dfrac{2 z}{w}dz 
\end{pmatrix},\, 
\dots ,\, 
\int_{B_3}
\begin{pmatrix}
\dfrac{1-z^2}{w}dz  \\
\dfrac{i(1+z^2)}{w}dz  \\
\dfrac{2 z}{w}dz 
\end{pmatrix} 
\right) \\
&=
\dfrac{1}{2} \left(
\int_{A_1}
\begin{pmatrix}
\dfrac{1-z^2}{w (z-a_i)}dz  \\
\dfrac{i(1+z^2)}{w (z-a_i)}dz  \\
\dfrac{2 z}{w (z-a_i)}dz 
\end{pmatrix},\, \dots,\, 
\int_{B_3}
\begin{pmatrix}
\dfrac{1-z^2}{w (z-a_i)}dz  \\
\dfrac{i(1+z^2)}{w (z-a_i)}dz  \\
\dfrac{2 z}{w (z-a_i)}dz 
\end{pmatrix}
\right). 
\end{align*}
To describe 
\[
\int_{\gamma}\dfrac{1-z^2}{w (z-a_i)}dz, \quad \int_{\gamma}\dfrac{i(1+z^2)}{w (z-a_i)}dz, \quad 
\int_{\gamma}\dfrac{2 z}{w (z-a_i)}dz 
\]
via periods of the abelian differentials of the second kind, we set 
\begin{align*}
w^2&= (z-a_i)(z^7+\alpha_{i 6} z^6+\alpha_{i 5} z^5 +\alpha_{i 4} z^4+\alpha_{i 3} z^3+\alpha_{i 2} z^2+\alpha_{i 1} z+\alpha_{i 0}) \\
   &= z^8+a z^4+1, 
\end{align*}
that is, 
\begin{align*}
\alpha_{i 0} &= -\dfrac{1}{a_i},\>\> \alpha_{i 1}=-\dfrac{1}{a_i^2},\>\> \alpha_{i 2}=-\dfrac{1}{a_i^3},\>\>
\alpha_{i 3}=-\dfrac{1}{a_i^4},\\
\alpha_{i 4} &= a_i^3,\>\> \alpha_{i 5}=a_i^2,\>\> \alpha_{i 6}=a_i. 
\end{align*}
Thus, setting 
\begin{align*}
P_1 & = 
\begin{pmatrix}
1 & 0 & -1 \\
i & 0 & i \\
0 & 2 & 0
\end{pmatrix}, \quad 
P_2 = \begin{pmatrix}
\dfrac{1}{2} & -\dfrac{i}{2} & 0 & 0 & 0 & 0 \\
0 & 0 & \dfrac{1}{2} & 0 & 0 & 0 \\
-\dfrac{1}{2} & -\dfrac{i}{2} & 0 & 0 & 0 & 0 \\
0 & 0 & 0 & \dfrac{1}{2} & -\dfrac{i}{2} & 0 \\
0 & 0 & 0 & 0 & 0 & 1 \\
0 & 0 & 0 & -\dfrac{1}{2} & -\dfrac{i}{2} & 0 
\end{pmatrix}, 
\end{align*}
and 
\begin{align*}
P_{a_i} &= 
\begin{pmatrix}
-\dfrac{3}{4 a_i} & -\dfrac{1}{2 a_i^2} & -\dfrac{1}{4 a_i^3} & \dfrac{a}{2 a_i} + a_i^3 
& \dfrac{a}{2 a_i^2} + a_i^2  & \dfrac{a}{2 a_i^3} + a_i \\
\dfrac{1}{4} & -\dfrac{1}{2 a_i} & -\dfrac{1}{4 a_i^2} & -\dfrac{a}{2} - \dfrac{1}{a_i^4} & 
\dfrac{a}{2 a_i} + a_i^3 & \dfrac{a}{2 a_i^2} + a_i^2 \\
\dfrac{a_i}{4} & \dfrac{1}{2} & -\dfrac{1}{4 a_i} & -\dfrac{a}{2} a_i -\dfrac{1}{a_i^3}   & 
-\dfrac{a}{2} - \dfrac{1}{a_i^4}  & \dfrac{a}{2 a_i} + a_i^3  
\end{pmatrix}, 
\end{align*}
we find 
\[
\begin{pmatrix}
\int_{\gamma}\dfrac{1-z^2}{w (z-a_i)}dz      \\
\int_{\gamma}\dfrac{i(1+z^2)}{w (z-a_i)}dz   \\
\int_{\gamma}\dfrac{2 z}{w (z-a_i)}dz
\end{pmatrix}
=P_1 P_{a_i} P_2 
\begin{pmatrix}
\int_{\gamma}\dfrac{1-z^2}{w}  dz    \\
\int_{\gamma}\dfrac{i(1+z^2)}{w} dz     \\
\int_{\gamma}\dfrac{2 z}{w} dz     \\
\int_{\gamma}\dfrac{z^4-z^6}{w^3} dz     \\
\int_{\gamma}\dfrac{i(z^4+z^6)}{w^3} dz    \\
\int_{\gamma}\dfrac{z^5}{w^3} dz     
\end{pmatrix} 
\]
by Lemma~\ref{tP-lemma2}. 
Let $\Omega_{{\rm tP}}$ be the period matrix of the abelian differentials of the second 
kind (see \S~{\ref{tP-detail}}), that is, 
\[
\begin{pmatrix}
- i B   &   -A  &  i B  & -i B  &   -2 i B   &  -i B \\
A  &   i B  &  -A  &  i B  &  0  &  -i B \\
-i D   &   i D  &  -i D  &  C  &  0  &  C \\
- i F   &   -E  &  i F  &  -i F  & -2 i F  &   -i F \\ 
E  &  i F   &  -E  &  i F  & 0  &  -i F  \\ 
-i I  &   i I  &   -i I   &  H  &  0  &  H
\end{pmatrix}. 
\]
Set $\alpha := \sqrt{\frac{\sqrt{a+2}+\sqrt{a-2}}{2}}>1$. Then, choosing $a_1=e^{\frac{\pi}{4} i} \alpha$, 
$a_2=e^{\frac{3}{4}\pi i} \alpha$, 
$a_3=e^{-\frac{\pi}{4} i} \alpha$, 
$a_4=e^{-\frac{3}{4}\pi i} \alpha$, and 
$a_5=e^{\frac{\pi}{4} i} / \alpha$, 
we obtain five tangent vectors $\{T_i\}_{i=1}^5:= \left\{\frac{1}{2} P_1 P_{a_i} P_2 \Omega_{{\rm tP}}\right\}_{i=1}^5$ in $K_{3,\,6}$. 
Moreover, we shall consider the tangent vectors via an action of 
$SO(3,\,\mathbb{C}) \times (\mathbb{C} \setminus \{0\})$. 
Setting $C_1$ and $C_2$ are complex matrices of degree $3$ given by 
\[
(C_1,\,C_2)=
\begin{pmatrix}
- i B   &   -A  &  i B  & -i B  &   -2 i B   &  -i B \\
A  &   i B  &  -A  &  i B  &  0  &  -i B \\
-i D   &   i D  &  -i D  &  C  &  0  &  C 
\end{pmatrix}, 
\]
we have the Riemann matrix $\tau=C_1^{-1}C_2$ and 
the following four tangent vectors in $K_{3,\,6}$: 
\begin{align*}
&T_6:=(C_1,\,C_2),\quad 
T_7:=\begin{pmatrix}
0 & 1 & 0 \\
-1 & 0 & 0 \\
0 & 0 & 0
\end{pmatrix}
(C_1,\,C_2),
\end{align*}
\begin{align*}
&T_8:=\begin{pmatrix}
0 & 0 & 1 \\
0 & 0 & 0 \\
-1 & 0 & 0
\end{pmatrix}
(C_1,\,C_2),\quad 
T_9:=\begin{pmatrix}
0 & 0 & 0 \\
0 & 0 & 1 \\
0 & -1 & 0
\end{pmatrix}
(C_1,\,C_2). 
\end{align*}

\subsection{tCLP family} 

Since $M$ is defined by the same equation as tP family, we can take the same $P_1$, $P_{a_i}$, and $P_2$ 
as in \S~\ref{tP-section} for $a\in (-2,\,2)$. We now assume $a\in [0,\,2)$ because every minimal surface for $a\in (-2,\,0]$ 
can be transformed to its conjugate surface for $-a\in [0,\,2)$. 
Let $\Omega_{{\rm tCLP}}$ be the period matrix of the abelian differentials of the second kind (see \S~{\ref{tCLP-detail}}), that is, 
\[
\begin{pmatrix}
- i B   &   i B  &  i B  & 0  &   -A   &  -A \\
- i B  &   -i B  &  i B  &  A  &  A  &  0 \\
- C   &   C  &  - C  &  - i D  &  0  &  -i D \\
- i F   &   i F  &  i F  &  0  & E  &   E \\ 
- i F  &  -i F   &  i F  &  -E  & -E  &  0  \\ 
- H &   H  &   -H  &  i I  &  0  &  i I
\end{pmatrix}. 
\]
Set $e^{i\alpha} := -\frac{a}{2}+i \frac{\sqrt{4-a^2}}{2}\in S^1\subset \mathbb{C}$ ($\alpha \in [\pi/2,\,\pi) $). 
Choosing $a_1=e^{\frac{\alpha}{4} i}$, $a_2=i e^{\frac{\alpha}{4} i}$, $a_3=-e^{\frac{\alpha}{4} i}$, 
$a_4=-i e^{\frac{\alpha}{4} i}$, and $a_5=e^{-\frac{\alpha}{4} i}$, 
we obtain five tangent vectors $\{T_i\}_{i=1}^5:= \left\{\frac{1}{2} P_1 P_{a_i} P_2 \Omega_{{\rm tCLP}}\right\}_{i=1}^5$ in $K_{3,\,6}$. 
Moreover, setting $C_1$ and $C_2$ are complex matrices of degree $3$ given by 
\[
(C_1,\,C_2)=
\begin{pmatrix}
- i B   &   i B  &  i B  & 0  &   -A   &  -A \\
- i B  &   -i B  &  i B  &  A  &  A  &  0 \\
- C   &   C  &  - C  &  - i D  &  0  &  -i D
\end{pmatrix}, 
\]
we have the Riemann matrix $\tau=C_1^{-1} C_2$ and 
the following four tangent vectors in $K_{3,\,6}$: 
\begin{align*}
&T_6:=(C_1,\,C_2),\quad 
T_7:=\begin{pmatrix}
0 & 1 & 0 \\
-1 & 0 & 0 \\
0 & 0 & 0
\end{pmatrix}
(C_1,\,C_2), \\
&T_8:=\begin{pmatrix}
0 & 0 & 1 \\
0 & 0 & 0 \\
-1 & 0 & 0
\end{pmatrix}
(C_1,\,C_2),\quad 
T_9:=\begin{pmatrix}
0 & 0 & 0 \\
0 & 0 & 1 \\
0 & -1 & 0
\end{pmatrix}
(C_1,\,C_2). 
\end{align*}

\section{Morse indices and nullities}

In this section, we will show our main results. 
Every family of minimal surfaces is parametrized by $a$ which belongs to a 
suitable interval. 
First, we find $\{a\>|\> \det W=0\}$. By these points, each domain interval 
can be divided into some intervals. 
After that, we shall consider the negative eigenvalues of $W_2-W_1$ on each 
divided interval. Our main results follow from the procedure. 
\begin{remark}\label{rem-introduction} 
A minimal surface with $index_E=0$ must be the totally geodesic subtorus. 
Thus every minimal surface in this paper satisfies $index_E \geq 1$. 
Suppose that a family of minimal surfaces parametrized by $a\in (\alpha,\,\beta)$ 
satisfies $index_E = 1$. Then, 
by a fundamental eigenvalues argument, we have $index_E = 1$ for $a=\alpha,\,\beta$. 
In fact, let 
\[
\underbrace{\lambda_1,\,\cdots,\,\lambda_k}_{<0},\, 0,\,\cdots,\, 0,\, 
\underbrace{\lambda_{k+1},\,\cdots}_{>0} 
\]
be eigenvalues of $E_{\phi}$ at $a=\alpha$. In a neighborhood of $a=\alpha$, we have 
$index_E \geq k$. If $index_E = 1$ on $(\alpha,\,\beta)$, then $k\leq 1 $, that is, 
$k=1$ holds. So is the case $a=\beta$. 
\end{remark}
Since each $T_j$, $W$, $W_j$ are too complicated, we use numerical arguments by 
Mathematica. 

\subsection{H family}

The curve $\det W$ may meet the real axis at two points $a_1$, $a_2$ 
(see Figure~\ref{H-det}). 
Thus we consider three intervals $(0,\,a_1)$, $(a_1,\,a_2)$, $(a_2,\,1)$. 
\begin{figure}[htbp] \label{H-det}
%%%%%%%%%%%%%%%%%%%%%%%%%%%%%%%%%%%%%%%%%%%%%%%%%%%
\begin{center}
\begin{tabular}{cc}
 \includegraphics[width=.4\linewidth]{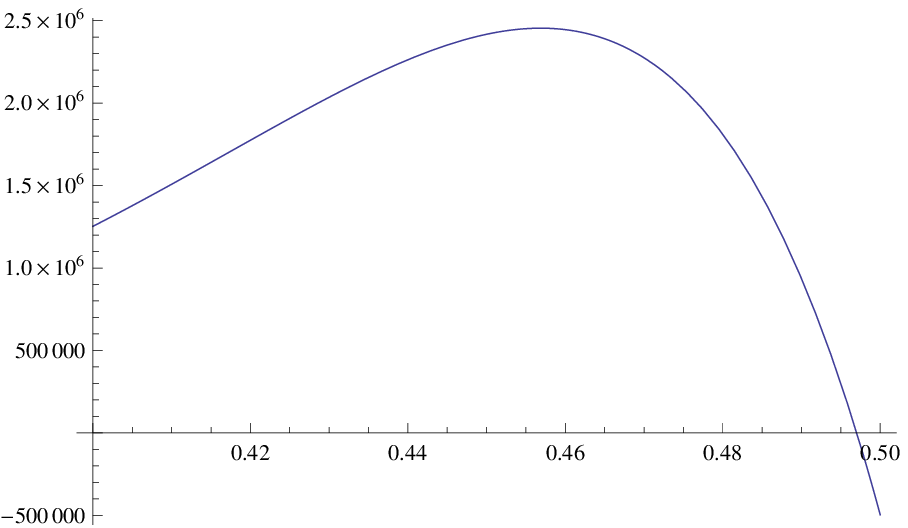} &
 \includegraphics[width=.5\linewidth]{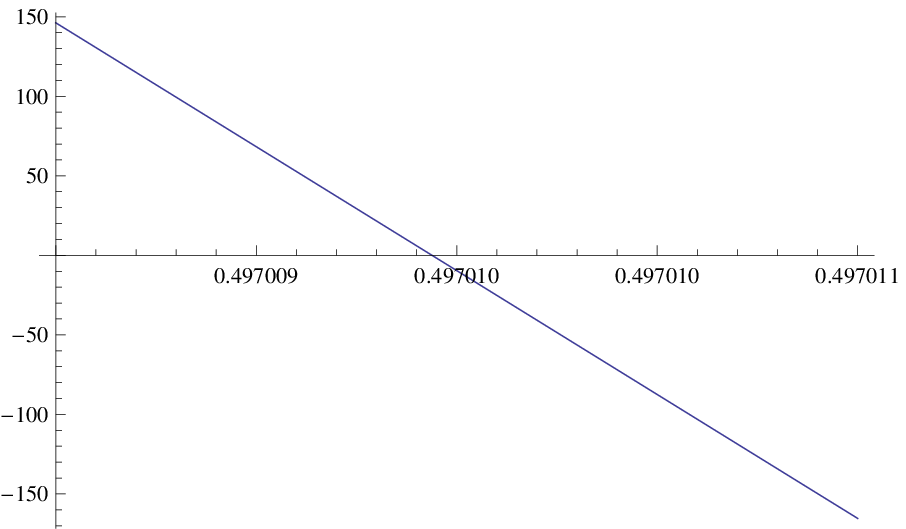} \\
 \includegraphics[width=.4\linewidth]{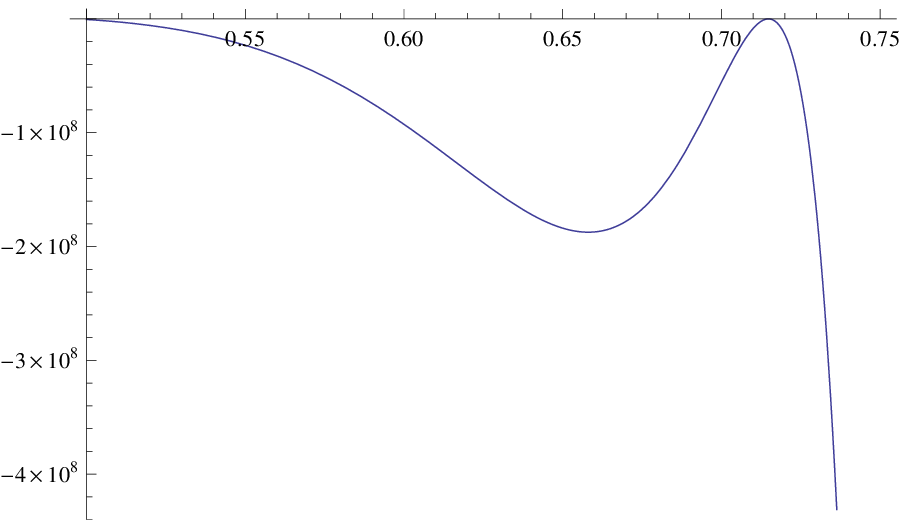} &
 \includegraphics[width=.5\linewidth]{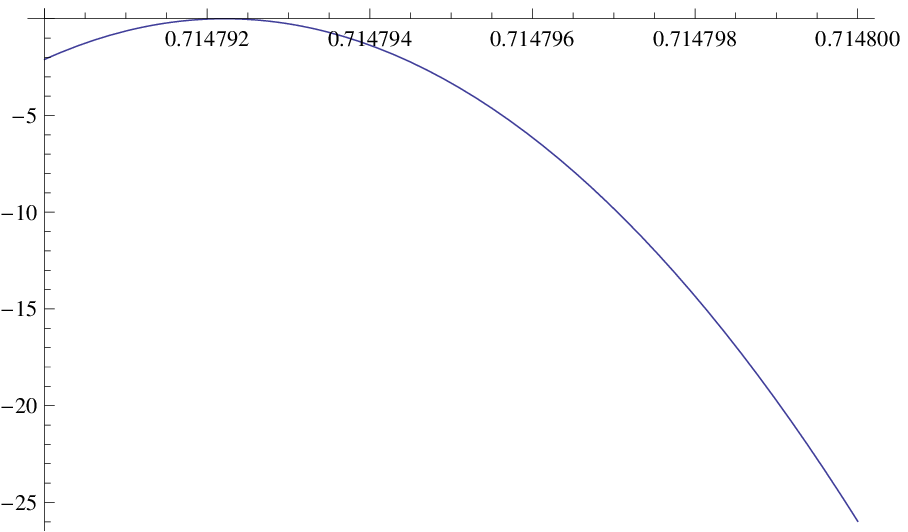} \\
\end{tabular}
\caption{ $\det W$ (the real axis is defined by $a$)}
\end{center}
\end{figure} %%%%%%%%%%%%%%%%%%%%%%%%%%%%%%%%%%%%%%%%%%%%%%%%%%%%%%%%%%%
Sets of eigenvalues of $W$ for $a= 0.49700993839805,\, 0.714792215373045$ 
are given by 
\begin{align*}
&\{ 160.732, 92.2764, 83.1167, 71.3028, -4.65805, -3.62774, -0.180182, \\
&\qquad -0.0540145, 0 \}, \\
& \{ 231.28, 147.299, 131.781, 125.929, -7.19325, -6.91484, -5.16664, \\
&\qquad  0, 0 \}, 
\end{align*}
respectively. Hence $a_1 \approx 0.49701$ and $a_2 \approx 0.71479$. 

Substituting $a=0.3,\,0.5,\,0.8$ to $W$, we obtain sets of the eigenvalues as follows: 
\begin{align*}
&\{ 144.683, 53.6184, 39.0519, 34.6726, -7.31714, -2.4839, 1.01569, \\
&\qquad -0.0461945, -0.00828724 \}, \\
&\{ 161.373, 92.8348, 83.8803, 71.9439, -4.64655, -3.64618, -0.18237, \\
& \qquad -0.0647386, -0.0274727 \}, \\
&\{ 275.447, 181.897, 155.676, 149.321, -14.6043, -12.4717, -8.58915, \\
& \qquad 0.405355, 0.108547 \}.
\end{align*}
Also, substituting $a=0.3,\,0.5,\,0.8$ to $W_2-W_1$, we have the following sets of the 
eigenvalues: 
\begin{align*}
& \{17.7972, 17.7971, 11.0343, 11.0248, 5.34828, -1.75067, 0.119967, 
0.085154, \\
&\qquad 0.0207832, 0.014763, 0, 0, 0, 0, 0, 0, 0, 0\}, \\
& \{14.8116, 14.7502, 10.5123, 9.83969, 9.68212, 0.819204, 0.324313, 
0.190585, \\
&\qquad 0.0728135, 0.050784, 0, 0, 0, 0, 0, 0, 0, 0\}, \\
& \{44.9919, 38.0399, 36.0209, 29.0891, 24.1326, 20.2568, 3.46409, 
1.12237, \\
& \qquad -0.384964, -0.130142, 0, 0, 0, 0, 0, 0, 0, 0\}. 
\end{align*}
Therefore, we conclude that 
\[
\begin{cases}
(p,\, q)=(5,\,4), \>\>\> index_E=2,\>\>\> nullity_E=0 & (a\in (0,\,a_1)) \\
(p,\, q)=(4,\,4), \>\>\> index_E=1,\>\>\> nullity_E=1 & (a = a_1) \\
(p,\,q)=(4,\,5), \>\>\> index_E=1, \>\>\> nullity_E=0 & (a\in (a_1,\,a_2)) \\
(p,\,q)=(4,\,3), \>\>\> index_E=1, \>\>\> nullity_E = 2 & (a = a_2) \\
(p,\,q)=(6,\,3), \>\>\> index_E=3, \>\>\> nullity_E = 0 & (a\in (a_2,\,1))
\end{cases}
\]

\subsection{rPD family}

The curve $\det W$ may meet the real axis at a point $a_1$ (see Figure~\ref{rPD-det}). 
Thus we consider two intervals $(0,\,a_1)$, $(a_1,\,1]$. 
\begin{figure}[htbp]\label{rPD-det} 
%%%%%%%%%%%%%%%%%%%%%%%%%%%%%%%%%%%%%%%%%%%%%%%%%%%
\begin{center}
\begin{tabular}{cc}
 \includegraphics[width=.4\linewidth]{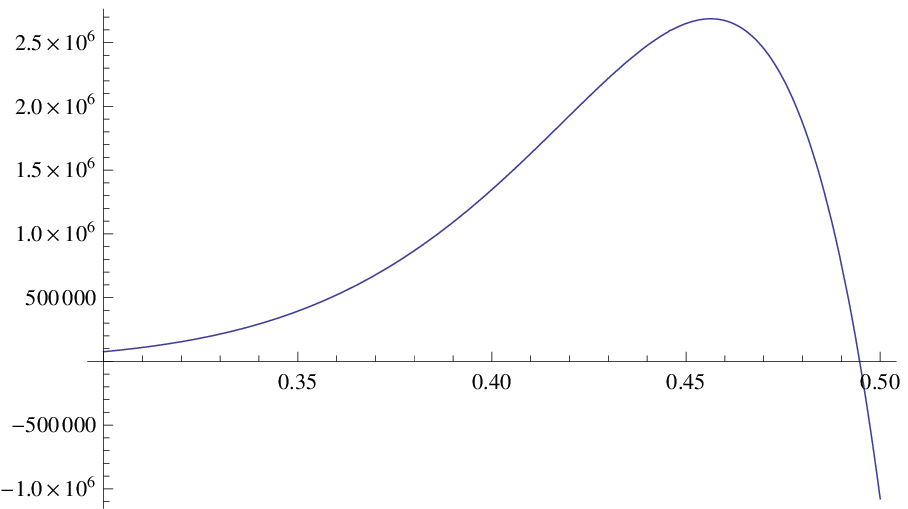} &
 \includegraphics[width=.5\linewidth]{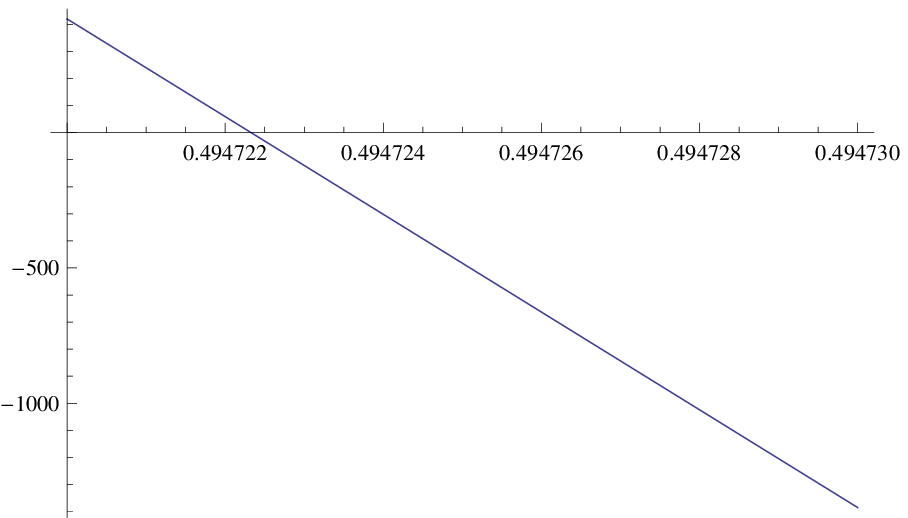} \\
 \includegraphics[width=.4\linewidth]{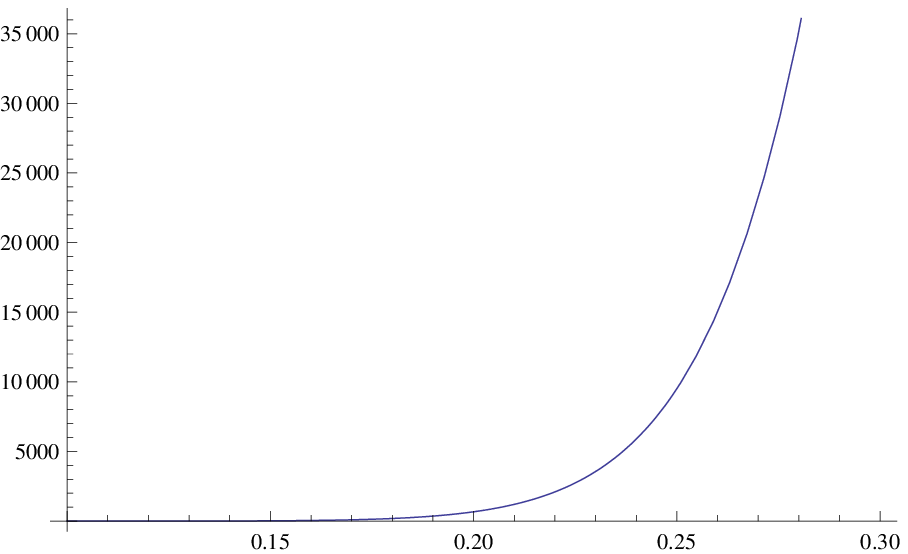} &
 \includegraphics[width=.4\linewidth]{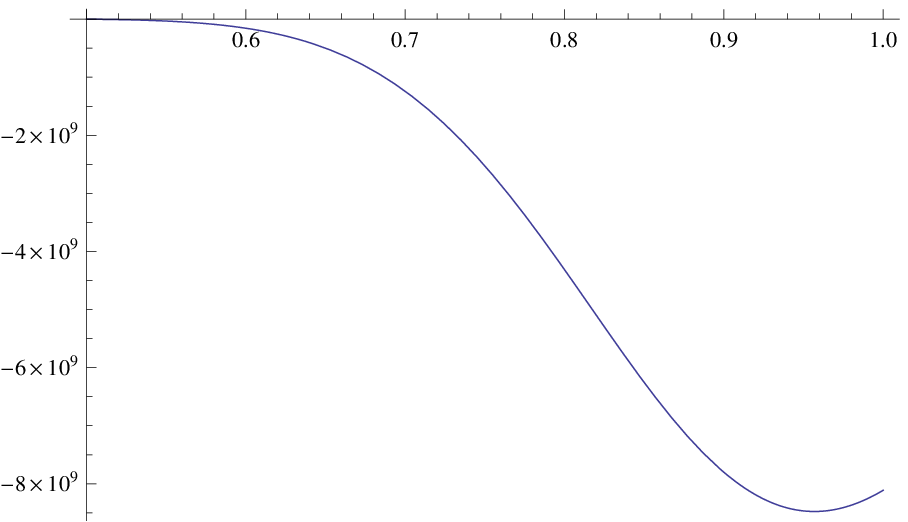} \\
\end{tabular}
\caption{ $\det W$ (the real axis is defined by $a$)}
\end{center}
\end{figure} %%%%%%%%%%%%%%%%%%%%%%%%%%%%%%%%%%%%%%%%%%%%%%%%%%%%%%%%%%%
A set of eigenvalues of $W$ for $a= 0.494722327827355$ is given by 
\begin{align*}
&\{ 160.726, 90.8344, 81.34, 70.8198, -4.32264, -3.35846, -0.218914, \\ 
&\qquad -0.0654375, 0 \}. 
\end{align*}
Hence $a_1 \approx 0.494722$. 

Substituting $a=0.3,\,0.5$ to $W$, we obtain sets of the eigenvalues as follows: 
\begin{align*}
&\{144.749, 53.5907, 39.027, 34.7134, -7.29118, -2.47246, 1.01161, \\
& \qquad -0.0471324, -0.00845948\}, \\
&\{161.851, 91.747, 82.5866, 71.9293, -4.28067, -3.37296, -0.226271, \\
& \qquad -0.0900042, -0.041516\}. 
\end{align*}
Also, substituting $a=0.3,\,0.5$ to $W_2-W_1$, we have the following sets of the 
eigenvalues: 
\begin{align*}
& \{17.7528, 17.752, 11.0056, 11.0005, 5.33466, -1.74533, 0.117449, 
0.0869349, \\
&\qquad 0.0207703, 0.01538, 0, 0, 0, 0, 0, 0, 0, 0\}, \\
& \{14.0747, 13.9953, 9.92996, 9.22538, 9.22362, 0.719009, 0.359877, 
0.214733, \\
&\qquad 0.100869, 0.0860548, 0, 0, 0, 0, 0, 0, 0, 0\}. 
\end{align*}
Therefore, we conclude that 
\[
\begin{cases}
(p,\,q)=(5,\,4), \>\>\> index_E=2, \>\>\> nullity_E = 0 & (a\in (0,\,a_1)) \\
(p,\,q)=(4,\,4), \>\>\> index_E=1, \>\>\> nullity_E = 1 & (a = a_1) \\
(p,\,q)=(4,\,5), \>\>\> index_E=1, \>\>\> nullity_E = 0 & (a\in (a_1,\,1]) 
\end{cases}
\]

\subsection{tP family, tD family}

The curve $\det W$ may meet the real axis at two points $a_1$, $a_2$ 
(see Figure~\ref{tP-det}). 
Thus we consider three intervals $(2,\,a_1)$, $(a_1,\,a_2)$, $(a_2,\,\infty)$. 
\begin{figure}[htbp]\label{tP-det} 
%%%%%%%%%%%%%%%%%%%%%%%%%%%%%%%%%%%%%%%%%%%%%%%%%%%
\begin{center}
\begin{tabular}{cc}
 \includegraphics[width=.4\linewidth]{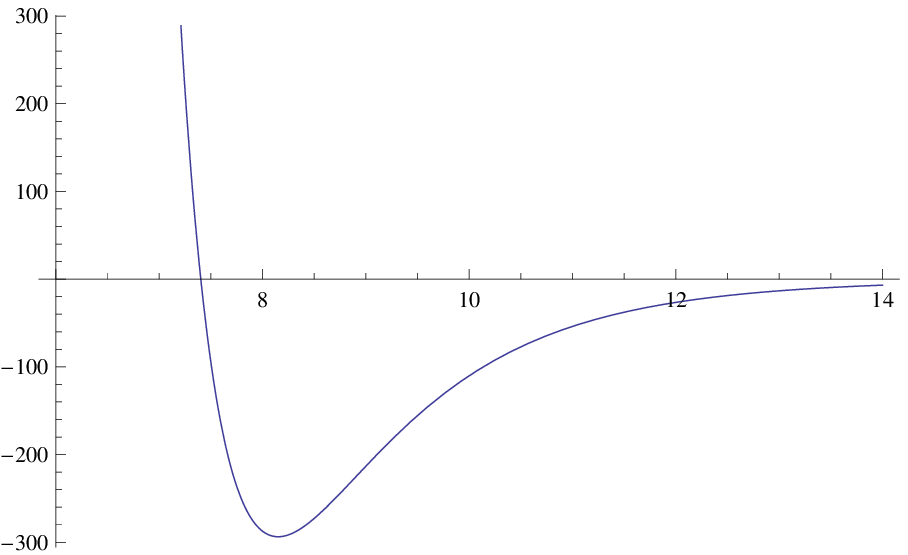} &
 \includegraphics[width=.5\linewidth]{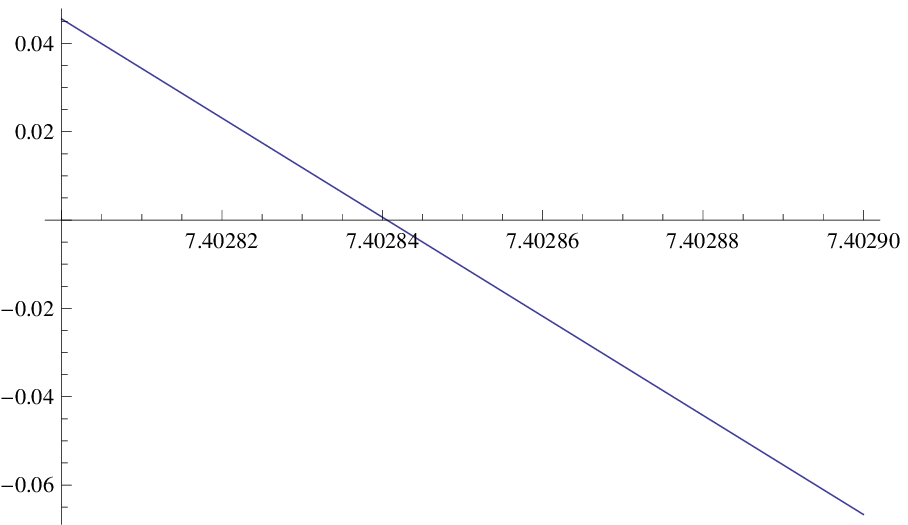} \\
 \includegraphics[width=.4\linewidth]{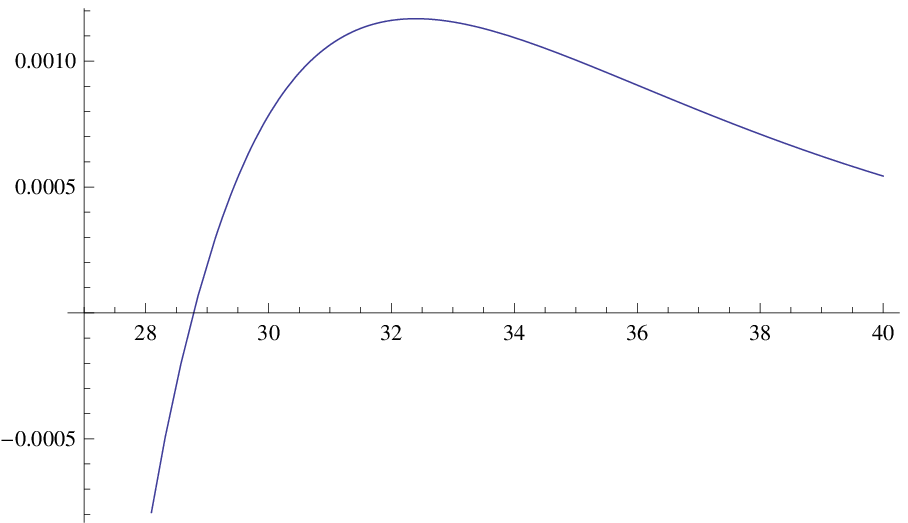} &
 \includegraphics[width=.5\linewidth]{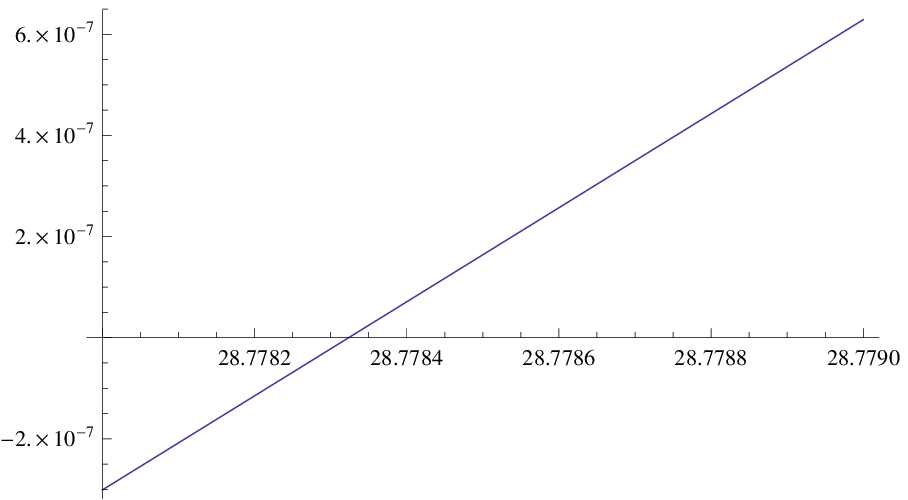} 
\end{tabular}
\caption{ $\det W$ (the real axis is defined by $a$)}
\end{center}
\end{figure} %%%%%%%%%%%%%%%%%%%%%%%%%%%%%%%%%%%%%%%%%%%%%%%%%%%%%%%%%%%
Sets of eigenvalues of $W$ for $a= 7.4028405832965,\, 28.7783236867029$ 
are given by 
\begin{align*}
&\{ 80.9577, 48.893, 45.7276, 40.9177, -1.74202, -0.459907, -0.182299, \\
&\qquad -0.124155, 0 \}, \\
& \{ 34.5772, 22.6075, 19.4385, 18.7545, -0.858857, -0.0828255, -0.0605294, \\
&\qquad -0.00120275, 0 \}, 
\end{align*}
respectively. Hence $a_1\approx 7.40284$ and $a_2\approx 28.7783$. 

Substituting $a=7,\,14,\,30$ to $W$, we obtain sets of the eigenvalues as follows: 
\begin{align*}
&\{84.0549, 50.6269, 47.5056, 42.1118, -1.82758, -0.502372, -0.195009, \\
&\qquad -0.131043, 0.00417493\}, \\
&\{53.6301, 33.4006, 29.9484, 29.3214, -1.15324, -0.175836, -0.133389, \\
& \qquad -0.0334135, -0.00481829\}, \\
&\{33.7419, 22.1296, 18.9583, 18.2612, -0.8469, -0.080107, -0.056963, \\
& \qquad -0.00109614, 0.000716405\}.
\end{align*}
Also, substituting $a=7,\,14,\,30$ to $W_2-W_1$, we have the following sets of the 
eigenvalues: 
\begin{align*}
& \{6.0379, 5.37072, 1.57278, 1.28479, 1.17983, 0.510969, 0.329481, \
0.207524, \\
&\qquad 0.0869196, -0.00442925, 0, 0, 0, 0, 0, 0, 0, 0\}, \\
& \{4.01549, 3.82079, 0.660116, 0.450696, 0.404925, 0.209065, 0.146808, \
0.0499664, \\
&\qquad 0.0163956, 0.00721722, 0, 0, 0, 0, 0, 0, 0, 0\}, \\
& \{3.06952, 3.02215, 0.269261, 0.151882, 0.13806, 0.116333, 0.0594526, \
0.00268534, \\
&\qquad 0.00188477, -0.000794133, 0, 0, 0, 0, 0, 0, 0, 0\}. 
\end{align*}
Therefore, we conclude that 
\[
\begin{cases}
(p,\,q)=(5,\,4), \>\>\> index_E=2, \>\>\> nullity_E = 0 & (a\in (2,\,a_1)) \\
(p,\,q)=(4,\,5), \>\>\> index_E=1, \>\>\> nullity_E=0 & (a\in (a_1,\,a_2)) \\
(p,\,q)=(5,\,4), \>\>\> index_E=2, \>\>\> nullity_E = 0 & (a\in (a_2,\, \infty )) \\
(p,\,q)=(4,\,4), \>\>\> index_E=1, \>\>\> nullity_E = 1 & (a = a_1,\,a_2) 
\end{cases}
\]

\subsection{tCLP family}

\begin{figure}[htbp]\label{tCLP-det} 
%%%%%%%%%%%%%%%%%%%%%%%%%%%%%%%%%%%%%%%%%%%%%%%%%%%
\begin{center}
\begin{tabular}{cc}
 \includegraphics[width=.45\linewidth]{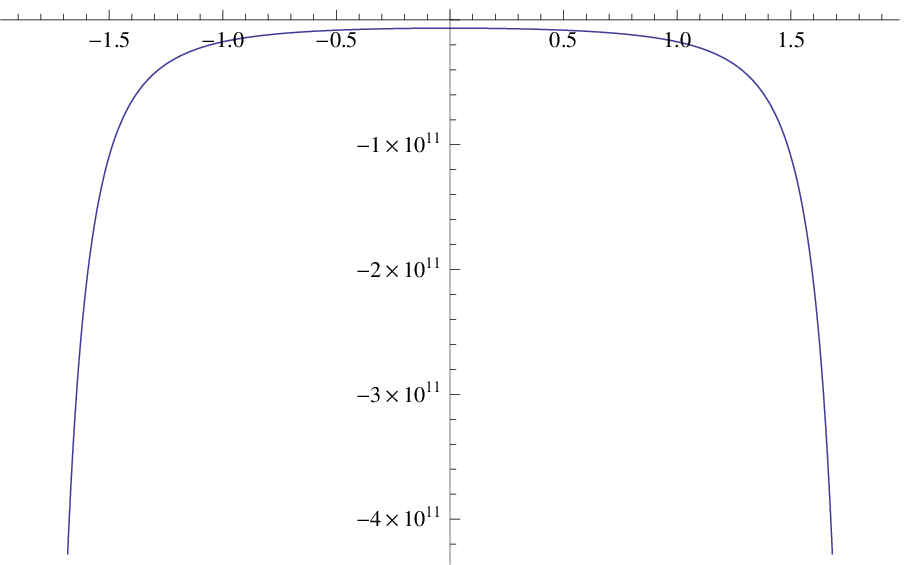} &
 \includegraphics[width=.45\linewidth]{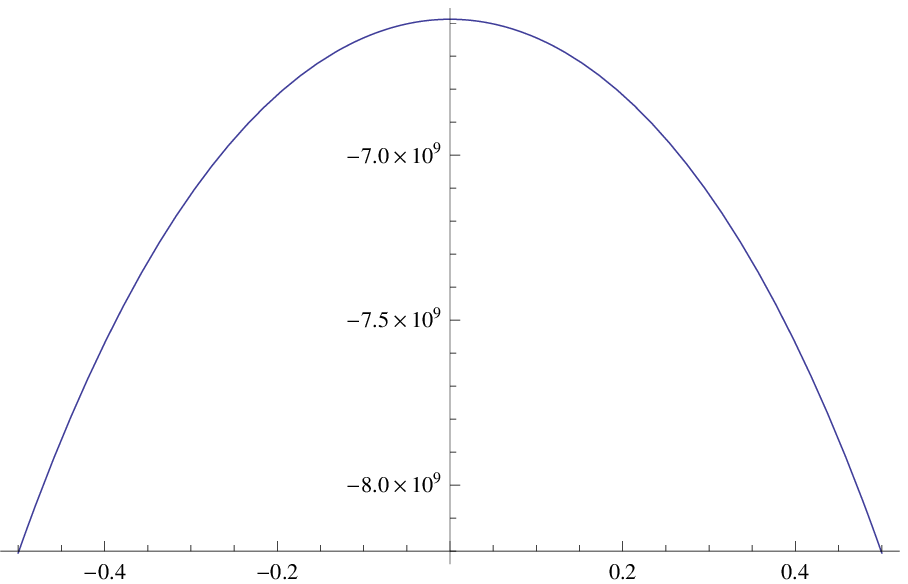} \\
\end{tabular}
\caption{ $\det W$ (the real axis is defined by $a$)}
\end{center}
\end{figure} %%%%%%%%%%%%%%%%%%%%%%%%%%%%%%%%%%%%%%%%%%%%%%%%%%%%%%%%%%%

The curve $\det W$ may not meet the real axis (see Figure~\ref{tCLP-det}). 
Substituting $a=0$ to $W$, we obtain a set of the eigenvalues as follows: 
\begin{align*}
&\{ 169.074, 106.977, 101.864, 74.8337, -7.06191, -3.56035, -3.24383, \\
&\qquad 2.28679, 0.25619\}. 
\end{align*}
Also, substituting $a=0$ to $W_2-W_1$, we have the following set of the 
eigenvalues: 
\begin{align*}
& \{ 15.791, 15.791, 8.75673, 8.75673, 7.27068, 7.27068, 5.06466, 5.06466, \\
&\qquad -0.529658, -0.529658, 0, 0, 0, 0, 0, 0, 0, 0 \}. 
\end{align*}
Therefore, we conclude that 
\[
(p,\,q)=(6,\,3), \>\>\> index_E=3, \>\>\> nullity_E = 0
\]
for $a\in (-2,\,2)$. 

\section{Appendix (canonical homology bases and period matrices)}

We shall give a canonical homology basis and calculate periods of the abelian 
differentials of the second kind on each Riemann surface defined in \S 1.

\subsection{H family}\label{H-detail}

\subsubsection{Canonical homology basis}\label{H-canonical}

Let $M$ be a hyperelliptic Riemann surface of genus $3$ defined as the completion of 
$\{(z,\,w)\,|\, w^2=z(z^3-a^3) \left( z^3-\frac{1}{a^3} \right)\} \subset \mathbb{C}^2$ for $a\in (0,\,1)$. 
The three differentials 
\[\dfrac{dz}{w},\,z\dfrac{dz}{w},\,z^2\dfrac{dz}{w}\]
form a basis for the abelian differentials of the first kind, that is, holomorphic differentials (see. p.255 in \cite{Grif-Har}). 
Up to exact forms, the abelian differentials of the second kind are given by the following six differentials 
(see p.460 in \cite{Grif-Har}): 
\[\dfrac{dz}{w},\,z\dfrac{dz}{w},\,z^2\dfrac{dz}{w},\,\dfrac{z^4}{w^3}dz,\,\dfrac{z^5}{w^3}dz,\,\dfrac{z^6}{w^3}dz.\]
In fact, we have the following divisors: 
\begin{align*}
\left(\dfrac{z^4}{w^3}dz\right)&=6(0,\,0)-2(a,\,0)-2(e^{\frac{2}{3}\pi i}a,\,0)-2(e^{-\frac{2}{3}\pi i}a,\,0) \\
&\quad -2(1/a,\,0)-2(e^{\frac{2}{3}\pi i}/a,\,0)-2(e^{-\frac{2}{3}\pi i}/a,\,0)+10(\infty,\,\infty), \\
\left(\dfrac{z^5}{w^3}dz\right)&=8(0,\,0)-2(a,\,0)-2(e^{\frac{2}{3}\pi i}a,\,0)-2(e^{-\frac{2}{3}\pi i}a,\,0) \\
&\quad -2(1/a,\,0)-2(e^{\frac{2}{3}\pi i}/a,\,0)-2(e^{-\frac{2}{3}\pi i}/a,\,0)+8(\infty,\,\infty), \\
\left(\dfrac{z^6}{w^3}dz\right)&=10(0,\,0)-2(a,\,0)-2(e^{\frac{2}{3}\pi i}a,\,0)-2(e^{-\frac{2}{3}\pi i}a,\,0) \\
&\quad -2(1/a,\,0)-2(e^{\frac{2}{3}\pi i}/a,\,0)-2(e^{-\frac{2}{3}\pi i}/a,\,0)+6(\infty,\,\infty).
\end{align*}
Thus, the above three differentials are meromorphic. We now see that they have no residues. 
To do this, we only consider residues at the pole 
$(a,\,0)$ because others can be showed in the same way. 
The implicit function theorem assures us that $w$ is a holomorphic coordinate around poles. 
We can write 
\[z=a+a_2 w^2+a_4 w^4+\cdots\qquad \left(a_2\neq 0 \right)\]
around $(a,\,0)$ and find 
\[\dfrac{z^j}{w^3}dz=\dfrac{(a+a_2 w^2+a_4 w^4+\cdots)^j(2 a_2 +4 a_4 w^2+\cdots)}{w^2}dw\]
for $j=4,\,5,\,6$. So they have no residues at $(a,\,0)$. 

Next, let 
\[G=i \left(\dfrac{1-z^2}{w},\,\dfrac{i(1+z^2)}{w},\,\dfrac{2z}{w},\,
\dfrac{z^4-z^6}{w^3},\,\dfrac{i(z^4+z^6)}{w^3},\,\dfrac{z^5}{w^3}\right)^t dz \]
and consider the biholomorphisms 
\begin{align*}
j(z,\,w)&=(z,\,-w),\>\>\varphi(z,\,w)=(e^{\frac{2\pi}{3}i}z,\,e^{\frac{\pi}{3}i}w)
\end{align*}
on $M$. Then it is straightforward to compute that 
\begin{align*}
j^{*}G=-G, \quad 
\varphi^{*}G & =
\begin{pmatrix}
\dfrac{1}{2} & \dfrac{\sqrt{3}}{2}  & 0 & 0 & 0 & 0 \\
-\dfrac{\sqrt{3}}{2} & \dfrac{1}{2}  & 0 & 0 & 0 & 0 \\
0 & 0 & -1 & 0 & 0 & 0 \\
0 & 0 & 0 & \dfrac{1}{2} & \dfrac{\sqrt{3}}{2}  & 0 \\
0 & 0 & 0 & -\dfrac{\sqrt{3}}{2} & \dfrac{1}{2}  & 0 \\
0 & 0 & 0 & 0 & 0 & -1 
\end{pmatrix}G.  
\end{align*}
Now we determine a canonical homology basis on $M$. Recall that 
\begin{align*}
\pi_{{\rm H}}:&\quad M \longrightarrow \overline{\mathbb{C}}:=\mathbb{C} \cup \{\infty\} \\
& (z,\,w) \> \longmapsto \> z
\end{align*}
defines a two-sheeted branched covering having branch locus
\[
\{ (0,\,0),\>(\infty,\,\infty),\> (a,\,0),\> (e^{\pm \frac{2}{3}\pi i}a,\,0),\>  
(1/a,\,0),\> (e^{\pm \frac{2}{3}\pi i}/a,\,0) \}
\]
and $j$ is its deck transformation. 
So $M$ can be expressed as a $2$-sheeted branched cover of $\overline{\mathbb{C}}$ in the following way. \\

\begin{picture}(340,150)

\put(10,90){\line(1,0){140}} \put(170,90){\line(1,0){140}}
\put(70,30){\line(0,1){120}} \put(230,30){\line(0,1){120}}

\put(70,90){\circle*{2}} \put(90,90){\circle*{2}} \put(120,90){\circle*{2}} 
\put(57,108){\circle*{2}} \put(40,131){\circle*{2}} \put(57,72){\circle*{2}} \put(40,49){\circle*{2}} 

\put(230,90){\circle*{2}} \put(250,90){\circle*{2}} \put(280,90){\circle*{2}} 
\put(217,108){\circle*{2}} \put(200,131){\circle*{2}} \put(217,72){\circle*{2}} \put(200,49){\circle*{2}} 
 
\put(60,80){$O$} \put(88,96){$a$} \put(110,96){$1/a$} \put(60,112){$e^{\frac{2}{3} \pi i} a$} \put(5,130){$e^{\frac{2}{3} \pi i} /a$}
\put(20,70){$e^{-\frac{2}{3} \pi i} a$} \put(0,50){$e^{-\frac{2}{3} \pi i} /a$}

\put(70,90){\line(-13,18){13}} \put(40,49){\line(-17,-23){12}}
\put(230,90){\line(-13,18){13}} \put(200,49){\line(-17,-23){12}}
\qbezier(40,131)(30,90)(57,72) \qbezier(200,131)(190,90)(217,72)

\put(75,94){$+$} \put(75,82){$-$} \put(130,94){$+$} \put(130,82){$-$} \put(50,120){$+$} \put(41,111){$-$} 
\put(40,60){$+$} \put(48,52){$-$} 
\put(235,94){$+$} \put(235,82){$-$} \put(290,94){$+$} \put(290,82){$-$} \put(210,120){$+$} \put(201,111){$-$} 
\put(200,60){$+$} \put(208,52){$-$} 

\put(120,140){{\rm (i) }} \put(280,140){{\rm (ii) }}

\put(143,96){$(\infty)$} \put(303,96){$(\infty)$} \put(15,23){$(\infty)$} \put(175,23){$(\infty)$} 

\put(140,5){{\rm \textbf{figure (H)}}}

\thicklines

\put(70,90){\line(1,0){20}} \put(120,90){\line(1,0){30}} \put(57,108){\line(-17,23){17}} \put(57,72){\line(-17,-23){17}} 
\put(230,90){\line(1,0){20}} \put(280,90){\line(1,0){30}} \put(217,108){\line(-17,23){17}} \put(217,72){\line(-17,-23){17}} 
 
\end{picture}
$\,$\\
We prepare two copies of $\overline{\mathbb{C}}$ and slit them along the thick lines in {\rm figure (H)}. 
Identifying each of the upper (resp. lower) edges of the thick lines in {\rm (i)} with 
each of the lower (resp. upper) edges of the thick lines in (ii), we obtain the hyperelliptic Riemann surface $M$ of genus $3$ 
(see the following figure). 
Note that each of thin lines joining two branch points in {\rm figure (H)} is corresponding to each of thick lines 
joining two branch points in the following figure. 

\begin{picture}(340,160)

\put(135,150){$(\infty,\infty)$} \put(133,122){$(1/a,\,0)$} \put(138,105){$(a,\,0)$} 
\put(140,82){$(0,\,0)$} \put(130,65){$(e^{\frac{2}{3}\pi i} a ,\,0)$} \put(130,40){$(e^{\frac{2}{3}\pi i}/a,\,0)$}
\put(130,24){$(e^{-\frac{2}{3} \pi i} a ,\,0)$} \put(128,0){$(e^{-\frac{2}{3} \pi i}/a,\,0)$} 

\put(122,134){$-$} \put(122,93){$-$} \put(122,53){$-$} \put(122,13){$-$} 
\put(212,133){$-$} \put(212,93){$-$} \put(212,53){$-$} \put(212,13){$-$} 

\put(90,133){$+$} \put(90,93){$+$} \put(90,53){$+$} \put(90,13){$+$} 
\put(180,133){$+$} \put(180,93){$+$} \put(180,53){$+$} \put(180,13){$+$} 

\put(110,150){\circle*{2}} \put(110,120){\circle*{2}} \put(110,110){\circle*{2}} \put(110,80){\circle*{2}} 
\put(110,70){\circle*{2}} \put(110,40){\circle*{2}} \put(110,30){\circle*{2}} \put(110,0){\circle*{2}}
\put(200,150){\circle*{2}} \put(200,120){\circle*{2}} \put(200,110){\circle*{2}} \put(200,80){\circle*{2}} 
\put(200,70){\circle*{2}} \put(200,40){\circle*{2}} \put(200,30){\circle*{2}} \put(200,0){\circle*{2}}

\qbezier(110,150)(100,150)(100,135) \qbezier(100,135)(100,120)(110,120)
\qbezier[10](110,150)(120,150)(120,135) \qbezier[10](120,135)(120,120)(110,120)

\qbezier(110,110)(100,110)(100,95) \qbezier(100,95)(100,80)(110,80)
\qbezier[10](110,110)(120,110)(120,95) \qbezier[10](120,95)(120,80)(110,80)

\qbezier(110,70)(100,70)(100,55) \qbezier(100,55)(100,40)(110,40)
\qbezier[10](110,70)(120,70)(120,55) \qbezier[10](120,55)(120,40)(110,40)

\qbezier(110,30)(100,30)(100,15) \qbezier(100,15)(100,0)(110,0)
\qbezier[10](110,30)(120,30)(120,15) \qbezier[10](120,15)(120,0)(110,0)

\qbezier(200,150)(190,150)(190,135) \qbezier(190,135)(190,120)(200,120)
\qbezier[10](200,150)(210,150)(210,135) \qbezier[10](210,135)(210,120)(200,120)

\qbezier(200,110)(190,110)(190,95) \qbezier(190,95)(190,80)(200,80)
\qbezier[10](200,110)(210,110)(210,95) \qbezier[10](210,95)(210,80)(200,80)

\qbezier(200,70)(190,70)(190,55) \qbezier(190,55)(190,40)(200,40)
\qbezier[10](200,70)(210,70)(210,55) \qbezier[10](210,55)(210,40)(200,40)

\qbezier(200,30)(190,30)(190,15) \qbezier(190,15)(190,0)(200,0)
\qbezier[10](200,30)(210,30)(210,15) \qbezier[10](210,15)(210,0)(200,0)

\put(50,150){(i)} \put(245,150){(ii)}

\thicklines

\qbezier(110,150)(30,150)(30,75) \qbezier(30,75)(30,0)(110,0)

\qbezier(200,150)(280,150)(280,75) \qbezier(280,75)(280,0)(200,0)

\qbezier(110,120)(80,115)(110,110) \qbezier(110,80)(80,75)(110,70)
\qbezier(110,40)(80,35)(110,30) 

\qbezier(200,120)(230,115)(200,110) \qbezier(200,80)(230,75)(200,70)
\qbezier(200,40)(230,35)(200,30) 

\end{picture}
$\,$\\
To describe $1$-cycles on $M$, we consider the following key paths:
\begin{align*}
C_1&=\{(z,\,w)=(a t,\,a^2 \sqrt{t(1-t^3)(1/a^3-a^3 t^3)})\>|\>t: 0\to 1,\> \sqrt{*}>0 \}, \\
C_2&=\{(z,\,w)=(t,\,-i \sqrt{t(t^3-a^3)(1/a^3-t^3)})\>|\>t: a\to 1/a,\> \sqrt{*}>0 \}. 
\end{align*}
We first choose $C_1$ in the following figure. After that, we shall see a relation between $C_1$ and $C_2$. 

\begin{picture}(340,100)

\put(10,10){\line(1,0){140}} 
\put(70,0){\line(0,1){80}} 

\put(70,10){\circle*{2}} \put(90,10){\circle*{2}} \put(120,10){\circle*{2}} 

\put(75,19){$+$} \put(75,2){$-$} \put(120,60){{\rm (i) }} \put(90,17){$C_1$} \put(245,35){$C_1$}

\put(283,45){$(a,\,0)$} \put(285,22){$(0,\,0)$}

\put(270,50){\circle*{2}} \put(270,20){\circle*{2}} 

\thicklines

\put(70,10){\line(1,0){20}} \put(120,10){\line(1,0){30}} 

\put(70,14){\line(1,0){20}} \put(83,14){\line(-2,1){5}} \put(83,14){\line(-2,-1){5}}

\thinlines

\qbezier(270,90)(260,90)(260,75) \qbezier(260,75)(260,60)(270,60)
\qbezier[10](270,90)(280,90)(280,75) \qbezier[10](280,75)(280,60)(270,60)

\qbezier[10](270,50)(280,50)(280,35) \qbezier[10](280,35)(280,20)(270,20)

\put(210,90){(i)} 

\qbezier(270,90)(190,90)(190,15) \qbezier(270,60)(240,55)(270,50) \qbezier(270,20)(240,15)(270,10)

\thicklines

\qbezier(270,50)(260,50)(260,35) \qbezier(260,35)(260,20)(270,20)
\put(260,38){\line(-1,-2){3}} \put(260,38){\line(1,-2){3}} 

\end{picture}

$\,$\\
To do this, we introduce the three paths: 
\begin{align*}
C'_2&=\{(z,\,w)=(t,\,-i \sqrt{t(t^3-a^3)(1/a^3-t^3)})\>|\>t: a\to 1,\> \sqrt{*}>0 \}, \\
C_3&=\{(z,\,w)=(e^{it},\,w(t))\>|\>t: 0 \to \pi/3,\>\> w(0)\in -i\mathbb{R}_{>0}\}, \\
C_4&=\{(z,\,w)=(e^{\frac{\pi}{3}i}t,\,e^{\frac{\pi}{6}i}\sqrt{t(t^3+a^3)(t^3+1/a^3)})\>|\>t:1\to 0, \> \sqrt{*}>0\}, 
\end{align*}
and claim that $C_1\cup (C'_2 \cup C_3\cup C_4)$ is homotopic to zero by using path-integrals of the holomorphic 
differential $\dfrac{1-z^2}{w} dz$. Note that $C'_2 \cap C_3=\{(1,\,0)\}$. 

Straightforward calculations yield
\begin{align}
\int_{C_1}\dfrac{1-z^2}{w} dz &=\dfrac{1}{a} \int_0^1 \dfrac{1-a^2 t^2}{\sqrt{t(1-t^3)(\frac{1}{a^3}-a^3 t^3)}} dt, \label{H-inform1} 
\end{align}
\begin{align}
\int_{C'_2}\dfrac{1-z^2}{w} dz &= i \int_a^1 \dfrac{1-t^2}{\sqrt{t(t^3-a^3)(\frac{1}{a^3}-t^3)}} dt, \label{H-inform2} \\
\int_{C_4}\dfrac{1-z^2}{w} dz &=-\dfrac{\sqrt{3}}{2} \int_0^1 \dfrac{1+ t^2}{\sqrt{t(t^3+a^3)(t^3+\frac{1}{a^3})}} dt  
-\dfrac{i}{2} \int_0^1 \dfrac{1- t^2}{\sqrt{t(t^3+a^3)(t^3+\frac{1}{a^3})}} dt. \label{H-inform3}
\end{align}
Now we set $x=(z+1/z)/2=\cos t:1\to 1/2$ along $C_3$. Then $\dfrac{1-z^2}{w}dz=-2\dfrac{z^2}{w} dx$. 
$z+1/z=2x$ implies that $z^2+1/z^2=4x^2-2$, and thus $z^3+1/z^3=(z+1/z)(z^2-1+1/z^2)=8x^3-6x$. Hence, 
\begin{align*}
\left( \dfrac{z^2}{w}  \right)^2&=\dfrac{z^4}{z(z^3-a^3)(z^3-\frac{1}{a^3})}=
-\dfrac{1}{a^3+\frac{1}{a^3}-(z^3+\frac{1}{z^3})} \\
&=-\dfrac{1}{a^3+\frac{1}{a^3}+6x-8x^3}<0.
\end{align*}
To choose a suitable branch, we substitute $t=0$ to $z^2/w$. Then 
$\dfrac{z^2}{w}(t=0)=\dfrac{1}{w(0)}\in i \mathbb{R}_{>0}$. As a result, 
\begin{align}
\dfrac{z^2}{w} = \dfrac{i}{\sqrt{ a^3+\frac{1}{a^3}+6x-8x^3 }} \in i \mathbb{R}_{>0}.  \label{H-branch1}
\end{align}
Consequently, we have 
\begin{align}
\int_{C_3} \dfrac{1-z^2}{w}dz=2 i \int^1_{\frac{1}{2}}  \dfrac{dx}{\sqrt{ a^3+\frac{1}{a^3}+6x-8x^3 }}. \label{H-inform4}
\end{align}
Moreover, combining \eqref{H-branch1} and substituting $t=\pi/3$ to $z^2/w$, we find 
$\dfrac{e^{\frac{2}{3} \pi i}}{w(\pi/3)} \in i \, \mathbb{R}_{>0}$. 
So $w(\pi/3) \in e^{\frac{\pi }{6} i } \mathbb{R}_{>0}$. 
Thus, $C_3 \cap C_4=\{(e^{\frac{\pi}{3}i},\,e^{\frac{\pi}{6}i}\sqrt{(1+a^3)(1+1/a^3)})\} $, and therefore, 
$C'_2 \cup C_3\cup C_4$ defines a connected path. 

Since $\pi_{{\rm H}} (C_1)\cup \pi_{{\rm H}} ( C'_2 \cup C_3\cup C_4)$ is homotopic to zero, 
there exists $n\in \{ 0,\,1 \}$ such that $C_1 \cup j^n (C'_2 \cup C_3\cup C_4)$ is homotopic to zero. 
It follows from \eqref{H-inform1}, \eqref{H-inform2}, \eqref{H-inform3}, and \eqref{H-inform4} that 
\begin{align*}
&\dfrac{1}{a} \int_0^1 \dfrac{1-a^2 t^2}{\sqrt{t(1-t^3)(\frac{1}{a^3}-a^3 t^3)}} dt+(-1)^n \Bigg\{ 
i \int_a^1 \dfrac{1-t^2}{\sqrt{t(t^3-a^3)(\frac{1}{a^3}-t^3)}} dt \\
&\>-\dfrac{\sqrt{3}}{2} \int_0^1 \dfrac{1+ t^2}{\sqrt{t(t^3+a^3)(t^3+\frac{1}{a^3})}} dt  
-\dfrac{i}{2} \int_0^1 \dfrac{1- t^2}{\sqrt{t(t^3+a^3)(t^3+\frac{1}{a^3})}} dt 
\end{align*}
\begin{align*}
&\quad +2 i \int^1_{\frac{1}{2}}  \dfrac{dx}{\sqrt{ a^3+\frac{1}{a^3}+6x-8x^3 }} \Bigg\} =0. 
\end{align*}
The real part of the above implies $n=0$. Hence $C_1\cup (C'_2 \cup C_3\cup C_4)$ is homotopic to zero (see the following figure). 
So we obtain $C_2$ as follows. 

\begin{picture}(340,110)

\put(30,20){\line(1,0){90}} 
\put(40,10){\line(0,1){80}} 

\put(40,20){\circle*{2}} \put(60,20){\circle*{2}} \put(90,20){\circle*{2}}

\put(248,55){$(a,\,0)$} \put(250,32){$(0,\,0)$} 

\put(242,43){$-$}  

\put(230,60){\circle*{2}} \put(230,30){\circle*{2}}

\qbezier(230,100)(220,100)(220,85) \qbezier(220,85)(220,70)(230,70)
\qbezier[10](230,100)(240,100)(240,85) \qbezier[10](240,85)(240,70)(230,70)

\qbezier[10](230,60)(240,60)(240,45) \qbezier[10](240,45)(240,30)(230,30)

\put(170,100){(i)} 

\qbezier(230,100)(150,100)(150,25) 

\qbezier(230,70)(200,65)(230,60) 

\qbezier(230,30)(200,25)(230,20)

\put(205,40){$C_1$} \put(203,70){$C'_2$} \put(165,59){$C_3$} \put(190,20){$C_4$} 

\put(45,10){$C_1$} \put(65,10){$C'_2$} \put(247,70){$C_2$} \put(75,50){$C_3$} \put(43,54){$C_4$} 

\put(90,80){{\rm (i)}}

\thicklines

\qbezier(245,70)(215,65)(245,60) \put(239,69){\line(-1,0){6}} \put(239,69){\line(-1,-1){4}} 

\qbezier(230,60)(220,60)(220,45) \qbezier(220,45)(220,30)(230,30) \put(220,45){\line(-1,-2){3}} \put(220,45){\line(1,-2){3}} 
\qbezier(215,65)(217,62)(230,60) \put(219,62.5){\line(1,-1){4}} \put(219,62.5){\line(1,0){7}}

\qbezier(215,65)(170,65)(170,40) \put(180,58){\line(1,0){6}} \put(180,58){\line(2,3){3}}
\qbezier(170,40)(170,30)(230,30) \put(200,31){\line(-3,-1){7}} \put(200,31){\line(-3,1){7}}

\put(40,20){\line(1,0){20}} \put(90,20){\line(1,0){30}}

\put(40,24){\line(1,0){20}} \put(53,24){\line(-2,1){5}} \put(53,24){\line(-2,-1){5}}

\put(60,24){\line(1,0){20}} \put(70,24){\line(-2,1){5}} \put(70,24){\line(-2,-1){5}}

\qbezier(80,24)(80,50)(60,60)

\put(60,60){\line(-20,-36){20}}

\put(73,49){\line(3,-2){6}} \put(73,49){\line(0,-1){7}}

\put(50,42){\line(0,1){6}} \put(50,42){\line(3,2){5}} 

\end{picture}

In the process, we have also showed that the two equations 
\begin{align*}
&\dfrac{1}{a} \int_0^1 \dfrac{1-a^2t^2}{\sqrt{t(1-t^3)(\frac{1}{a^3}-a^3t^3)}}dt = 
\dfrac{\sqrt{3}}{2} \int_0^1 \dfrac{1+t^2}{\sqrt{t(t^3+a^3)(t^3+\frac{1}{a^3})}}dt, \\
&\int_0^1 \dfrac{1-t^2}{\sqrt{t(t^3+a^3)(t^3+\frac{1}{a^3})}} dt = 
2 \int_a^1 \dfrac{1-t^2}{\sqrt{t(t^3-a^3)(\frac{1}{a^3}-t^3)}} dt + 4 \int_{\frac{1}{2}}^1 \dfrac{dt}{\sqrt{a^3+\frac{1}{a^3}+6t-8t^3}}.
\end{align*}
To determine other paths, we extend $C_3$ in the following way: 
\[
C'_3:=\{(z,\,w)=(e^{it},\,w(t))\>|\>t: 0 \to 2\pi/3,\>\> w(0)\in -i\mathbb{R}_{>0}\}. 
\]

\begin{picture}(340,160)

\put(10,70){\line(1,0){140}} 
\put(70,10){\line(0,1){120}} 

\put(70,70){\circle*{2}} \put(90,70){\circle*{2}} \put(120,70){\circle*{2}} 
\put(57,88){\circle*{2}} \put(40,111){\circle*{2}} \put(57,52){\circle*{2}} \put(40,29){\circle*{2}} 

\put(70,70){\line(-13,18){13}} \put(40,29){\line(-17,-23){12}}
\qbezier(40,111)(30,70)(57,52) 

\put(75,74){$+$} \put(75,62){$-$} \put(120,120){{\rm (i) }} \put(120,77){$C_2$} \put(245,95){$C_1$} \put(10,120){$j(\varphi(C_2))$} 
\put(90,110){$C'_3$} \put(197,70){$C'_3$} \put(220,45){$j(\varphi(C_2))$} 

\put(288,105){$(a,\,0)$} \put(285,82){$(0,\,0)$}

\put(270,110){\circle*{2}} \put(270,80){\circle*{2}} \put(270,70){\circle*{2}} \put(270,40){\circle*{2}} 

\thicklines

\put(70,70){\line(1,0){20}} \put(120,70){\line(1,0){30}} \put(57,88){\line(-17,23){17}} \put(57,52){\line(-17,-23){17}} 

\put(90,74){\line(1,0){30}} \put(103,74){\line(-2,1){5}} \put(103,74){\line(-2,-1){5}}

\put(62,90){\line(-17,23){17}} \put(52,103.5){\line(0,-1){6}} \put(52,103.5){\line(2,-1){6}}

\qbezier(110,70)(95,125)(45,105) \put(89,105){\line(3,-1){6}} \put(89,105){\line(1,-3){2}} 

\thinlines

\qbezier(270,150)(260,150)(260,135) \qbezier(260,135)(260,120)(270,120)
\qbezier[10](270,150)(280,150)(280,135) \qbezier[10](280,135)(280,120)(270,120)

\qbezier[10](270,110)(280,110)(280,95) \qbezier[10](280,95)(280,80)(270,80)

\qbezier[10](270,70)(280,70)(280,55) \qbezier[10](280,55)(280,40)(270,40)

\qbezier(270,30)(260,30)(260,15) \qbezier(260,15)(260,0)(270,0)
\qbezier[10](270,30)(280,30)(280,15) \qbezier[10](280,15)(280,0)(270,0)

\put(210,150){(i)} 

\qbezier(270,150)(190,150)(190,75) \qbezier(190,75)(190,0)(270,0)

\qbezier(270,120)(240,115)(270,110) \qbezier(270,80)(240,75)(270,70)
\qbezier(270,40)(240,35)(270,30) 
\qbezier(270,110)(260,110)(260,95) \qbezier(260,95)(260,80)(270,80)

\put(283,123){$C_2$} \put(285,65){$(e^{\frac{2}{3}\pi i} a ,\,0)$} \put(285,40){$(e^{\frac{2}{3}\pi i}/a,\,0)$}

\thicklines

\qbezier(270,70)(260,70)(260,55) \qbezier(260,55)(260,40)(270,40)
\put(260,55){\line(-1,2){3}} \put(260,55){\line(1,2){3}} 

\qbezier(210,90)(210,55)(260,55) \qbezier(255,115)(210,115)(210,90) 
\put(210,85){\line(-1,2){3}} \put(210,85){\line(1,2){3}} 
\qbezier(285,120)(255,115)(285,110) \put(279,119){\line(-1,0){7}} \put(279,119){\line(-1,-1){4}}

\end{picture}
$\,$\\
\eqref{H-branch1} yields $w(2\pi/3) \in i e^{\frac{\pi}{3} i} \mathbb{R}_{>0}$, and thus $w(2\pi/3)=j(\varphi (w(0)))$. 
It follows that $C'_3 \cup j(\varphi(C_2))=\{ (e^{\frac{2}{3} \pi i},\, i e^{\frac{\pi}{3} i} \sqrt{(1-a^3)(1/a^3-1)} ) \}$. 
As a result, we obtain $j(\varphi(C_2))$ in the previous figure. 

\begin{picture}(340,160)

\put(0,70){\line(1,0){80}} \put(60,10){\line(0,1){120}}

\put(60,70){\circle*{2}} \put(47,88){\circle*{2}} \put(30,111){\circle*{2}} \put(47,52){\circle*{2}} \put(30,29){\circle*{2}} 
 
\put(60,70){\line(-13,18){13}} \put(30,29){\line(-17,-23){12}} \qbezier(30,111)(20,70)(47,52)

\put(65,74){$+$} \put(65,62){$-$} \put(35,110){$+$} \put(28,95){$-$} \put(30,43){$+$} \put(40,34){$-$} 

\put(10,140){{\rm (i) }}

\put(45,105){$j(\varphi (C'_2))$} \put(88,100){$j(\varphi (C_3))$} 
\put(145,100){$j(\varphi (C'_2))$} \put(156,80){$j(\varphi (C_1))$} \put(100,60){$j(\varphi (C_4))$}

\put(100,70){\line(1,0){80}} \put(160,10){\line(0,1){120}}

\put(160,70){\circle*{2}} \put(147,88){\circle*{2}} \put(130,111){\circle*{2}} \put(147,52){\circle*{2}} \put(130,29){\circle*{2}} 
 
\put(160,70){\line(-13,18){13}} \put(130,29){\line(-17,-23){12}} \qbezier(130,111)(120,70)(147,52)

\put(165,72){$+$} \put(165,63){$-$} \put(133,108){$+$} \put(128,100){$-$} \put(130,43){$+$} \put(140,34){$-$} 

\put(110,140){{\rm (ii) }}

\put(195,150){$(\infty,\infty)$} \put(196,122){$(1/a,\,0)$} \put(202,105){$(a,\,0)$} 
\put(202,82){$(0,\,0)$} \put(193,65){$(e^{\frac{2}{3}\pi i} a ,\,0)$} \put(191,40){$(e^{\frac{2}{3}\pi i}/a,\,0)$}
\put(191,24){$(e^{-\frac{2}{3} \pi i} a ,\,0)$} \put(188,0){$(e^{-\frac{2}{3} \pi i}/a,\,0)$} 

\put(262,133){$-$} \put(262,93){$-$} \put(262,56){$-$} \put(262,13){$-$} 

\put(230,133){$+$} \put(230,93){$+$} \put(230,53){$+$} \put(230,13){$+$}

\put(250,150){\circle*{2}} \put(250,120){\circle*{2}} \put(250,110){\circle*{2}} \put(250,80){\circle*{2}} 
\put(250,70){\circle*{2}} \put(250,40){\circle*{2}} \put(250,30){\circle*{2}} \put(250,0){\circle*{2}}

\qbezier(250,150)(240,150)(240,135) \qbezier(240,135)(240,120)(250,120)
\qbezier[10](250,150)(260,150)(260,135) \qbezier[10](260,135)(260,120)(250,120)

\qbezier(250,110)(240,110)(240,95) \qbezier(240,95)(240,80)(250,80)
\qbezier[10](250,110)(260,110)(260,95) \qbezier[10](260,95)(260,80)(250,80)

\qbezier(250,70)(240,70)(240,55) \qbezier(240,55)(240,40)(250,40)
\qbezier[10](250,70)(260,70)(260,55) \qbezier[10](260,55)(260,40)(250,40)

\qbezier(250,30)(240,30)(240,15) \qbezier(240,15)(240,0)(250,0)
\qbezier[10](250,30)(260,30)(260,15) \qbezier[10](260,15)(260,0)(250,0)

\put(295,150){(ii)} 

\qbezier(259,62)(273,64)(285,58)

\qbezier(273,103)(210,80)(251,65)

\thicklines

\qbezier(250,150)(330,150)(330,75) \qbezier(330,75)(330,0)(250,0)

\qbezier(250,120)(280,115)(250,110) \qbezier(250,80)(280,75)(250,70)
\qbezier(250,40)(280,35)(250,30)

\put(160,70){\line(1,0){20}} \put(147,88){\line(-17,23){17}} \put(147,52){\line(-17,-23){17}} 

\put(60,70){\line(1,0){20}} \put(47,88){\line(-17,23){17}} \put(47,52){\line(-17,-23){17}} 

\put(49,91){\line(-17,23){11}} \put(42.5,100){\line(3,-2){6}} \put(42.5,100){\line(1,-3){2}}

\put(144,86){\line(-17,23){11}} \put(137.5,95){\line(3,-2){6}} \put(137.5,95){\line(1,-3){2.5}}

\qbezier(133,100)(120,92)(120,74) \put(120,74){\line(1,0){33}} \put(144,86){\line(17,-23){9}} 

\put(147,82){\line(3,-2){5}} \put(147,82){\line(1,-3){2}} 

\put(123,89){\line(0,1){6}} \put(123,89){\line(2,1){5}} 

\put(137,74){\line(-2,1){5}} \put(137,74){\line(-2,-1){5}}

\qbezier(250,70)(260,70)(260,55) 

\qbezier(320,70)(320,80)(250,80) \qbezier(260,55)(262,40)(264,37) \qbezier(264,34)(320,40)(320,70) 

\put(300,43){\line(-1,0){7}} \put(300,43){\line(-2,-3){4}} \put(280,28){$j(\varphi( C_3 ))$}

\put(290,79){\line(3,1){6}} \put(290,79){\line(3,-2){5}} \put(288,86){$j(\varphi( C_4 ))$}

\put(257,71.5){\line(1,1){4}} \put(257,71.5){\line(4,-1){6}} \put(265,67){$j(\varphi( C_1 ))$}

\put(259,63){\line(-2,1){5}} \put(259,63){\line(1,3){2}} \put(270,49){$j( \varphi(C'_2) )$}

\qbezier(243,70)(253,70)(253,55) \qbezier(253,55)(253,40)(243,40) 
\put(253,55){\line(-1,2){3}} \put(253,55){\line(1,2){3}}

\put(275,102){$j( \varphi(C_2) )$}

\end{picture}

$\,$\\
Note that $j(\varphi(C_1)) \cup j(\varphi ( C'_2 \cup C_3 \cup C_4 ))$ is homotopic to zero. 
Since the upper edges of the thick lines in {\rm (i)} is identified with 
the lower edges of the thick lines in (ii) in {\rm figure (H)}, $j(\varphi(C_1))$, $j(\varphi  (C'_2))$, $j(\varphi( C_3 ))$, 
$ j(\varphi(C_4 ))$ lie in {\rm (ii)} (see the above figure). 

Next, $j(\varphi(C'_3))$ lies in {\rm (ii)} (see the following figure). 
In the following left figure, $j(\varphi(C'_3))$ meets the thin line joining two branch points. 
Hence, $j(\varphi(C'_3))$ is given in the following right figure. Moreover, it is easy to verify that $j(\varphi(C'_3))$ meets 
$\varphi^2 (C_2)(=(j \circ \varphi) (j(\varphi(C_2))))$ at the end in a similar way as the above. 

\begin{picture}(340,160)

\put(0,70){\line(1,0){80}} \put(60,10){\line(0,1){120}}

\put(60,70){\circle*{2}} \put(47,88){\circle*{2}} \put(30,111){\circle*{2}} \put(47,52){\circle*{2}} \put(30,29){\circle*{2}} 
 
\put(60,70){\line(-13,18){13}} \put(30,29){\line(-17,-23){12}} \qbezier(30,111)(20,70)(47,52)

\put(65,74){$+$} \put(65,62){$-$} \put(40,100){$+$} \put(31,91){$-$} \put(32,50){$+$} \put(40,34){$-$} 

\put(-5,24){$\varphi^2 (C_2)$}

\put(10,120){{\rm (ii) }}

\put(195,150){$(\infty,\infty)$} \put(196,122){$(1/a,\,0)$} \put(202,105){$(a,\,0)$} 
\put(202,82){$(0,\,0)$} \put(193,65){$(e^{\frac{2}{3}\pi i} a ,\,0)$} \put(191,40){$(e^{\frac{2}{3}\pi i}/a,\,0)$}
\put(191,24){$(e^{-\frac{2}{3} \pi i} a ,\,0)$} \put(188,0){$(e^{-\frac{2}{3} \pi i}/a,\,0)$} 

\put(192,134){$-$} \put(192,93){$-$} \put(192,53){$-$} \put(192,13){$-$} 
\put(262,133){$-$} \put(262,93){$-$} \put(262,56){$-$} \put(262,13){$-$} 

\put(160,133){$+$} \put(160,93){$+$} \put(160,58){$+$} \put(160,13){$+$} 
\put(230,133){$+$} \put(230,93){$+$} \put(230,53){$+$} \put(230,13){$+$} 

\put(180,150){\circle*{2}} \put(180,120){\circle*{2}} \put(180,110){\circle*{2}} \put(180,80){\circle*{2}} 
\put(180,70){\circle*{2}} \put(180,40){\circle*{2}} \put(180,30){\circle*{2}} \put(180,0){\circle*{2}}

\put(250,150){\circle*{2}} \put(250,120){\circle*{2}} \put(250,110){\circle*{2}} \put(250,80){\circle*{2}} 
\put(250,70){\circle*{2}} \put(250,40){\circle*{2}} \put(250,30){\circle*{2}} \put(250,0){\circle*{2}}

\qbezier(180,150)(170,150)(170,135) \qbezier(170,135)(170,120)(180,120)
\qbezier[10](180,150)(190,150)(190,135) \qbezier[10](190,135)(190,120)(180,120)

\qbezier(180,110)(170,110)(170,95) \qbezier(170,95)(170,80)(180,80)
\qbezier[10](180,110)(190,110)(190,95) \qbezier[10](190,95)(190,80)(180,80)

\qbezier(180,70)(170,70)(170,55) \qbezier(170,55)(170,40)(180,40)
\qbezier[10](180,70)(190,70)(190,55) \qbezier[10](190,55)(190,40)(180,40)

\qbezier(180,30)(170,30)(170,15) \qbezier(170,15)(170,0)(180,0)
\qbezier[10](180,30)(190,30)(190,15) \qbezier[10](190,15)(190,0)(180,0)

\qbezier(250,150)(240,150)(240,135) \qbezier(240,135)(240,120)(250,120)
\qbezier[10](250,150)(260,150)(260,135) \qbezier[10](260,135)(260,120)(250,120)

\qbezier(250,110)(240,110)(240,95) \qbezier(240,95)(240,80)(250,80)
\qbezier[10](250,110)(260,110)(260,95) \qbezier[10](260,95)(260,80)(250,80)

\qbezier(250,70)(240,70)(240,55) \qbezier(240,55)(240,40)(250,40)
\qbezier[10](250,70)(260,70)(260,55) \qbezier[10](260,55)(260,40)(250,40)

\qbezier(250,30)(240,30)(240,15) \qbezier(240,15)(240,0)(250,0)
\qbezier[10](250,30)(260,30)(260,15) \qbezier[10](260,15)(260,0)(250,0)

\put(120,150){(i)} \put(295,150){(ii)} \put(118,70){$C'_3$} \put(283,52){$j(\varphi(C'_3))$}

\qbezier(240,10)(260,0)(280,17) 

\thicklines

\qbezier(180,150)(100,150)(100,75) \qbezier(100,75)(100,0)(180,0)

\qbezier(250,150)(330,150)(330,75) \qbezier(330,75)(330,0)(250,0)

\qbezier(180,120)(150,115)(180,110) \qbezier(180,80)(150,75)(180,70)
\qbezier(180,40)(150,35)(180,30) 

\qbezier(250,120)(280,115)(250,110) \qbezier(250,80)(280,75)(250,70)
\qbezier(250,40)(280,35)(250,30)

\qbezier(260,55)(300,50)(265,35) \qbezier[15](265,35)(260,20)(240,15)
\put(270,37){\line(2,3){4}} \put(270,37){\line(1,0){7}}

\qbezier(250,30)(240,30)(240,15) \qbezier(240,15)(240,0)(250,0)
\put(240,15){\line(-1,2){3}} \put(240,15){\line(1,2){3}} 

\qbezier(35,104)(0,70)(35,36)

\put(17.5,70){\line(-1,2){3}} \put(17.5,70){\line(1,2){3}}

\qbezier(130,90)(130,55)(170,55) \qbezier(165,115)(130,115)(130,90) 
\put(130,85){\line(-1,2){3}} \put(130,85){\line(1,2){3}} 

\put(-11,97){$j(\varphi(C'_3))$}

\put(280,22){$\varphi^2 (C_2)$}

\put(60,70){\line(1,0){20}} \put(47,88){\line(-17,23){17}} \put(47,52){\line(-17,-23){17}} 

\put(44,54){\line(-17,-23){17}} \put(35,42){\line(0,1){5}} \put(35,42){\line(2,1){5}}

\end{picture}

$\,$\\
Finally, $\varphi^2 (C_1)$, $\varphi^2 (C'_2)$, $\varphi^2 (C_3)$, $\varphi^2 (C_4)$ lie in {\rm (i)} and   
$\varphi^2 (C_1) \cup \varphi^2 ( C'_2\cup C_3 \cup C_4 )$ is homotopic to zero. So we obtain the 
following figure. 

\begin{picture}(340,160)

\put(-10,90){\line(1,0){100}} \put(40,30){\line(0,1){120}} 

\put(40,90){\circle*{2}} \put(60,90){\circle*{2}} \put(90,90){\circle*{2}} 
\put(27,108){\circle*{2}} \put(10,131){\circle*{2}} \put(27,72){\circle*{2}} \put(10,49){\circle*{2}}

\put(25,80){$O$} 

\put(40,90){\line(-13,18){13}} \put(10,49){\line(-17,-23){12}}
\qbezier(10,131)(0,90)(27,72) 

\put(47,94){$+$} \put(47,84){$-$} \put(20,120){$+$} \put(11,111){$-$} 
\put(5,53){$+$} \put(13,46){$-$} 

\put(60,140){{\rm (i) }}

\put(100,90){\line(1,0){70}} \put(150,30){\line(0,1){120}} 

\put(150,90){\circle*{2}} \put(170,90){\circle*{2}}  
\put(137,108){\circle*{2}} \put(120,131){\circle*{2}} \put(137,72){\circle*{2}} \put(120,49){\circle*{2}}

\put(140,80){$O$} 

\put(150,90){\line(-13,18){13}} \put(120,49){\line(-17,-23){12}}
\qbezier(120,131)(110,90)(137,72) 

\put(155,94){$+$} \put(155,82){$-$} \put(130,120){$+$} \put(121,111){$-$} 
\put(120,69){$+$} \put(128,52){$-$} 

\put(130,140){{\rm (ii) }} 

\qbezier(46,32)(35,35)(30,56) \qbezier(0,65)(5,55)(25,62) \qbezier(34,75)(35,105)(55,105)

\thicklines

\put(40,90){\line(1,0){20}} \put(27,108){\line(-17,23){17}} \put(27,72){\line(-17,-23){17}} 
 
\put(150,90){\line(1,0){20}} \put(137,108){\line(-17,23){17}} \put(137,72){\line(-17,-23){17}} 
  
\put(133,74){\line(-17,-23){17}} \put(122,59.5){\line(1,4){1.5}} \put(122,59.5){\line(2,1){6}} 

\put(90,60){$\varphi^2 (C_2)$}

\qbezier(13,53)(65,45)(85,90) \put(65,63){\line(-1,0){6}} \put(65,63){\line(-1,-2){3}} 
\put(50,47){$\varphi^2 (C'_3)$}

\put(32,72){\line(-17,-23){11}} \put(36,77){\line(0,1){6}} \put(36,77){\line(2,1){6}} 
\put(32,72){\line(17,23){11}} \put(26,64){\line(0,1){6}} \put(26,64){\line(2,1){6}} 

\qbezier(21,57)(37,53)(55,63) \put(42,57.5){\line(-5,1){6}} \put(42,57.5){\line(-1,-1){4}}

\put(55,63){\line(-1,2){12}} \put(49,75){\line(0,-1){6}} \put(49,75){\line(3,-2){5}} 

\put(50,78){$\varphi^2 (C_4)$} \put(50,30){$\varphi^2 (C_3)$} 
\put(-10,70){$\varphi^2 (C'_2)$} \put(60,104){$\varphi^2 (C_1)$}

\thinlines

\put(200,150){{\rm (i) }} 

\put(275,150){$(\infty,\infty)$} \put(276,122){$(1/a,\,0)$} \put(282,105){$(a,\,0)$} 
\put(282,82){$(0,\,0)$} \put(273,65){$(e^{\frac{2}{3}\pi i} a ,\,0)$} \put(271,40){$(e^{\frac{2}{3}\pi i}/a,\,0)$}
\put(271,24){$(e^{-\frac{2}{3} \pi i} a ,\,0)$} \put(268,0){$(e^{-\frac{2}{3} \pi i}/a,\,0)$} 

\put(272,134){$-$} \put(272,93){$-$} \put(272,53){$-$} \put(272,13){$-$} 
\put(240,133){$+$} \put(240,93){$+$} \put(240,58){$+$} \put(240,13){$+$} 

\put(260,150){\circle*{2}} \put(260,120){\circle*{2}} \put(260,110){\circle*{2}} \put(260,80){\circle*{2}} 
\put(260,70){\circle*{2}} \put(260,40){\circle*{2}} \put(260,30){\circle*{2}} \put(260,0){\circle*{2}}

\qbezier(260,150)(250,150)(250,135) \qbezier(250,135)(250,120)(260,120)
\qbezier[10](260,150)(270,150)(270,135) \qbezier[10](270,135)(270,120)(260,120)

\qbezier(260,110)(250,110)(250,95) \qbezier(250,95)(250,80)(260,80)
\qbezier[10](260,110)(270,110)(270,95) \qbezier[10](270,95)(270,80)(260,80)

\qbezier(260,70)(250,70)(250,55) \qbezier(250,55)(250,40)(260,40)
\qbezier[10](260,70)(270,70)(270,55) \qbezier[10](270,55)(270,40)(260,40)

\qbezier(260,30)(250,30)(250,15) \qbezier(250,15)(250,0)(260,0)
\qbezier[10](260,30)(270,30)(270,15) \qbezier[10](270,15)(270,0)(260,0)

\thicklines

\qbezier(260,150)(180,150)(180,75) \qbezier(180,75)(180,0)(260,0)

\qbezier(260,120)(230,115)(260,110) \qbezier(260,80)(230,75)(260,70)
\qbezier(260,40)(230,35)(260,30) 

\qbezier[20](260,30)(230,25)(230,55) \qbezier[20](230,55)(230,82)(260,80) 
\qbezier(260,30)(270,30)(270,15) \qbezier(270,15)(270,10)(251,10)
\qbezier[25](251,10)(190,10)(190,55) \qbezier[25](190,55)(190,90)(260,80)

\qbezier[6](230,50)(229,52)(227,57) \qbezier[6](230,50)(231,52)(233,57) \put(200,40){$\varphi^2 (C_1)$}
\qbezier[4](219,81)(217,82)(213,84) \qbezier[5](219,81)(217,81)(213,77) \put(200,89){$\varphi^2 (C_4)$}
\qbezier[5](217,15)(220,16)(223,17) \qbezier[5](217,15)(219,12)(221,9) \put(170,0){$\varphi^2 (C_3)$}
\put(269,23){\line(0,1){7}} \put(269,23){\line(-2,1){6}} \put(155,25){$\varphi^2 (C'_2)$}

\thinlines

\qbezier(187,27)(230,27)(270,20) \qbezier(202,3)(210,5)(212,17)

\end{picture}

$\,$\\
Therefore, setting 
\begin{align*}
A_3 &:= -\varphi (C_2)-\varphi (C_1)+\varphi^2 (C_1) +j (\varphi (C_2))+j(\varphi (C_1))-j(\varphi^2 (C_1)), \\
B_3 &:= \varphi^2 (C_2) -j(\varphi^2 (C_2)),\quad 
A_2 :=   \varphi (C_1) -j(\varphi (C_1)), \quad A_1 :=  -C_2 +j(C_2), \\
B_2 &:=  \varphi (C_2) -j(\varphi (C_2))+B_3, 
B_1 := C_1 -j(C_1)+B_2, 
\end{align*}
we obtain a canonical homology basis as follows. 

\begin{picture}(340,100)

\qbezier(20,50)(20,90)(160,90) \qbezier(160,90)(300,90)(300,50) 
\qbezier(20,50)(20,10)(160,10) \qbezier(160,10)(300,10)(300,50) 

\qbezier(55,55)(80,35)(105,55) \qbezier(135,55)(160,35)(185,55) \qbezier(215,55)(240,35)(265,55) 
\qbezier(60,52)(80,62)(100,52) \qbezier(140,52)(160,62)(180,52) \qbezier(220,52)(240,62)(260,52)

\put(30,85){{\rm (ii)}} \put(30,15){{\rm (i)}} 

\qbezier(45,50)(45,30)(80,30) \qbezier(115,50)(115,30)(80,30) 
\qbezier(45,50)(45,70)(80,70) \qbezier(115,50)(115,70)(80,70) 

\qbezier(125,50)(125,30)(160,30) \qbezier(195,50)(195,30)(160,30) 
\qbezier(125,50)(125,70)(160,70) \qbezier(195,50)(195,70)(160,70) 

\qbezier(205,50)(205,30)(240,30) \qbezier(275,50)(275,30)(240,30) 
\qbezier(205,50)(205,70)(240,70) \qbezier(275,50)(275,70)(240,70) 

\qbezier(71,46)(55,38)(55,20) \qbezier[15](55,20)(75,25)(71,46) 

\qbezier(151,46)(135,35)(140,10) \qbezier[15](140,10)(160,25)(151,46) 

\qbezier(249,46)(265,38)(265,20) \qbezier[15](249,46)(243,25)(265,20)

\put(80,30){\line(-2,1){5}} \put(80,30){\line(-2,-1){5}} \put(100,25){$A_1$}
\put(160,30){\line(-2,1){5}} \put(160,30){\line(-2,-1){5}} \put(180,25){$A_2$}
\put(240,30){\line(-2,1){5}} \put(240,30){\line(-2,-1){5}} \put(275,35){$A_3$}

\put(62.5,40){\line(-2,-1){5}}  \put(62.5,40){\line(0,-1){5}}  \put(50,10){$B_1$}
\put(144.5,40){\line(-2,-1){5}} \put(144.5,40){\line(0,-1){5}} \put(135,0){$B_2$}
\put(257.5,40){\line(2,-1){5}} \put(257.5,40){\line(0,-1){5}} \put(260,10){$B_3$}

\end{picture}

\subsubsection{Period matrix}\label{H-periods-arg}

Key $1$-cycles of the canonical homology basis as in \S~\ref{H-canonical} are given by $C_1 \cup \{-j(C_1)\}$ and 
$C_2 \cup \{-j(C_2)\}$. Since the two key $1$-cycles meet the pole $(a,\,0)$ of the $z^j/w^3 dz$'s, we introduce two useful $1$-cycles. 
First we extend $C_3$ in the following way. \\

\begin{picture}(340,100)

\put(30,50){\line(1,0){90}} 
\put(40,0){\line(0,1){100}} 

\put(40,50){\circle*{2}} \put(60,50){\circle*{2}} \put(90,50){\circle*{2}}

\put(248,45){$(a,\,0)$} \put(250,22){$(0,\,0)$} 

\put(242,33){$-$}  

\put(230,50){\circle*{2}} \put(230,20){\circle*{2}}

\qbezier(230,90)(220,90)(220,75) \qbezier(220,75)(220,60)(230,60)
\qbezier[10](230,90)(240,90)(240,75) \qbezier[10](240,75)(240,60)(230,60)

\qbezier[10](230,50)(240,50)(240,35) \qbezier[10](240,35)(240,20)(230,20)

\put(170,90){(i)} 

\qbezier(230,90)(150,90)(150,15) 

\qbezier(230,60)(200,55)(230,50) 

\qbezier(230,20)(200,15)(230,10)

\put(205,30){$C_1$} \put(165,49){$C''_3$} \put(190,25){$C_4$} 

\put(45,40){$C_1$} \put(75,80){$C''_3$} \put(43,84){$C_4$} \put(20,14){$\varphi^2 (C_4)$} \put(180,4){$\varphi^2 (C_4)$} 

\put(90,70){{\rm (i)}}

\thicklines

\qbezier(230,50)(220,50)(220,35) \qbezier(220,35)(220,20)(230,20) \put(220,35){\line(-1,-2){3}} \put(220,35){\line(1,-2){3}} 

\qbezier(215,55)(170,55)(170,30) \put(180,48){\line(1,0){6}} \put(180,48){\line(2,3){3}}
\qbezier(170,30)(170,20)(230,20) \put(200,21){\line(-3,-1){7}} \put(200,21){\line(-3,1){7}}

\put(40,50){\line(1,0){20}} \put(90,50){\line(1,0){30}}

\put(40,54){\line(1,0){20}} \put(53,54){\line(-2,1){5}} \put(53,54){\line(-2,-1){5}}

\put(60,54){\line(1,0){20}} \put(70,54){\line(-2,1){5}} \put(70,54){\line(-2,-1){5}}

\qbezier(80,54)(80,80)(60,90)

\put(60,90){\line(-20,-36){20}}

\put(73,79){\line(3,-2){6}} \put(73,79){\line(0,-1){7}}

\put(50,72){\line(0,1){6}} \put(50,72){\line(3,2){5}}

\qbezier(80,46)(80,20)(60,10)

\put(60,10){\line(-20,36){20}}

\put(73,21){\line(0,-1){7}} \put(73,21){\line(-2,-1){7}}

\put(50,28){\line(0,-1){6}} \put(50,28){\line(3,-2){5}}

\qbezier[20](230,20)(190,13)(170,15) \qbezier[20](215,55)(160,30)(170,15) 

\qbezier[3](186,41)(183,41)(181,41) \qbezier[3](187,39)(186,36)(185,34) 
\qbezier[3](188,15)(185,16)(182,17) \qbezier[3](188,14)(183,13)(181,12) 

\end{picture}

\[
C''_3 := \{(z,\,w)=(e^{it},\,w(t))\>|\>t: -\pi/3 \to \pi/3,\>\> w(0)\in -i\mathbb{R}_{>0}\}. 
\]
$\,$\\
By the arguments as in \S~\ref{H-canonical}, we conclude that $C_1 \cup \{-j(C_1)\}$ is homotopic to 
$(-C_4) \cup (-C''_3) \cup \varphi^2 (C_4)$. 
Next we introduce the following extension of $C_4$. 
\[
C'_4=\{(z,\,w)=(e^{\frac{\pi}{3}i}t,\,e^{\frac{\pi}{6}i}\sqrt{t(t^3+a^3)(t^3+1/a^3)})\>|\>t:\infty\to 0, \> \sqrt{*}>0\}. 
\]

\begin{picture}(340,120)

\put(0,70){\line(1,0){90}} \put(10,65){\line(0,1){55}} 

\put(0,50){\line(1,0){90}} \put(10,0){\line(0,1){55}} 

\put(10,70){\circle*{2}} \put(30,70){\circle*{2}} \put(60,70){\circle*{2}} 

\put(10,50){\circle*{2}} \put(30,50){\circle*{2}} \put(60,50){\circle*{2}} 

\put(198,45){$(a,\,0)$} \put(200,22){$(0,\,0)$} \put(195,58){$(1/a,\,0)$} \put(196,85){$(\infty,\,\infty)$} 

\put(192,33){$-$}  

\put(180,50){\circle*{2}} \put(180,20){\circle*{2}}  \put(180,60){\circle*{2}} \put(180,90){\circle*{2}}  

\put(250,50){\circle*{2}} \put(250,20){\circle*{2}}  \put(250,60){\circle*{2}} \put(250,90){\circle*{2}}  

\qbezier(180,90)(170,90)(170,75) \qbezier(170,75)(170,60)(180,60)
\qbezier[10](180,90)(190,90)(190,75) \qbezier[10](190,75)(190,60)(180,60)
\qbezier[10](180,50)(190,50)(190,35) \qbezier[10](190,35)(190,20)(180,20)

\qbezier(250,90)(240,90)(240,75) \qbezier(240,75)(240,60)(250,60)
\qbezier[10](250,90)(260,90)(260,75) \qbezier[10](260,75)(260,60)(250,60)
\qbezier[10](250,50)(260,50)(260,35) \qbezier[10](260,35)(260,20)(250,20)

\put(130,90){(i)} \put(60,35){(ii)} \put(300,90){(ii)}

\qbezier(180,90)(100,90)(100,15) \qbezier(250,90)(330,90)(330,15) 

\qbezier(180,60)(150,55)(180,50) \qbezier(250,60)(280,55)(250,50) 

\qbezier(180,20)(150,15)(180,10) \qbezier(250,20)(280,15)(250,10)

\put(125,37){$C'_4$} 

\put(35,60){$C_2$} \put(13,104){$C'_4$} \put(35,15){$j(\varphi^2 (C'_4))$} \put(150,54){$C_2$} \put(270,54){$j(C_2)$} 
\put(260,28){$j(\varphi^2 (C'_4))$} 

\put(60,90){{\rm (i)}}

\qbezier(180,50)(170,50)(170,35) \qbezier(170,35)(170,20)(180,20) 

\qbezier(250,50)(240,50)(240,35) \qbezier(240,35)(240,20)(250,20)

\thicklines

\qbezier(180,60)(150,55)(180,50) \put(168,53){\line(1,-2){2}} \put(168,53){\line(5,1){5}} 

\qbezier(250,60)(280,55)(250,50) \put(262,53){\line(-1,-2){2}} \put(262,53){\line(-5,1){5}} 

\qbezier(180,90)(120,90)(120,30) \put(123,55){\line(0,1){8}} \put(123,55){\line(1,1){6}}
\qbezier(120,30)(120,20)(180,20) 

\qbezier(250,90)(310,90)(310,30) \put(307,55){\line(0,1){8}} \put(307,55){\line(-1,1){6}}
\qbezier(310,30)(310,20)(250,20) 

\put(10,70){\line(1,0){20}} \put(10,50){\line(1,0){20}} 

\put(30,74){\line(1,0){30}} \put(50,74){\line(-2,1){5}} \put(50,74){\line(-2,-1){5}}

\put(30,105){\line(-20,-36){20}} \put(30,105){\line(20,36){8}}

\put(20,87){\line(0,1){6}} \put(20,87){\line(3,2){5}} 

\put(30,15){\line(-20,36){20}} \put(30,15){\line(20,-36){8}}

\put(20,32){\line(0,-1){6}} \put(20,32){\line(3,-2){5}} 

\end{picture}

$\,$\\
The previous arguments as in \S~\ref{H-canonical} imply that $C_2 \cup \{-j(C_2)\}$ is homotopic to 
$(-C'_4) \cup j (\varphi^2(C'_4))$. 
Hence we may consider $(-C_4) \cup (-C''_3) \cup \varphi^2 (C_4)$ and $(-C'_4) \cup j (\varphi^2(C'_4))$ 
instead of $C_1 \cup \{-j(C_1)\}$ and $C_2 \cup \{-j(C_2)\}$, respectively. 

Setting $x=(z+1/z)/2=\cos t:1/2\to 1\to 1/2$ along $C''_3$, we find 
\[
\dfrac{z^2-1}{2i z} =\sin t = 
\begin{cases}
-\sqrt{1-x^2} & \left( t:-\frac{\pi}{3} \to 0 \right) \\
\sqrt{1-x^2} &  \left( t: 0\to \frac{\pi}{3} \right)
\end{cases}
\]
It follows from \eqref{H-branch1} that 
\begin{align*}
&\int_{C''_3} \dfrac{1-z^2}{w}dz =-2i \int_{\frac{1}{2}}^1 \dfrac{ dx}{\sqrt{a^3+\frac{1}{a^3}+6x-8x^3}} 
 -2i  \int^{\frac{1}{2}}_1 \dfrac{ dx}{\sqrt{a^3+\frac{1}{a^3}+6x-8x^3}}=0, \\
&\int_{C''_3} \dfrac{i(1+z^2)}{w}dz =-2i \int_{\frac{1}{2}}^1 \dfrac{ x }{\sqrt{(a^3+\frac{1}{a^3}+6x-8x^3)(1-x^2)}} dx\\
& +2i \int^{\frac{1}{2}}_1 \dfrac{ x }{\sqrt{(a^3+\frac{1}{a^3}+6x-8x^3)(1-x^2)}} dx 
=-4i \int_{\frac{1}{2}}^1 \dfrac{ x }{\sqrt{(a^3+\frac{1}{a^3}+6x-8x^3)(1-x^2)}} dx, \\
&\int_{C''_3} \dfrac{2 z}{w}dz = -4 \int_{\frac{1}{2}}^1 \dfrac{ dx }{\sqrt{(a^3+\frac{1}{a^3}+6x-8x^3)(1-x^2)}}, \\
&\int_{C''_3} \dfrac{z^4-z^6}{w^3}dz = 0, \\
&\int_{C''_3} \dfrac{i(z^4+z^6)}{w^3}dz = 4 i \int_{\frac{1}{2}}^1 \dfrac{x}{\sqrt{ a^3+\frac{1}{a^3}+6x-8x^3 }^3 \sqrt{1-x^2}} dx,  
\end{align*}
\begin{align*}
&\int_{C''_3} \dfrac{z^5}{w^3}dz = 2 \int_{\frac{1}{2}}^1 \dfrac{dx}{\sqrt{ a^3+\frac{1}{a^3}+6x-8x^3 }^3 \sqrt{1-x^2}}. 
\end{align*}
Straightforward calculation yields 
\begin{align*}
& \int_{C_4} \dfrac{i(1+z^2)}{w}dz = \dfrac{1}{2} \int_0^1 \dfrac{1+t^2}{\sqrt{t(t^3+a^3)(t^3+\frac{1}{a^3})}} dt 
  -\dfrac{\sqrt{3}}{2} i \int_0^1 \dfrac{1-t^2}{\sqrt{t(t^3+a^3)(t^3+\frac{1}{a^3})}} dt, \\
&\int_{C_4} \dfrac{2 z}{w}dz =  -2 i \int_0^1 \dfrac{t}{\sqrt{t(t^3+a^3)(t^3+\frac{1}{a^3})}} dt,  \\
& \int_{C_4} \dfrac{z^4-z^6}{w^3}dz = \dfrac{i}{2} \int_0^1\dfrac{t^4-t^6}{\sqrt{t(t^3+a^3)(t^3+\frac{1}{a^3})}^3}dt 
 +\dfrac{\sqrt{3}}{2}\int_0^1 \dfrac{t^4+t^6}{\sqrt{t(t^3+a^3)(t^3+\frac{1}{a^3})}^3}dt, \\
& \int_{C_4} \dfrac{i(z^4+z^6)}{w^3}dz = \dfrac{\sqrt{3}}{2} i \int_0^1\dfrac{t^4-t^6}{\sqrt{t(t^3+a^3)(t^3+\frac{1}{a^3})}^3}dt 
 -\dfrac{1}{2}\int_0^1 \dfrac{t^4+t^6}{\sqrt{t(t^3+a^3)(t^3+\frac{1}{a^3})}^3}dt, \\
&\int_{C_4} \dfrac{z^5}{w^3}dz = i \int_0^1\dfrac{t^5}{\sqrt{t(t^3+a^3)(t^3+\frac{1}{a^3})}^3}dt.
\end{align*}
Combining these equations and \eqref{H-inform3}, we have 
\begin{align}
\int_{(-C_4) \cup (-C''_3) \cup \varphi^2 (C_4)} \dfrac{1-z^2}{w}dz &=  
\sqrt{3} \int_0^1 \dfrac{1+t^2}{\sqrt{t(t^3+a^3)(t^3+\frac{1}{a^3})}} dt,   \label{H-key-integral1}  \\
\nonumber \int_{(-C_4) \cup (-C''_3) \cup \varphi^2 (C_4)} \dfrac{i( 1+z^2)}{w}dz &=    
\sqrt{3} i  \int_0^1 \dfrac{1-t^2}{\sqrt{t(t^3+a^3)(t^3+\frac{1}{a^3})}} dt  \\
& +4i \int_{\frac{1}{2}}^1 \dfrac{ x }{\sqrt{(a^3+\frac{1}{a^3}+6x-8x^3)(1-x^2)}} dx, \label{H-key-integral2} \\
\int_{(-C_4) \cup (-C''_3) \cup \varphi^2 (C_4)} \dfrac{2 z}{w}dz &=   
4 \int_{\frac{1}{2}}^1 \dfrac{ d x }{\sqrt{(a^3+\frac{1}{a^3}+6x-8x^3)(1-x^2)}},  \label{H-key-integral3}   \\
\int_{(-C_4) \cup (-C''_3) \cup \varphi^2 (C_4)} \dfrac{z^4-z^6}{w^3}dz &=
-\sqrt{3} \int_0^1 \dfrac{t^4+t^6}{\sqrt{t(t^3+a^3)(t^3+\frac{1}{a^3})}^3}dt, \label{H-key-integral4}  \\
\nonumber \int_{(-C_4) \cup (-C''_3) \cup \varphi^2 (C_4)} \dfrac{i(z^4+z^6)}{w^3}dz &= 
-\sqrt{3} i \int_0^1 \dfrac{t^4-t^6}{\sqrt{t(t^3+a^3)(t^3+\frac{1}{a^3})}^3}dt 
\end{align}
\begin{align}
& -4 i \int_{\frac{1}{2}}^1 \dfrac{x}{\sqrt{ a^3+\frac{1}{a^3}+6x-8x^3 }^3 \sqrt{1-x^2}} dx,  \label{H-key-integral5}  \\
\int_{(-C_4) \cup (-C''_3) \cup \varphi^2 (C_4)} \dfrac{z^5}{w^3}dz &= 
-2 \int_{\frac{1}{2}}^1 \dfrac{dx}{\sqrt{ a^3+\frac{1}{a^3}+6x-8x^3 }^3 \sqrt{1-x^2}}. \label{H-key-integral6} 
\end{align}
Similarly we obtain 
\begin{align*}
&\int_{C'_4} \dfrac{1-z^2}{w}dz = -\dfrac{\sqrt{3}}{2} \int_0^{\infty} \dfrac{ 1+t^2 }{\sqrt{ t(t^3+a^3)(t^3+\frac{1}{a^3}) }}dt 
 -\dfrac{i}{2} \int_0^{\infty} \dfrac{ 1-t^2 }{\sqrt{ t(t^3+a^3)(t^3+\frac{1}{a^3}) }}dt    \\
&= -\sqrt{3} \int_0^1 \dfrac{ 1+t^2 }{\sqrt{ t(t^3+a^3)(t^3+\frac{1}{a^3}) }}dt, \\
&\int_{C'_4} \dfrac{i(1+z^2)}{w}dz =  \int_0^1 \dfrac{ 1+t^2 }{\sqrt{ t(t^3+a^3)(t^3+\frac{1}{a^3}) }}dt,  \\
&\int_{C'_4} \dfrac{ 2 z}{w}dz =  -4 i \int_0^1 \dfrac{ t }{\sqrt{ t(t^3+a^3)(t^3+\frac{1}{a^3}) }}dt,  \\
&\int_{C'_4} \dfrac{z^4-z^6}{w^3}dz =   \sqrt{3} \int_0^1 \dfrac{ t^4+t^6 }{\sqrt{ t(t^3+a^3)(t^3+\frac{1}{a^3}) }^3}dt,  \\
&\int_{C'_4} \dfrac{i(z^4+z^6)}{w^3}dz =  -\int_0^1 \dfrac{ t^4+t^6 }{\sqrt{ t(t^3+a^3)(t^3+\frac{1}{a^3}) }^3}dt,  \\
&\int_{C'_4} \dfrac{z^5}{w^3}dz = 2 i \int_0^1 \dfrac{ t^5 }{\sqrt{ t(t^3+a^3)(t^3+\frac{1}{a^3}) }^3}dt.     
\end{align*}
Hence we find 
\begin{align}
\int_{(-C'_4) \cup j (\varphi^2(C'_4))} \dfrac{1-z^2}{w}dz &= 0, \label{H-key-integral7} \\
\int_{(-C'_4) \cup j (\varphi^2(C'_4))} \dfrac{i(1+z^2)}{w}dz &= -2 \int_0^1 \dfrac{ 1+t^2 }{\sqrt{ t(t^3+a^3)(t^3+\frac{1}{a^3}) }}dt, 
\label{H-key-integral8}      \\ 
\int_{(-C'_4) \cup j (\varphi^2(C'_4))} \dfrac{ 2 z}{w}dz &=  8 i \int_0^1 \dfrac{ t }{\sqrt{ t(t^3+a^3)(t^3+\frac{1}{a^3}) }}dt, 
\label{H-key-integral9}  \\
\int_{(-C'_4) \cup j (\varphi^2(C'_4))} \dfrac{z^4-z^6}{w^3}dz &=  0, \label{H-key-integral10} \\
\int_{(-C'_4) \cup j (\varphi^2(C'_4))} \dfrac{i(z^4+z^6)}{w^3}dz &=  2 \int_0^1 \dfrac{ t^4+t^6 }{\sqrt{ t(t^3+a^3)(t^3+\frac{1}{a^3}) }^3}dt, 
\label{H-key-integral11} 
\end{align}
\begin{align}
\int_{(-C'_4) \cup j (\varphi^2(C'_4))} \dfrac{z^5}{w^3}dz &= -4 i \int_0^1 \dfrac{ t^5 }{\sqrt{ t(t^3+a^3)(t^3+\frac{1}{a^3}) }^3}dt. 
\label{H-key-integral12}   
\end{align}
Set 
\begin{align*}
A &:= \int_0^1 \dfrac{1+t^2}{\sqrt{t(t^3+a^3)(t^3+\frac{1}{a^3})}} dt, \\ 
B &:= \sqrt{3} \int_0^1 \dfrac{1-t^2}{\sqrt{t(t^3+a^3)(t^3+\frac{1}{a^3})}} dt
+ 4\int_{\frac{1}{2}}^1 \dfrac{x}{\sqrt{ (a^3+\frac{1}{a^3}+6x-8x^3)(1-x^2) }} dx , \\
C &:= 4 \int_{\frac{1}{2}}^1 \dfrac{dx}{\sqrt{ (a^3+\frac{1}{a^3}+6x-8x^3)(1-x^2) }}, \\
D &:= 8\int_0^1 \dfrac{t}{\sqrt{t(t^3+a^3)(t^3+\frac{1}{a^3})}} dt, \quad 
E :=  \int_0^1 \dfrac{t^4+t^6}{\sqrt{t(t^3+a^3)(t^3+\frac{1}{a^3})}^3} dt, \\
F &:= \sqrt{3} \int_0^1 \dfrac{t^4-t^6}{\sqrt{t(t^3+a^3)(t^3+\frac{1}{a^3})}^3} dt
+ 4\int_{\frac{1}{2}}^1 \dfrac{x}{\sqrt{ a^3+\frac{1}{a^3}+6x-8x^3}^3\sqrt{1-x^2}} dx, \\
H &:= 2 \int_{\frac{1}{2}}^1 \dfrac{dx}{\sqrt{ a^3+\frac{1}{a^3}+6x-8x^3}^3\sqrt{1-x^2}}, \quad 
I := 4 \int_0^1 \dfrac{t^5}{\sqrt{t(t^3+a^3)(t^3+\frac{1}{a^3})}^3} dt. 
\end{align*}
Using \eqref{H-key-integral1}--\eqref{H-key-integral12}, we have 
\begin{align}
\int_{(-C_4) \cup (-C''_3) \cup \varphi^2 (C_4)} G
=
i\begin{pmatrix}
\sqrt{3} A \\
i B \\
C \\
-\sqrt{3} E \\
-i F \\
-H
\end{pmatrix},\quad 
\int_{(-C'_4) \cup j (\varphi^2(C'_4))} G
=i
\begin{pmatrix}
0 \\
-2 A \\
i D \\
0 \\
2 E \\
-i I
\end{pmatrix}. 
\end{align}
Therefore, the period matrix of the abelian differentials of the second kind, that is, 
$
\begin{pmatrix}
\displaystyle{\int}_{A_1} G & \displaystyle{\int_{A_2}} G & \displaystyle{\int_{A_3}} G & 
\displaystyle{\int_{B_1}} G & \displaystyle{\int_{B_2}} G & \displaystyle{\int_{B_3}} G  
\end{pmatrix}
$ is given by 
\[
i
\begin{pmatrix}
0  &  \dfrac{\sqrt{3}}{2} (A+i B)  &   0  &  -\sqrt{3} A  &  -2\sqrt{3} A  &  -\sqrt{3} A  \\
2 A  &   \dfrac{-3 A+ i B}{2}   &  A- i B  & i B  &  0   &  A  \\
-i D  &   -C  &   2C+i D  &  C  &  0  &  i D  \\
0  &  -\dfrac{\sqrt{3}}{2} (E+i F)   &  0  &  \sqrt{3} E  &  2\sqrt{3} E  &  \sqrt{3} E  \\
-2 E  &   \dfrac{3 E- i F}{2}   &   -E+iF  &  -i F  &  0  &  -E  \\
i I  &  H   &   -2H-i I   &  -H  &  0  &  -i I
\end{pmatrix}.
\]

The arguments in next subsections are based on the above techniques, so we omit some details. 

\subsection{rPD family}\label{rPD-detail}

\subsubsection{Canonical homology basis}\label{rPD-canonical}

Let $M$ be a hyperelliptic Riemann surface of genus $3$ defined as the completion of 
$\{(z,\,w)\,|\, w^2=z(z^3-a^3) \left( z^3+\frac{1}{a^3} \right)\} \subset \mathbb{C}^2$ for $a\in (0,\,1]$. 
The three differentials 
\[\dfrac{dz}{w},\,z\dfrac{dz}{w},\,z^2\dfrac{dz}{w}\]
form a basis for the abelian differentials of the first kind. 
Up to exact forms, the abelian differentials of the second kind are given by the following six differentials: 
\[\dfrac{dz}{w},\,z\dfrac{dz}{w},\,z^2\dfrac{dz}{w},\,\dfrac{z^4}{w^3}dz,\,\dfrac{z^5}{w^3}dz,\,\dfrac{z^6}{w^3}dz.\]

Let 
\[G=i \left(\dfrac{1-z^2}{w},\,\dfrac{i(1+z^2)}{w},\,\dfrac{2z}{w},\,
\dfrac{z^4-z^6}{w^3},\,\dfrac{i(z^4+z^6)}{w^3},\,\dfrac{z^5}{w^3}\right)^t dz \]
and consider the biholomorphisms 
\begin{align*}
j(z,\,w)&=(z,\,-w),\>\>\varphi(z,\,w)=(e^{\frac{2\pi}{3}i}z,\,e^{\frac{\pi}{3}i}w)
\end{align*}
on $M$. Then it is straightforward to compute that 
\begin{align*}
j^{*}G=-G, \quad 
\varphi^{*}G & =
\begin{pmatrix}
\dfrac{1}{2} & \dfrac{\sqrt{3}}{2}  & 0 & 0 & 0 & 0 \\
-\dfrac{\sqrt{3}}{2} & \dfrac{1}{2}  & 0 & 0 & 0 & 0 \\
0 & 0 & -1 & 0 & 0 & 0 \\
0 & 0 & 0 & \dfrac{1}{2} & \dfrac{\sqrt{3}}{2}  & 0 \\
0 & 0 & 0 & -\dfrac{\sqrt{3}}{2} & \dfrac{1}{2}  & 0 \\
0 & 0 & 0 & 0 & 0 & -1 
\end{pmatrix}G.  
\end{align*}
Now we determine a canonical homology basis on $M$. Recall that 
\begin{align*}
\pi_{{\rm rPD}}:&\quad M \longrightarrow \overline{\mathbb{C}}:=\mathbb{C} \cup \{\infty\} \\
& (z,\,w) \> \longmapsto \> z
\end{align*}
defines a two-sheeted branched covering having branch locus
\[
\{ (0,\,0),\>(\infty,\,\infty),\> (a,\,0),\> (e^{\pm \frac{2}{3}\pi i}a,\,0),\>  
(-1/a,\,0),\> (e^{\pm \frac{\pi}{3} i}/a,\,0) \}
\]
and $j$ is its deck transformation. 
So $M$ can be expressed as a $2$-sheeted branched cover of $\overline{\mathbb{C}}$ in the following way. 

\begin{picture}(340,150)

\put(10,90){\line(1,0){140}} \put(170,90){\line(1,0){140}}
\put(70,30){\line(0,1){120}} \put(230,30){\line(0,1){120}}

\put(70,90){\circle*{2}} \put(90,90){\circle*{2}} \put(20,90){\circle*{2}} 
\put(57,108){\circle*{2}} \put(110,131){\circle*{2}} \put(57,72){\circle*{2}} \put(110,49){\circle*{2}} 

\put(230,90){\circle*{2}} \put(250,90){\circle*{2}} \put(180,90){\circle*{2}} 
\put(217,108){\circle*{2}} \put(270,131){\circle*{2}} \put(217,72){\circle*{2}} \put(270,49){\circle*{2}} 
 
\put(60,80){$O$} \put(88,94){$a$} \put(0,96){$-1/a$} \put(35,112){$e^{\frac{2}{3} \pi i} a$} \put(85,135){$e^{\frac{\pi}{3} i} /a$}
\put(25,60){$e^{-\frac{2}{3} \pi i} a$} \put(80,40){$e^{-\frac{\pi}{3} i} /a$}

\put(110,131){\line(-53,-23){53}} \put(20,90){\line(37,-18){37}} \put(110,49){\line(1,-1){15}} 
\put(270,131){\line(-53,-23){53}} \put(180,90){\line(37,-18){37}} \put(270,49){\line(1,-1){15}} 

\put(90,105){$+$} \put(80,110){$-$} \put(115,94){$+$} \put(115,82){$-$} \put(30,100){$+$} \put(35,92){$-$} 
\put(80,65){$-$} \put(73,52){$+$} 
\put(250,105){$+$} \put(240,110){$-$} \put(275,94){$+$} \put(275,82){$-$} \put(190,100){$+$} \put(195,92){$-$} 
\put(240,65){$-$} \put(233,52){$+$} 

\put(120,140){{\rm (i) }} \put(280,140){{\rm (ii) }}

\put(143,96){$(\infty)$} \put(303,96){$(\infty)$} \put(15,23){$(\infty)$} \put(175,23){$(\infty)$} 

\put(130,5){{\rm \textbf{figure (rPD)}}}

\thicklines

\put(90,90){\line(1,0){60}} \put(70,90){\line(40,41){40}} \put(57,108){\line(-37,-18){37}} \put(57,72){\line(53,-23){53}} 
\put(250,90){\line(1,0){60}} \put(230,90){\line(40,41){40}} \put(217,108){\line(-37,-18){37}} \put(217,72){\line(53,-23){53}} 
 
\end{picture}
$\,$\\
We prepare two copies of $\overline{\mathbb{C}}$ and slit them along the thick lines in {\rm figure (rPD)}. 
Identifying each of the upper (resp. lower) edges of the thick lines in {\rm (i)} with 
each of the lower (resp. upper) edges of the thick lines in (ii), we obtain the hyperelliptic Riemann surface $M$ of genus $3$ 
(see the following figure). 
Note that each of thin lines joining two branch points in {\rm figure (rPD)} is corresponding to each of thick lines 
joining two branch points in the following figure. \\

\begin{picture}(340,160)

\put(135,148){$(\infty,\infty)$} \put(138,122){$(a,\,0)$} \put(138,105){$(0,\,0)$} 
\put(130,82){$(e^{\frac{\pi}{3} i}/a,\,0)$} \put(130,65){$(e^{\frac{2}{3}\pi i} a ,\,0)$} \put(130,40){$(-1/a,\,0)$}
\put(130,24){$(e^{-\frac{2}{3} \pi i} a ,\,0)$} \put(130,0){$(e^{-\frac{\pi}{3} i}/a,\,0)$} 

\put(122,134){$-$} \put(122,93){$-$} \put(122,53){$-$} \put(122,13){$-$} 
\put(212,133){$-$} \put(212,93){$-$} \put(212,53){$-$} \put(212,13){$-$} 

\put(90,133){$+$} \put(90,93){$+$} \put(90,53){$+$} \put(90,13){$+$} 
\put(180,133){$+$} \put(180,93){$+$} \put(180,53){$+$} \put(180,13){$+$} 

\put(110,150){\circle*{2}} \put(110,120){\circle*{2}} \put(110,110){\circle*{2}} \put(110,80){\circle*{2}} 
\put(110,70){\circle*{2}} \put(110,40){\circle*{2}} \put(110,30){\circle*{2}} \put(110,0){\circle*{2}}
\put(200,150){\circle*{2}} \put(200,120){\circle*{2}} \put(200,110){\circle*{2}} \put(200,80){\circle*{2}} 
\put(200,70){\circle*{2}} \put(200,40){\circle*{2}} \put(200,30){\circle*{2}} \put(200,0){\circle*{2}}

\qbezier(110,150)(100,150)(100,135) \qbezier(100,135)(100,120)(110,120)
\qbezier[10](110,150)(120,150)(120,135) \qbezier[10](120,135)(120,120)(110,120)

\qbezier(110,110)(100,110)(100,95) \qbezier(100,95)(100,80)(110,80)
\qbezier[10](110,110)(120,110)(120,95) \qbezier[10](120,95)(120,80)(110,80)

\qbezier(110,70)(100,70)(100,55) \qbezier(100,55)(100,40)(110,40)
\qbezier[10](110,70)(120,70)(120,55) \qbezier[10](120,55)(120,40)(110,40)

\qbezier(110,30)(100,30)(100,15) \qbezier(100,15)(100,0)(110,0)
\qbezier[10](110,30)(120,30)(120,15) \qbezier[10](120,15)(120,0)(110,0)

\qbezier(200,150)(190,150)(190,135) \qbezier(190,135)(190,120)(200,120)
\qbezier[10](200,150)(210,150)(210,135) \qbezier[10](210,135)(210,120)(200,120)

\qbezier(200,110)(190,110)(190,95) \qbezier(190,95)(190,80)(200,80)
\qbezier[10](200,110)(210,110)(210,95) \qbezier[10](210,95)(210,80)(200,80)

\qbezier(200,70)(190,70)(190,55) \qbezier(190,55)(190,40)(200,40)
\qbezier[10](200,70)(210,70)(210,55) \qbezier[10](210,55)(210,40)(200,40)

\qbezier(200,30)(190,30)(190,15) \qbezier(190,15)(190,0)(200,0)
\qbezier[10](200,30)(210,30)(210,15) \qbezier[10](210,15)(210,0)(200,0)

\put(50,150){(i)} \put(245,150){(ii)}

\thicklines

\qbezier(110,150)(30,150)(30,75) \qbezier(30,75)(30,0)(110,0)

\qbezier(200,150)(280,150)(280,75) \qbezier(280,75)(280,0)(200,0)

\qbezier(110,120)(80,115)(110,110) \qbezier(110,80)(80,75)(110,70)
\qbezier(110,40)(80,35)(110,30) 

\qbezier(200,120)(230,115)(200,110) \qbezier(200,80)(230,75)(200,70)
\qbezier(200,40)(230,35)(200,30) 

\end{picture}

$\,$\\
To describe $1$-cycles on $M$, we consider the following key paths:
\begin{align*}
C_1 &= \{(z,\,w)=(e^{\frac{\pi}{3}i} t/a,\, i e^{\frac{\pi}{6}i} \sqrt{t(1-t^3)(a^3+t^3 / a^3)} /a^2) \>|\>t: 0\to 1,\> \sqrt{*}>0 \}, \\
C_2 &= \{(z,\,w)=(e^{\frac{\pi}{3}i} t/a,\, - e^{\frac{\pi}{6}i} \sqrt{t(t^3-1)(a^3+t^3 / a^3)}/a^2 )\>|\>t: 1\to \infty ,\> \sqrt{*}>0 \}, \\
C_3 &= \{(z,\,w)=(a t,\, ia^2 \sqrt{t(1-t^3)(a^3t^3+1/a^3)})\>|\>t: 0 \to 1,\> \sqrt{*}>0 \}, \\
C_4 &= \{(z,\,w)=(a t ,\, a^2 \sqrt{t(t^3-1)(a^3 t^3+1/a^3)})\>|\>t:1\to \infty, \> \sqrt{*}>0\}. 
\end{align*}
We first choose $C_1$ in the following figure. After that, we shall see that $C_1\cup C_2$ is homotopic to $C_3 \cup C_4 $ 
by using path-integrals of the holomorphic differentials $\dfrac{2z}{w}dz$ and $\dfrac{1-z^2}{w} dz$. 

\begin{picture}(340,100)

\put(10,10){\line(1,0){140}} 
\put(30,0){\line(0,1){80}} 

\put(30,10){\circle*{2}} \put(50,10){\circle*{2}}  

\put(95,2){$-$} \put(10,60){{\rm (i) }} \put(40,0){$C_3$} \put(110,0){$C_4$} 
\put(64,30){$C_1$} \put(92,58){$C_2$} \put(40,30){$-$} 

\put(245,35){$C_1$} \put(205,15){$C_2$} \put(242,52){$C_3$} \put(245,70){$C_4$} 

\put(285,90){$(\infty,\,\infty)$} \put(290,60){$(a,\,0)$}
\put(290,45){$(0,\,0)$} \put(283,22){$(e^{\frac{\pi}{3} i}/a,\,0)$}

\put(270,50){\circle*{2}} \put(270,20){\circle*{2}} 

\put(30,10){\line(40,41){40}} \put(70,51){\circle*{2}} 
\put(70,51){\line(40,41){20}}

\thicklines

\put(50,10){\line(1,0){100}} 

\put(40,14){\line(1,0){20}} \put(53,14){\line(-2,1){5}} \put(53,14){\line(-2,-1){5}}
\put(60,14){\line(1,0){90}} \put(113,14){\line(-2,1){5}} \put(113,14){\line(-2,-1){5}}

\thinlines

\qbezier(270,90)(260,90)(260,75) \qbezier(260,75)(260,60)(270,60)
\qbezier[10](270,90)(280,90)(280,75) \qbezier[10](280,75)(280,60)(270,60)

\qbezier[10](270,50)(280,50)(280,35) \qbezier[10](280,35)(280,20)(270,20)

\put(210,90){(i)} 

\qbezier(270,90)(190,90)(190,15) \qbezier(270,60)(240,55)(270,50) \qbezier(270,20)(240,15)(270,10)

\thicklines

\put(30,10){\line(40,41){40}} \put(59.5,34){\line(-2,-1){7}} \put(59.5,34){\line(-1,-3){2}} 
\put(92,67.5){\line(-2,-1){7}} \put(92,67.5){\line(-1,-3){2}}
\put(40,14){\line(40,41){60}}

\qbezier(270,90)(260,90)(260,75) \qbezier(260,75)(260,60)(270,60)
\put(260,78){\line(-1,-2){3}} \put(260,78){\line(1,-2){3}} 

\qbezier(270,50)(260,50)(260,35) \qbezier(260,35)(260,20)(270,20)
\put(260,32){\line(-1,2){3}} \put(260,32){\line(1,2){3}} 

\qbezier(270,60)(240,55)(270,50) \put(258,53){\line(1,-2){2}} \put(258,53){\line(5,1){5}} 

\qbezier(270,90)(210,90)(210,30) \put(213,55){\line(1,-3){2.5}} \put(213,55){\line(-1,-1){6}}
\qbezier(210,30)(210,20)(270,20) 

\end{picture}

$\,$\\ 
Taking suitable $l,\,m,\,n\in \{0,\,1\}$, we find that $C_1\cup j^l (C_2)$ is homotopic to $j^m (C_3) \cup j^n (C_4)$. 
Now we have 
\begin{align}
\int_{C_1} \dfrac{2z}{w} dz &= 2\int_0^1 \dfrac{t}{\sqrt{t(1-t^3)(a^3+\frac{t^3}{a^3})}} dt 
= 2\int_1^{\infty} \dfrac{t}{\sqrt{t(t^3-1)(a^3 t^3+\frac{1}{a^3})}} dt, \label{eq-rPD31} \\
\nonumber \int_{C_2} \dfrac{2z}{w} dz &= -2 i \int_0^1 \dfrac{t}{\sqrt{t(1-t^3)(a^3 t^3+\frac{1}{a^3})}} dt, \\
\int_{C_3} \dfrac{2z}{w} dz &=-2 i \int_0^1 \dfrac{t}{\sqrt{t(1-t^3)(a^3 t^3+\frac{1}{a^3})}} dt, \label{eq-rPD32} \\
\nonumber \int_{C_4} \dfrac{2z}{w} dz &= 2\int_1^{\infty} \dfrac{t}{\sqrt{t(t^3-1)(a^3 t^3+\frac{1}{a^3})}} dt. 
\end{align}
It follows from the equation $\int_{C_1\cup j^l (C_2)} \frac{2z}{w} dz=\int_{j^m (C_3) \cup j^n (C_4)} \frac{2z}{w} dz$ that 
$n=0$ and $l=m$. Similarly, we find 
\begin{align}
&\int_{C_1} \dfrac{1-z^2}{w} dz = -\dfrac{\sqrt{3} }{2 a} i \int_1^{\infty} \dfrac{1+a^2 t^2}{\sqrt{t(t^3-1)(a^3 t^3+\frac{1}{a^3})}} dt
-\dfrac{1}{2 a} \int_1^{\infty} \dfrac{1-a^2 t^2}{\sqrt{t(t^3-1)(a^3 t^3+\frac{1}{a^3})}} dt, \label{eq-rPD11} \\
\nonumber &\int_{C_2} \dfrac{1-z^2}{w} dz = 
-\dfrac{\sqrt{3} }{2 a} \int_0^1 \dfrac{1+a^2 t^2}{\sqrt{t(1-t^3)(a^3 t^3+\frac{1}{a^3})}} dt
+\dfrac{i}{2 a} \int_0^1 \dfrac{1-a^2 t^2}{\sqrt{t(1-t^3)(a^3 t^3+\frac{1}{a^3})}} dt, \\
&\int_{C_3} \dfrac{1-z^2}{w} dz =-\dfrac{i}{a} \int_0^1 \dfrac{1-a^2 t^2}{\sqrt{t(1-t^3)(a^3 t^3+\frac{1}{a^3})}} dt, \label{eq-rPD12} \\
\nonumber &\int_{C_4} \dfrac{1-z^2}{w} dz = \dfrac{1}{a} \int_1^{\infty} \dfrac{1-a^2 t^2}{\sqrt{t(t^3-1)(a^3 t^3+\frac{1}{a^3})}} dt. 
\end{align}
The equation $\int_{C_1\cup j^m (C_2)} \frac{1-z^2}{w} dz=\int_{j^m (C_3) \cup C_4} \frac{1-z^2}{w} dz$ implies $m=0$. 
Thus, $C_1\cup C_2$ is homotopic to $C_3 \cup C_4$. 
In the process, we have also showed that the two equations 
\begin{align}
\sqrt{3} \int_0^1 \dfrac{1-a^2 t^2}{\sqrt{t(1-t^3)(a^3 t^3 + \frac{1}{a^3} )}}dt &= 
\int_1^{\infty} \dfrac{1+ a ^2 t^2}{\sqrt{t(t^3-1)(a^3 t^3+\frac{1}{a^3})}}dt, \label{eq-rPD100}\\
\sqrt{3} \int_1^{\infty} \dfrac{1-a^2 t^2}{\sqrt{t(t^3-1)(a^3 t^3 + \frac{1}{a^3} )}}dt &= 
-\int_0^1 \dfrac{1+ a ^2 t^2}{\sqrt{t(1-t^3)(a^3 t^3+\frac{1}{a^3})}}dt. \label{eq-rPD101}
\end{align}
Next we shall show that $j(C_1) \cup C_2$ is homotopic to $j(\varphi(C_3)) \cup \varphi (C_4)$. 
Choosing suitable $m,\,n \in \{ 0,\,1 \}$, we obtain that $j(C_1) \cup C_2$ is homotopic to 
$j^m (\varphi(C_3)) \cup j^n (\varphi (C_4))$. 
The equation $\int_{j(C_1) \cup C_2} \frac{2 z}{w} dz=\int_{j^m (\varphi(C_3)) \cup j^n (\varphi (C_4))} \frac{2 z}{w} dz$ 
yields that $m=1$ and $n=0$. Hence we have the following figure. 

\begin{picture}(340,110)

\put(10,20){\line(1,0){140}} 
\put(70,0){\line(0,1){90}} 

\put(70,20){\circle*{2}} \put(90,20){\circle*{2}} \put(20,20){\circle*{2}} 
\put(57,38){\circle*{2}} \put(110,61){\circle*{2}} 
 
\put(60,10){$O$} \put(88,24){$a$} \put(0,26){$-1/a$} \put(25,42){$e^{\frac{2}{3} \pi i} a$} \put(115,55){$e^{\frac{\pi}{3} i} /a$}

\put(110,61){\line(-53,-23){53}}

\put(90,35){$+$} \put(115,24){$+$} \put(115,12){$-$} \put(30,30){$+$} \put(35,22){$-$} 

\put(10,70){{\rm (i) }} 

\thicklines

\put(90,20){\line(1,0){60}} \put(70,20){\line(40,41){40}} \put(57,38){\line(-37,-18){37}} 

\put(70,26){\line(40,41){55}} \put(89.5,46){\line(-2,-1){6}} \put(89.5,46){\line(-1,-3){2}} 
\put(110,67){\line(-2,-1){6}} \put(110,67){\line(-1,-3){2}} 
\put(70,26){\line(-13,18){35}} \put(64,34){\line(0,-1){5}} \put(64,34){\line(3,-1){5}} 
\put(44,62){\line(0,-1){5}} \put(44,62){\line(3,-1){5}}

\thinlines 

\put(285,45){$j(C_1)$} \put(225,40){$C_2$} \put(220,7){$j(\varphi (C_3))$} \put(192,30){$\varphi( C_4)$} 

\put(270,60){\circle*{2}} \put(270,30){\circle*{2}} \put(270,20){\circle*{2}} 

\qbezier(270,100)(260,100)(260,85) \qbezier(260,85)(260,70)(270,70)
\qbezier[10](270,100)(280,100)(280,85) \qbezier[10](280,85)(280,70)(270,70)

\qbezier[10](270,60)(280,60)(280,45) \qbezier[10](280,45)(280,30)(270,30)

\put(210,100){(i)} 

\qbezier(270,100)(190,100)(190,25) \qbezier(270,70)(240,65)(270,60) \qbezier(270,30)(240,25)(270,20)
\qbezier(270,20)(260,20)(260,5) \qbezier[12](270,20)(280,20)(280,5) 

\qbezier(270,60)(260,60)(260,45) \qbezier(260,45)(260,30)(270,30)

\thicklines
 
\qbezier(270,60)(280,60)(280,45) \qbezier(280,45)(280,30)(270,30)
\put(280,42){\line(-1,2){3}} \put(280,42){\line(1,2){3}} 

\qbezier(270,100)(220,100)(220,40) \qbezier(220,40)(220,30)(270,30) 
\put(223,65){\line(1,-3){2.5}} \put(223,65){\line(-1,-1){6}}
 
\qbezier[20](270,60)(230,40)(240,20) \qbezier[10](240,20)(250,15)(270,20) 
\qbezier[3](240,20)(236,21)(232,22) \qbezier[4](240,20)(242,24)(244,28)
 
\qbezier(210,60)(210,20)(270,20) \qbezier(210,60)(220,100)(270,100) 
\put(210,60){\line(-1,-3){3}} \put(210,60){\line(1,-2){4}} 
 
\end{picture}

$\,$

\begin{picture}(340,170)

\put(10,130){\line(1,0){140}} 
\put(20,10){\line(0,1){140}} 

\put(50,130){\circle*{2}} \put(20,130){\circle*{2}} \put(80,59){\circle*{2}} 
 
\put(10,120){$O$} \put(54,134){$a$} \put(40,54){$e^{-\frac{\pi}{3} i} /a$}

\put(80,59){\line(1,-1){35}} 

\put(95,134){$+$}  
\put(30,82){$-$} \put(25,72){$+$} 

\put(120,145){{\rm (i) }} 

\thicklines

\put(20,135){\line(1,0){30}} \put(36,135){\line(-2,1){7}}  \put(36,135){\line(-2,-1){7}} \put(36,140){$C_3$}
\put(50,125){\line(1,0){100}} \put(106,125){\line(-2,1){7}}  \put(106,125){\line(-2,-1){7}} \put(110,115){$j(C_4)$} 

\put(30,120){\line(1,-1){56}} \put(60,90){\line(-2,1){10}} \put(60,90){\line(-1,4){3}} \put(69,88){$j( \varphi^2 (C_1) )$}
\put(73,52){\line(1,-1){35}} \put(93,32){\line(-2,1){10}} \put(93,32){\line(-1,4){3}} \put(57,25){$ \varphi^2 (C_2) $}

\put(50,130){\line(1,0){100}} \put(27,82){\line(53,-23){53}} \put(27,82){\line(-53,23){30}}

\thinlines
 
\put(285,132){$j (C_4)$} \put(230,112){$C_3$} 
\put(280,24){$(e^{-\frac{2}{3} \pi i} a ,\,0)$} \put(280,0){$(e^{-\frac{\pi}{3} i}/a,\,0)$} 

\put(274,134){$-$} \put(272,93){$-$} \put(272,53){$-$} \put(272,13){$-$} 

\put(240,133){$+$} \put(240,93){$+$} \put(240,53){$+$} \put(240,13){$+$} 

\put(260,150){\circle*{2}} \put(260,120){\circle*{2}} \put(260,110){\circle*{2}} \put(260,80){\circle*{2}} 
\put(260,70){\circle*{2}} \put(260,40){\circle*{2}} \put(260,30){\circle*{2}} \put(260,0){\circle*{2}}

\qbezier(260,150)(250,150)(250,135) \qbezier(250,135)(250,120)(260,120)
\qbezier[10](260,150)(270,150)(270,135) \qbezier[10](270,135)(270,120)(260,120)

\qbezier(260,110)(250,110)(250,95) \qbezier(250,95)(250,80)(260,80)
\qbezier[10](260,110)(270,110)(270,95) \qbezier[10](270,95)(270,80)(260,80)

\qbezier(260,70)(250,70)(250,55) \qbezier(250,55)(250,40)(260,40)
\qbezier[10](260,70)(270,70)(270,55) \qbezier[10](270,55)(270,40)(260,40)

\qbezier(260,30)(250,30)(250,15) \qbezier(250,15)(250,0)(260,0)
\qbezier[10](260,30)(270,30)(270,15) \qbezier[10](270,15)(270,0)(260,0)

\put(200,150){(i)}

\qbezier(260,120)(230,115)(260,110) \qbezier(260,80)(230,75)(260,70)
\qbezier(260,40)(230,35)(260,30) 

\thicklines
 
\qbezier(260,120)(230,115)(260,110) \put(248,113){\line(1,-2){2}} \put(248,113){\line(5,1){5}} 

\qbezier(260,150)(270,150)(270,135) \qbezier(270,135)(270,120)(260,120)
\put(270,138){\line(-1,-2){3}} \put(270,138){\line(1,-2){3}} 
 
\qbezier[20](260,110)(230,110)(230,55) \qbezier[20](230,55)(230,0)(260,0) 
\qbezier[3](230,55)(228,59)(226,63) \qbezier[3](230,55)(232,59)(234,63) 
\put(185,50){$j(\varphi^2(C_1))$}

\qbezier(260,150)(180,150)(180,75) \qbezier(180,75)(180,0)(260,0)
\put(180,75){\line(-1,-2){4}} \put(180,75){\line(1,-2){4}} \put(147,78){$\varphi^2(C_2)$}
 
\end{picture}

$\,$\\
Moreover, it follows from $j(C_1) \cup C_2 \sim j(\varphi(C_3)) \cup \varphi (C_4)$ that 
$ j ( \varphi^2 (C_1) ) \cup \varphi^2 (C_2) \sim C_3 \cup j (C_4)$ and 
$ \varphi (C_1) \cup j(\varphi (C_2)) \sim \varphi^2 (C_3) \cup j (\varphi^2(C_4))$. 
From the first relation, we find the above figure. 

We shall write an accurate figure from the second relation. 
Choosing a suitable $m \in \{0,\,1\}$, we obtain the following figure. 

\begin{picture}(340,170)

\put(10,100){\line(1,0){120}} 
\put(120,10){\line(0,1){110}} 

\put(50,100){\circle*{2}} \put(120,100){\circle*{2}} \put(100,65){\circle*{2}} 

\put(50,100){\line(50,-35){50}} 
 
\put(124,90){$O$} \put(24,90){$-1/a$} \put(60,60){$e^{-\frac{\pi}{3} i} /a$}

\put(110,60){$-$} \put(105,50){$+$} 

\put(20,145){{\rm (i) }} 

\thicklines

\put(100,65){\line(3,-2){30}}

\put(10,105){\line(1,0){40}} \put(32,105){\line(2,1){7}}  \put(32,105){\line(2,-1){7}} \put(0,115){$j^m(j(\varphi (C_2)))$}
\put(50,105){\line(1,0){70}} \put(86,105){\line(2,1){7}}  \put(86,105){\line(2,-1){7}} \put(70,115){$j^m (\varphi (C_1))$}

\put(120,95){\line(-2,-3){50}} \put(110,80){\line(0,1){6}} \put(110,80){\line(2,1){5}}
\put(80,35){\line(0,1){6}} \put(80,35){\line(2,1){5}}

\put(10,30){$ j^m ( j ( \varphi^2 (C_4) ) ) $}

\thinlines
 
\put(280,24){$(e^{-\frac{2}{3} \pi i} a ,\,0)$} \put(280,40){$(-1/a,\,0)$} 

\put(274,134){$-$} \put(272,93){$-$} \put(272,53){$-$} \put(272,13){$-$} 

\put(240,133){$+$} \put(240,93){$+$} \put(240,53){$+$} \put(240,13){$+$} 

\put(260,150){\circle*{2}} \put(260,120){\circle*{2}} \put(260,110){\circle*{2}} \put(260,80){\circle*{2}} 
\put(260,70){\circle*{2}} \put(260,40){\circle*{2}} \put(260,30){\circle*{2}} \put(260,0){\circle*{2}}

\qbezier(260,150)(250,150)(250,135) \qbezier(250,135)(250,120)(260,120)
\qbezier[10](260,150)(270,150)(270,135) \qbezier[10](270,135)(270,120)(260,120)

\qbezier(260,110)(250,110)(250,95) \qbezier(250,95)(250,80)(260,80)
\qbezier[10](260,110)(270,110)(270,95) \qbezier[10](270,95)(270,80)(260,80)

\qbezier(260,70)(250,70)(250,55) \qbezier(250,55)(250,40)(260,40)
\qbezier[10](260,70)(270,70)(270,55) \qbezier[10](270,55)(270,40)(260,40)

\qbezier(260,30)(250,30)(250,15) \qbezier(250,15)(250,0)(260,0)
\qbezier[10](260,30)(270,30)(270,15) \qbezier[10](270,15)(270,0)(260,0)

\put(200,150){(i)}

\qbezier(260,120)(230,115)(260,110) \qbezier(260,80)(230,75)(260,70)
\qbezier(260,40)(230,35)(260,30) 

\qbezier(260,150)(180,150)(180,75) \qbezier(180,75)(180,0)(260,0)

\put(268,113){$j^m (\varphi(C_1))$} \qbezier(285,108)(280,90)(232,90) 

\put(268,73){$j^m (j(\varphi(C_2)))$} \qbezier(285,68)(280,60)(219,60) 

\put(178,13){$j^m (\varphi^2 (C_3))$} \qbezier(207,47)(195,40)(190,23) 

\put(130,110){$j^m (j(\varphi^2 (C_4)))$} \qbezier(160,105)(165,80)(190,70) 
\qbezier(180,23)(150,70)(104,71)

\thicklines

\qbezier[20](260,110)(230,110)(230,75) \qbezier[20](230,75)(230,40)(260,40) 
\qbezier[3](230,75)(228,79)(226,83) \qbezier[3](230,75)(232,79)(234,83) 

\qbezier[20](260,110)(200,110)(200,70) \qbezier[20](200,70)(200,30)(260,30) 
\qbezier[4](200,70)(198,79)(196,83) \qbezier[4](200,70)(203,79)(206,83) 

\qbezier(260,150)(215,150)(215,80) \qbezier(215,80)(215,40)(260,40)
\put(215,80){\line(-1,-2){4}} \put(215,80){\line(1,-2){4}} 

\qbezier(260,150)(190,150)(190,60) \qbezier(190,60)(190,30)(260,30)
\put(190,65){\line(-1,-2){4}} \put(190,65){\line(1,-2){4}} 
 
\end{picture}

$\,$\\
Thus we have $j^m (j(\varphi^2(C_4))) \cup \{ -C_4 \} \cup \{-j(C_3)\} \cup j(j^m (\varphi^2 (C_3))) \sim 
\varphi^2 (C_2) \cup \{-j (\varphi^2 (C_2))\}$. The equation 
$\int_{ j^m (j(\varphi^2(C_4))) \cup \{ -C_4 \} \cup \{-j(C_3)\} \cup j(j^m (\varphi^2 (C_3))) } \frac{2 z}{w} dz
= \int_{\varphi^2 (C_2) \cup \{-j (\varphi^2 (C_2))  } \frac{2 z}{w} dz $ yields $m=1$. 

Therefore, we find a canonical homology basis as follows. 
\begin{align*}
A_3 &= \varphi (C_1) - j( \varphi (C_1) )-\varphi^2 (C_3) +j( \varphi^2 (C_3) ), \\
B_3 &= -\varphi^2 (C_1) + j( \varphi^2 (C_1) )+\varphi^2 (C_3) -j( \varphi^2 (C_3) ), \\
A_2 &= C_1 - j( C_1 )-\varphi (C_3) +j( \varphi (C_3) ), \\
B_2 &= -\varphi (C_1) + j( \varphi (C_1) )+\varphi (C_3) -j( \varphi (C_3) ) + B_3, \\
A_1 &= -C_3+j(C_3), \quad B_1 = -C_1+j(C_1)+B_2.
\end{align*}

\begin{picture}(340,100)

\qbezier(20,50)(20,90)(160,90) \qbezier(160,90)(300,90)(300,50) 
\qbezier(20,50)(20,10)(160,10) \qbezier(160,10)(300,10)(300,50) 

\qbezier(55,55)(80,35)(105,55) \qbezier(135,55)(160,35)(185,55) \qbezier(215,55)(240,35)(265,55) 
\qbezier(60,52)(80,62)(100,52) \qbezier(140,52)(160,62)(180,52) \qbezier(220,52)(240,62)(260,52)

\put(30,85){{\rm (ii)}} \put(30,15){{\rm (i)}} 

\qbezier(45,50)(45,30)(80,30) \qbezier(115,50)(115,30)(80,30) 
\qbezier(45,50)(45,70)(80,70) \qbezier(115,50)(115,70)(80,70) 

\qbezier(125,50)(125,30)(160,30) \qbezier(195,50)(195,30)(160,30) 
\qbezier(125,50)(125,70)(160,70) \qbezier(195,50)(195,70)(160,70) 

\qbezier(205,50)(205,30)(240,30) \qbezier(275,50)(275,30)(240,30) 
\qbezier(205,50)(205,70)(240,70) \qbezier(275,50)(275,70)(240,70) 

\qbezier(71,46)(55,38)(55,20) \qbezier[15](55,20)(75,25)(71,46) 

\qbezier(151,46)(135,35)(140,10) \qbezier[15](140,10)(160,25)(151,46) 

\qbezier(249,46)(265,38)(265,20) \qbezier[15](249,46)(243,25)(265,20)

\put(80,30){\line(-2,1){5}} \put(80,30){\line(-2,-1){5}} \put(100,25){$A_1$}
\put(160,30){\line(-2,1){5}} \put(160,30){\line(-2,-1){5}} \put(180,25){$A_2$}
\put(240,30){\line(-2,1){5}} \put(240,30){\line(-2,-1){5}} \put(275,35){$A_3$}

\put(62.5,40){\line(-2,-1){5}}  \put(62.5,40){\line(0,-1){5}}  \put(50,10){$B_1$}
\put(144.5,40){\line(-2,-1){5}} \put(144.5,40){\line(0,-1){5}} \put(135,0){$B_2$}
\put(257.5,40){\line(2,-1){5}} \put(257.5,40){\line(0,-1){5}} \put(260,10){$B_3$}

\end{picture}

\subsubsection{Period matrix}

Key $1$-cycles of the canonical homology basis as in \S~\ref{rPD-canonical} are given by $C_1 \cup \{-j(C_1)\}$ and 
$C_3 \cup \{-j(C_3)\}$. First, we have from \eqref{eq-rPD100} and \eqref{eq-rPD101}: 
\begin{align}
\int_{C_1} \dfrac{i(1+z^2)}{w} dz &= \dfrac{1}{2 a} \int_0^1 \dfrac{1+a^2 t^2}{\sqrt{t(1-t^3)(a^3 t^3+\frac{1}{a^3})}} dt 
+ \dfrac{i}{2 a} \int_1^{\infty} \dfrac{1+a^2 t^2}{\sqrt{t(t^3-1)(a^3 t^3+\frac{1}{a^3})}} dt, \label{eq-rPD21}
\end{align}
\begin{align}
\int_{C_3} \dfrac{i(1+z^2)}{w} dz &= \dfrac{1}{a} \int_0^1 \dfrac{1+a^2 t^2}{\sqrt{t(1-t^3)(a^3 t^3+\frac{1}{a^3})}} dt.  \label{eq-rPD22}
\end{align}
Thus, by \eqref{eq-rPD31}--\eqref{eq-rPD12}, \eqref{eq-rPD21}, and \eqref{eq-rPD22}, we find 
\begin{align}
\nonumber \int_{C_1\cup \{-j(C_1)\}} \dfrac{1-z^2}{w} dz &= 
-\dfrac{\sqrt{3}}{a} i \int_1^{\infty} \dfrac{1+a^2 t^2}{\sqrt{t(t^3-1)(a^3 t^3+\frac{1}{a^3})}} dt \\
&\qquad +\dfrac{1}{\sqrt{3} a} \int_0^1 \dfrac{1+a^2 t^2}{\sqrt{t(1-t^3)(a^3 t^3+\frac{1}{a^3})}} dt, \label{rPD-periods-final1}\\
\int_{C_3\cup \{-j(C_3)\}} \dfrac{1-z^2}{w} dz &= 
-\dfrac{2}{\sqrt{3} a} i \int_1^{\infty} \dfrac{1+a^2 t^2}{\sqrt{t(t^3-1)(a^3 t^3+\frac{1}{a^3})}} dt, \label{rPD-periods-final2}\\
\nonumber \int_{C_1\cup \{-j(C_1)\}} \dfrac{i(1+z^2)}{w} dz &= 
\dfrac{1}{a} \int_0^1 \dfrac{1+a^2 t^2}{\sqrt{t(1-t^3)(a^3 t^3+\frac{1}{a^3})}} dt \\
&\qquad + \dfrac{i}{a} \int_1^{\infty} \dfrac{1+a^2 t^2}{\sqrt{t(t^3-1)(a^3 t^3+\frac{1}{a^3})}} dt, \label{rPD-periods-final3}\\
\int_{C_3\cup \{-j(C_3)\}} \dfrac{i(1+z^2)}{w} dz &= 
\dfrac{2}{a} \int_0^1 \dfrac{1+a^2 t^2}{\sqrt{t(1-t^3)(a^3 t^3+\frac{1}{a^3})}} dt, 
\label{rPD-periods-final4} \\
\int_{C_1\cup \{-j(C_1)\}} \dfrac{2 z}{w} dz &= 4 \int_1^{\infty} \dfrac{t}{\sqrt{t(t^3-1)(a^3 t^3+\frac{1}{a^3})}} dt, 
\label{rPD-periods-final5} \\
\int_{C_3\cup \{-j(C_3)\}} \dfrac{2 z}{w} dz &= -4 i \int_0^1 \dfrac{t}{\sqrt{t(1-t^3)(a^3 t^3+\frac{1}{a^3})}} dt. \label{rPD-periods-final6}
\end{align}
Next, we calculate periods of the meromorphic differentials with poles. From now on, let $\gamma$ be an arbitrary $1$-cycle 
on $M$. 

Substituting $\alpha=4$, $\beta= -1$ to \eqref{eq:TTexact1} yields 
\begin{align*}
\int_{\gamma} \dfrac{z^4}{w^3} dz &= \dfrac{1}{6} \int_{\gamma} \dfrac{z^3}{w} dz
-\dfrac{1}{2} \left( a^3-\dfrac{1}{a^3} \right) \int_{\gamma} \dfrac{z^7}{w^3} dz \\
&\underbrace{=}_{\eqref{eq:TTintegral5}} 
\dfrac{1}{6} \int_{\gamma} \dfrac{z^3}{w} dz-\dfrac{1}{2} \left( a^3-\dfrac{1}{a^3} \right) 
\left\{  \dfrac{1}{6} \int_{\gamma} \dfrac{dz}{w}
+\dfrac{1}{2}\left(a^3-\dfrac{1}{a^3}\right)\int_{\gamma} \dfrac{z^4}{w^3}dz \right\}. 
\end{align*}
Thus we find 
\begin{align}\label{rPD-2nd-diff1}
\int_{\gamma} \dfrac{z^4}{w^3}dz = \dfrac{a^6}{3(a^6+1)^2} 
\left\{  2 \int_{\gamma} \dfrac{z^3}{w} dz-\left( a^3-\dfrac{1}{a^3} \right)  \int_{\gamma} \dfrac{dz}{w}  \right\}. 
\end{align}
Substituting $\alpha=5$, $\beta= -1$ to \eqref{eq:TTexact1} implies 
\begin{align*}
\int_{\gamma} \dfrac{z^5}{w^3} dz &= \dfrac{1}{2} \int_{\gamma} \dfrac{z^4}{w} dz
-\dfrac{1}{2} \left( a^3-\dfrac{1}{a^3} \right) \int_{\gamma} \dfrac{z^8}{w^3} dz 
\end{align*}
\begin{align*}
&\underbrace{=}_{\eqref{eq:TTintegral6}} 
\dfrac{1}{2} \int_{\gamma} \dfrac{z^4}{w} dz+\dfrac{1}{2} \left( a^3-\dfrac{1}{a^3} \right) \int_{\gamma} \dfrac{z^2}{w^3}dz \\
&\underbrace{=}_{\eqref{eq:TTintegral2}} \dfrac{1}{2} \int_{\gamma} \dfrac{z^4}{w} dz
+\dfrac{1}{2} \left( a^3-\dfrac{1}{a^3} \right) \left\{  -\dfrac{1}{2} \int_{\gamma} \dfrac{z}{w}dz
+\dfrac{1}{2}\left(-a^3+\dfrac{1}{a^3}\right)\int_{\gamma} \dfrac{z^5}{w^3}dz \right\}. 
\end{align*}
Hence we have 
\begin{align}\label{rPD-2nd-diff2}
\int_{\gamma} \dfrac{z^5}{w^3}dz = \dfrac{a^6}{(a^6+1)^2} 
\left\{  2 \int_{\gamma} \dfrac{z^4}{w} dz-\left( a^3-\dfrac{1}{a^3} \right)  \int_{\gamma} \dfrac{z}{w} dz  \right\}. 
\end{align}
Substituting $\alpha=6$, $\beta= -1$ to \eqref{eq:TTexact1} implies 
\begin{align*}
\int_{\gamma} \dfrac{z^6}{w^3} dz &= \dfrac{5}{6} \int_{\gamma} \dfrac{z^5}{w} dz
-\dfrac{1}{2} \left( a^3-\dfrac{1}{a^3} \right) \int_{\gamma} \dfrac{z^9}{w^3} dz \\
&\underbrace{=}_{\eqref{eq:TTintegral7}} \dfrac{5}{6} \int_{\gamma} \dfrac{z^5}{w} dz
-\dfrac{1}{2} \left( a^3-\dfrac{1}{a^3} \right) \left\{  \dfrac{5}{6} \int_{\gamma} \dfrac{z^2}{w}dz
+\dfrac{1}{2}\left(a^3-\dfrac{1}{a^3}\right)\int_{\gamma} \dfrac{z^6}{w^3}dz \right\}. 
\end{align*}
So we obtain 
\begin{align}\label{rPD-2nd-diff3}
\int_{\gamma} \dfrac{z^6}{w^3}dz = \dfrac{5 a^6}{3 (a^6+1)^2} 
\left\{  2 \int_{\gamma} \dfrac{z^5}{w} dz-\left( a^3-\dfrac{1}{a^3} \right)  \int_{\gamma} \dfrac{z^2}{w} dz  \right\}. 
\end{align}
From \eqref{rPD-2nd-diff1}, \eqref{rPD-2nd-diff2}, and \eqref{rPD-2nd-diff3}, we find 
\begin{align}
\int_{C_1-j(C_1)} \dfrac{z^4}{w^3} dz 
&= \dfrac{2e^{-\frac{\pi}{3}i}a^4}{3(a^6+1)^2} \int_0^1 \dfrac{-2t^3+1-a^6}{\sqrt{t(1-t^3)(a^3+\frac{t^3}{a^3})}} dt, 
\label{rPD-periods1} \\
\int_{C_3-j(C_3)} \dfrac{z^4}{w^3} dz 
&= -\dfrac{2 i a^2}{3(a^6+1)^2} \int_0^1 \dfrac{2 a^6 t^3+1-a^6}{\sqrt{t(1-t^3)(a^3 t^3+\frac{1}{a^3})}} dt, 
\label{rPD-periods2} \\
\int_{C_1-j(C_1)} \dfrac{z^5}{w^3} dz 
&= \dfrac{2a^3}{(a^6+1)^2} \int_0^1 \dfrac{-2t^4+(1-a^6) t}{\sqrt{t(1-t^3)(a^3+\frac{t^3}{a^3})}} dt, 
\label{rPD-periods3} \\
\int_{C_3-j(C_3)} \dfrac{z^5}{w^3} dz 
&= -\dfrac{2 i a^3}{(a^6+1)^2} \int_0^1 \dfrac{2 a^6 t^4+(1-a^6) t}{\sqrt{t(1-t^3)(a^3 t^3+\frac{1}{a^3})}} dt, 
\label{rPD-periods4} \\
\int_{C_1-j(C_1)} \dfrac{z^6}{w^3} dz 
&= \dfrac{10 e^{-\frac{2}{3} \pi i} a^2}{3(a^6+1)^2} \int_0^1 \dfrac{2 t^5+(a^6-1) t^2}{\sqrt{t(1-t^3)(a^3+\frac{t^3}{a^3})}} dt, 
\label{rPD-periods5} \\
\int_{C_3-j(C_3)} \dfrac{z^6}{w^3} dz 
&= -\dfrac{10 i a^4}{3(a^6+1)^2} \int_0^1 \dfrac{2 a^6 t^5+(1-a^6) t^2}{\sqrt{t(1-t^3)(a^3 t^3+\frac{1}{a^3})}} dt. \label{rPD-periods6}
\end{align}
Now, by using the relation $C_1 \cup C_2 \sim C_3\cup C_4$, we consider relations between the above path-integrals. 
Note that $M$ can be constructed via a glueing the following two curves by the relation $x=-1/z$, $y=w/z^4$: 
\[
w^2=z(z^3-a^3)\left(z^3+\dfrac{1}{a^3}\right)\underbrace{\longleftrightarrow}_{
x=-\dfrac{1}{z},\,y=\dfrac{w}{z^4}} y^2=x(x^3-a^3)\left(x^3+\dfrac{1}{a^3}\right).
\]
So $C_2$ and $C_4$ can be rewritten as 
\begin{align*}
C_2 &= \{ (x,\,y) = (-a e^{-\frac{\pi}{3} i } t,\, a^2 e^{-\frac{\pi}{6} i } \sqrt{t(1-t^3)(a^3 t^3+1/a^3)} ) \>|\> t:1\to 0,\>\sqrt{*}>0\}, \\
C_4 &= \{ (x,\,y) = (-t / a,\,  \sqrt{t(1-t^3)(a^3 +t^3/a^3)} /a^2 ) \>|\> t:1\to 0,\>\sqrt{*}>0\}. 
\end{align*}
Also, we have Lemma~\ref{rPD-lemma1} and Lemma~\ref{rPD-lemma2} in the coordinates $(x,\,y)$. Hence, 
by \eqref{rPD-2nd-diff1}, \eqref{rPD-2nd-diff2}, and \eqref{rPD-2nd-diff3}, we find 
\begin{align}
\int_{C_2-j(C_2)} \dfrac{z^4}{w^3}dz &= \int_{C_2-j(C_2)} \dfrac{x^6}{y^3} dx = 
\dfrac{10 e^{\frac{\pi}{6} i} a^4}{3(a^6+1)^2} \int_0^1 \dfrac{-2 a^6 t^5+(a^6-1) t^2}{\sqrt{t(1-t^3)(a^3 t^3+\frac{1}{a^3})}} dt, 
\label{rPD-periods7} \\
\int_{C_4-j(C_4)} \dfrac{z^4}{w^3}dz &= \int_{C_4-j(C_4)} \dfrac{x^6}{y^3} dx = 
\dfrac{10 a^2}{3(a^6+1)^2} \int_0^1 \dfrac{-2 t^5+(1-a^6) t^2}{\sqrt{t(1-t^3)(a^3 +\frac{t^3}{a^3})}} dt, 
\label{rPD-periods8} \\
\int_{C_2-j(C_2)} \dfrac{z^5}{w^3}dz &= -\int_{C_2-j(C_2)} \dfrac{x^5}{y^3} dx = 
-\dfrac{2 i a^3}{(a^6+1)^2} \int_0^1 \dfrac{2 a^6 t^4+(1-a^6) t}{\sqrt{t(1-t^3)(a^3 t^3+\frac{1}{a^3})}} dt, 
\label{rPD-periods9} \\
\int_{C_4-j(C_4)} \dfrac{z^5}{w^3}dz &= -\int_{C_4-j(C_4)} \dfrac{x^5}{y^3} dx = 
\dfrac{2a^3}{(a^6+1)^2} \int_0^1 \dfrac{-2t^4+(1-a^6) t}{\sqrt{t(1-t^3)(a^3+\frac{t^3}{a^3})}} dt, 
\label{rPD-periods10} \\
\int_{C_2-j(C_2)} \dfrac{z^6}{w^3}dz &= \int_{C_2-j(C_2)} \dfrac{x^4}{y^3} dx = 
\dfrac{2e^{-\frac{\pi}{6}i}a^2}{3(a^6+1)^2} \int_0^1 \dfrac{2 a^6 t^3+1-a^6}{\sqrt{t(1-t^3)(a^3 t^3+\frac{1}{a^3})}} dt, 
\label{rPD-periods11} \\
\int_{C_4-j(C_4)} \dfrac{z^6}{w^3}dz &= \int_{C_4-j(C_4)} \dfrac{x^4}{y^3} dx = 
\dfrac{2a^4}{3(a^6+1)^2} \int_0^1 \dfrac{-2t^3+1-a^6}{\sqrt{t(1-t^3)(a^3+\frac{t^3}{a^3})}} dt. \label{rPD-periods12}
\end{align}
Combining \eqref{rPD-periods1}--\eqref{rPD-periods12}, and the relation $C_1 \cup C_2 \sim C_3 \cup C_4$ yields 
\begin{align}
\nonumber & -\dfrac{5}{2\sqrt{3}} a^2 \int_0^1 \dfrac{2 a^6 t^5+(1-a^6)t^2}{\sqrt{t(1-t^3)(a^3 t^3+\frac{1}{a^3})}}dt
+\dfrac{1}{\sqrt{3}} \int_0^1 \dfrac{2a^6 t^3+1-a^6}{\sqrt{t(1-t^3)(a^3 t^3 +\frac{1}{a^3})}} dt \\
&=\dfrac{a^2}{2} \int_0^1 \dfrac{-2 t^3+1-a^6}{\sqrt{t(1-t^3)(a^3+\frac{t^3}{a^3})}} dt, \label{rPD-periods13} 
\end{align}
\begin{align}
\nonumber &-\dfrac{10}{\sqrt{3}} a^2 \int_0^1 \dfrac{2 a^6 t^5+(1-a^6)t^2}{\sqrt{t(1-t^3)(a^3 t^3+\frac{1}{a^3})}}dt
+\dfrac{1}{\sqrt{3}} \int_0^1 \dfrac{2a^6 t^3+1-a^6}{\sqrt{t(1-t^3)(a^3 t^3 +\frac{1}{a^3})}} dt \\
&= 5\int_0^1 \dfrac{-2 t^5+(1-a^6) t^2}{\sqrt{t(1-t^3)(a^3+\frac{t^3}{a^3})}} dt. \label{rPD-periods14}
\end{align}
By \eqref{rPD-periods1}--\eqref{rPD-periods6}, \eqref{rPD-periods13}, and \eqref{rPD-periods14}, we have 
\begin{align}
\nonumber \int_{C_1\cup\{-j(C_1)\}} \dfrac{z^4-z^6}{w^3} dz &= \dfrac{a^2}{3(a^6+1)^2} \bigg\{ 
\dfrac{1}{\sqrt{3}} \int_0^1 \dfrac{(2a^6 t^3+1-a^6)(5 a^2 t^2+1)}{\sqrt{t(1-t^3)(a^3 t^3+\frac{1}{a^3})}} dt \\
&\quad + 3 i  \int_0^1 \dfrac{(2a^6 t^3+1-a^6)(5 a^2 t^2-1)}{\sqrt{t(1-t^3)(a^3 t^3+\frac{1}{a^3})}} dt \bigg\}, 
\label{rPD-periods-final7}\\
\int_{C_3\cup\{-j(C_3)\}} \dfrac{z^4-z^6}{w^3} dz &= \dfrac{2 i a^2}{3(a^6+1)^2} 
\int_0^1 \dfrac{(2a^6 t^3+1-a^6)(5 a^2 t^2-1)}{\sqrt{t(1-t^3)(a^3 t^3+\frac{1}{a^3})}} dt, \label{rPD-periods-final8}\\
\nonumber \int_{C_1\cup\{-j(C_1)\}} \dfrac{i( z^4+z^6)}{w^3} dz &= \dfrac{a^2}{3(a^6+1)^2} \bigg\{ 
\int_0^1 \dfrac{(2a^6 t^3+1-a^6)(5 a^2 t^2+1)}{\sqrt{t(1-t^3)(a^3 t^3+\frac{1}{a^3})}} dt \\
&\quad - \sqrt{3} i  \int_0^1 \dfrac{(2a^6 t^3+1-a^6)(5 a^2 t^2-1)}{\sqrt{t(1-t^3)(a^3 t^3+\frac{1}{a^3})}} dt \bigg\}, 
\label{rPD-periods-final9} \\
\int_{C_3\cup\{-j(C_3)\}} \dfrac{i (z^4+z^6)}{w^3} dz &= \dfrac{2 a^2}{3(a^6+1)^2} 
\int_0^1 \dfrac{(2a^6 t^3+1-a^6)(5 a^2 t^2+1)}{\sqrt{t(1-t^3)(a^3 t^3+\frac{1}{a^3})}} dt, 
\label{rPD-periods-final10} \\
\int_{C_1\cup\{-j(C_1)\}} \dfrac{z^5}{w^3} dz &= \dfrac{2 a^3}{(a^6+1)^2} 
\int_0^1 \dfrac{-2 t^4+(1-a^6) t }{\sqrt{t(1-t^3)(a^3 +\frac{t^3}{a^3})}} dt, \label{rPD-periods-final11}\\
\int_{C_3\cup\{-j(C_3)\}} \dfrac{z^5}{w^3} dz &= -\dfrac{2 i a^3}{(a^6+1)^2} 
\int_0^1 \dfrac{2 a^6 t^4+(1-a^6) t }{\sqrt{t(1-t^3)(a^3 t^3+\frac{1}{a^3})}} dt. \label{rPD-periods-final12}
\end{align}
Set 
\begin{align*}
A &= \dfrac{1}{\sqrt{3} a} \int_0^1 \dfrac{1+a^2 t^2}{\sqrt{t(1-t^3)(a^3 t^3+\frac{1}{a^3})}} dt, \>
B = \dfrac{1}{\sqrt{3} a} \int_1^{\infty} \dfrac{1+a^2 t^2}{\sqrt{t(t^3-1)(a^3 t^3+\frac{1}{a^3})}} dt, \\
C &= 4 \int_1^{\infty} \dfrac{t}{\sqrt{t(t^3-1)(a^3 t^3+\frac{1}{a^3})}} dt, \> 
D = 4 \int_0^1 \dfrac{t}{\sqrt{t(1-t^3)(a^3 t^3+\frac{1}{a^3})}} dt, \\
E &= \dfrac{a^2}{3 \sqrt{3} (a^6+1)^2} \int_0^1 \dfrac{(2a^6 t^3+1-a^6)(5 a^2 t^2+1)}{\sqrt{t(1-t^3)(a^3 t^3+\frac{1}{a^3})}} dt, 
\end{align*}
\begin{align*}
F &= \dfrac{a^2}{3(a^6+1)^2} \int_0^1 \dfrac{(2a^6 t^3+1-a^6)(5 a^2 t^2-1)}{\sqrt{t(1-t^3)(a^3 t^3+\frac{1}{a^3})}} dt, \\
H &= \dfrac{2 a^3}{(a^6+1)^2} \int_0^1 \dfrac{-2 t^4+(1-a^6) t }{\sqrt{t(1-t^3)(a^3 +\frac{t^3}{a^3})}} dt, \> 
I = \dfrac{2 a^3}{(a^6+1)^2} \int_0^1 \dfrac{2 a^6 t^4+(1-a^6) t }{\sqrt{t(1-t^3)(a^3 t^3+\frac{1}{a^3})}} dt. 
\end{align*}
From \eqref{rPD-periods-final1}--\eqref{rPD-periods-final6}, \eqref{rPD-periods-final7}--\eqref{rPD-periods-final12}, 
we find 
\begin{align*}
\int_{C_1\cup\{-j(C_1)\}} G = i
\begin{pmatrix}
A- 3 i B \\
\sqrt{3} (A+ i B) \\
C \\
E+ 3 i F \\
\sqrt{3} (E- i F) \\
H
\end{pmatrix}, \quad 
\int_{C_3\cup\{-j(C_3)\}} G = i
\begin{pmatrix}
- 2 i B \\
2 \sqrt{3} A \\
- i D \\
2 i F \\
2 \sqrt{3} E \\
- i I
\end{pmatrix}. 
\end{align*}
Therefore, the period matrix of the abelian differentials of the second kind is given by 
\[i
\begin{pmatrix}
2 i B   &   -2 (A+i B)  &  -(A+i B)  & 2 A  &   3(A-i B)   &  2(A-i B) \\
-2\sqrt{3} A  &   0   &  \sqrt{3}  (A+i B)  &  -2\sqrt{3} i B  &  \sqrt{3} (A-i B)  &  0 \\
i D   &   C- i D  &  -C + i D  &  -C  &  0  &  -(C+i D) \\
- 2 i F   &   2 (-E+ i F)  &  -E+i F  &  2 E  & 3(E+i F)  &   2(E+i F) \\ 
-2\sqrt{3} E  &  0   &  \sqrt{3} (E-i F)  &  2\sqrt{3}  i  F  & \sqrt{3} (E+i F)  &  0  \\ 
i I  &   H - i I  &   -H+i I   &  -H  &  0  &  -(H+i I)
\end{pmatrix}. 
\]

\subsection{tP family, tD family}\label{tP-detail}

\subsubsection{Canonical homology basis}\label{tP-canonical} 

Let $M$ be a hyperelliptic Riemann surface of genus $3$ defined as the completion of 
$\{(z,\,w)\,|\, w^2=z^8 + a z^4+1 \} \subset \mathbb{C}^2$ for $a\in (-\infty,\,-2) \cup (2,\,\infty)$. 
It suffices to consider the case $a\in (2,\,\infty)$ because we obtain the same result for $a\in (-\infty,\,-2)$. 
The three differentials 
\[\dfrac{dz}{w},\,z\dfrac{dz}{w},\,z^2\dfrac{dz}{w}\]
form a basis for the abelian differentials of the first kind. 
Up to exact forms, the abelian differentials of the second kind are given by the following six differentials: 
\[\dfrac{dz}{w},\,z\dfrac{dz}{w},\,z^2\dfrac{dz}{w},\,\dfrac{z^4}{w^3}dz,\,\dfrac{z^5}{w^3}dz,\,\dfrac{z^6}{w^3}dz.\]

Let 
\[G=\left(\dfrac{1-z^2}{w},\,\dfrac{i(1+z^2)}{w},\,\dfrac{2z}{w},\,
\dfrac{z^4-z^6}{w^3},\,\dfrac{i(z^4+z^6)}{w^3},\,\dfrac{z^5}{w^3}\right)^t dz \]
and consider the biholomorphisms 
\begin{align*}
j(z,\,w)&=(z,\,-w),\>\>\varphi(z,\,w)=(i z,\,w)
\end{align*}
on $M$. Then it is straightforward to compute that 
\begin{align*}
j^{*}G=-G, \quad 
\varphi^{*} G=
\begin{pmatrix}
0 & 1 & 0 & 0 & 0 & 0 \\
-1 & 0 & 0 & 0 & 0 & 0 \\
0 & 0 & -1 & 0 & 0 & 0 \\
0 & 0 & 0 & 0 & 1 & 0 \\
0 & 0 & 0 & -1 & 0 & 0 \\
0 & 0 & 0 & 0 & 0 & -1
\end{pmatrix}G. 
\end{align*}
Now we determine a canonical homology basis on $M$. Recall that 
\begin{align*}
\pi_{{\rm tP}}:&\quad M \longrightarrow \overline{\mathbb{C}}:=\mathbb{C} \cup \{\infty\} \\
& (z,\,w) \> \longmapsto \> z
\end{align*}
defines a two-sheeted branched covering and $j$ is its deck transformation. 
Set $\alpha := \sqrt{\dfrac{\sqrt{a+2}+\sqrt{a-2}}{2}}>1$. $\pi_{{\rm tP}}$ has branch locus
\[
\{\alpha e^{\pm \frac{\pi}{4}i},\> \alpha e^{\pm \frac{3}{4}\pi i},\> 
\alpha^{-1} e^{\pm \frac{\pi}{4}i}, \> \alpha^{-1} e^{\pm \frac{3}{4}\pi i}\}. 
\]

\begin{picture}(340,150)

\put(10,90){\line(1,0){130}} \put(170,90){\line(1,0){130}}
\put(70,30){\line(0,1){120}} \put(230,30){\line(0,1){120}}

\put(85,105){\circle*{2}} \put(105,125){\circle*{2}} \put(85,75){\circle*{2}} \put(105,55){\circle*{2}} 
\put(55,75){\circle*{2}} \put(35,55){\circle*{2}} \put(55,105){\circle*{2}} \put(35,125){\circle*{2}} 

\put(245,105){\circle*{2}} \put(265,125){\circle*{2}} \put(245,75){\circle*{2}} \put(265,55){\circle*{2}} 
\put(215,75){\circle*{2}} \put(195,55){\circle*{2}} \put(215,105){\circle*{2}} \put(195,125){\circle*{2}} 

\put(35,125){\line(1,0){70}} \put(35,55){\line(1,0){70}} \put(55,75){\line(0,1){30}} \put(85,75){\line(0,1){30}} 
\put(195,125){\line(1,0){70}} \put(195,55){\line(1,0){70}} \put(215,75){\line(0,1){30}} \put(245,75){\line(0,1){30}} 

\put(120,140){{\rm (i) }} \put(280,140){{\rm (ii) }}

\put(130,10){{\rm \textbf{figure (tP)}}}

\put(95,107){$-$} \put(85,114){$+$} \put(95,67){$-$} \put(85,60){$+$} 
\put(38,107){$-$} \put(47,114){$+$} \put(38,67){$-$} \put(47,60){$+$} 

\put(255,107){$-$} \put(245,114){$+$} \put(255,67){$-$} \put(245,60){$+$} 
\put(198,107){$-$} \put(207,114){$+$} \put(198,67){$-$} \put(207,60){$+$} 

\put(88,95){$\alpha^{-1} e^{\frac{\pi}{4}i}$} \put(110,124){$\alpha e^{\frac{\pi}{4}i}$} 
\put(88,78){$\alpha^{-1} e^{-\frac{\pi}{4}i}$} \put(110,50){$\alpha e^{-\frac{\pi}{4}i}$} 
\put(10,78){$\alpha^{-1} e^{-\frac{3}{4}\pi i}$} \put(0,50){$\alpha e^{-\frac{3}{4}\pi i}$} 
\put(15,95){$\alpha^{-1} e^{\frac{3}{4}\pi i}$} \put(5,124){$\alpha e^{\frac{3}{4}\pi i}$} 

\thicklines

\put(85,105){\line(1,1){20}} \put(85,75){\line(1,-1){20}} \put(55,105){\line(-1,1){20}} \put(55,75){\line(-1,-1){20}} 
\put(245,105){\line(1,1){20}} \put(245,75){\line(1,-1){20}} \put(215,105){\line(-1,1){20}} \put(215,75){\line(-1,-1){20}} 

\end{picture}

So $M$ can be expressed as a $2$-sheeted branched cover of $\overline{\mathbb{C}}$ as 
the above. 
We prepare two copies of $\overline{\mathbb{C}}$ and slit them along the thick lines in {\rm figure (tP)}. 
Identifying each of the upper (resp. lower) edges of the thick lines in {\rm (i)} with 
each of the lower (resp. upper) edges of the thick lines in (ii), we obtain the hyperelliptic Riemann surface $M$ of genus $3$ 
(see the following figure). 
Note that each of thin lines joining two branch points in {\rm figure (tP)} is corresponding to each of thick lines 
joining two branch points in the following figure. \\

\begin{picture}(340,160)

\put(130,148){$(\alpha e^{\frac{\pi}{4}i},0)$} \put(128,122){$(\alpha^{-1} e^{\frac{\pi}{4}i},\,0)$} 
\put(126,105){$(\alpha^{-1} e^{-\frac{\pi}{4}i},\,0)$} 
\put(130,82){$(\alpha e^{-\frac{\pi}{4}i},\,0)$} \put(128,65){$(\alpha e^{-\frac{3}{4}\pi i} ,\,0)$} 
\put(124,40){$(\alpha^{-1} e^{-\frac{3}{4}\pi i},\,0)$}
\put(125,24){$(\alpha^{-1} e^{\frac{3}{4}\pi i} ,\,0)$} \put(130,0){$(\alpha e^{\frac{3}{4}\pi i},\,0)$} 

\put(122,134){$-$} \put(122,93){$-$} \put(122,53){$-$} \put(122,13){$-$} 
\put(212,133){$-$} \put(212,93){$-$} \put(212,53){$-$} \put(212,13){$-$} 

\put(90,133){$+$} \put(90,93){$+$} \put(90,53){$+$} \put(90,13){$+$} 
\put(180,133){$+$} \put(180,93){$+$} \put(180,53){$+$} \put(180,13){$+$} 

\put(110,150){\circle*{2}} \put(110,120){\circle*{2}} \put(110,110){\circle*{2}} \put(110,80){\circle*{2}} 
\put(110,70){\circle*{2}} \put(110,40){\circle*{2}} \put(110,30){\circle*{2}} \put(110,0){\circle*{2}}
\put(200,150){\circle*{2}} \put(200,120){\circle*{2}} \put(200,110){\circle*{2}} \put(200,80){\circle*{2}} 
\put(200,70){\circle*{2}} \put(200,40){\circle*{2}} \put(200,30){\circle*{2}} \put(200,0){\circle*{2}}

\qbezier(110,150)(100,150)(100,135) \qbezier(100,135)(100,120)(110,120)
\qbezier[10](110,150)(120,150)(120,135) \qbezier[10](120,135)(120,120)(110,120)

\qbezier(110,110)(100,110)(100,95) \qbezier(100,95)(100,80)(110,80)
\qbezier[10](110,110)(120,110)(120,95) \qbezier[10](120,95)(120,80)(110,80)

\qbezier(110,70)(100,70)(100,55) \qbezier(100,55)(100,40)(110,40)
\qbezier[10](110,70)(120,70)(120,55) \qbezier[10](120,55)(120,40)(110,40)

\qbezier(110,30)(100,30)(100,15) \qbezier(100,15)(100,0)(110,0)
\qbezier[10](110,30)(120,30)(120,15) \qbezier[10](120,15)(120,0)(110,0)

\qbezier(200,150)(190,150)(190,135) \qbezier(190,135)(190,120)(200,120)
\qbezier[10](200,150)(210,150)(210,135) \qbezier[10](210,135)(210,120)(200,120)

\qbezier(200,110)(190,110)(190,95) \qbezier(190,95)(190,80)(200,80)
\qbezier[10](200,110)(210,110)(210,95) \qbezier[10](210,95)(210,80)(200,80)

\qbezier(200,70)(190,70)(190,55) \qbezier(190,55)(190,40)(200,40)
\qbezier[10](200,70)(210,70)(210,55) \qbezier[10](210,55)(210,40)(200,40)

\qbezier(200,30)(190,30)(190,15) \qbezier(190,15)(190,0)(200,0)
\qbezier[10](200,30)(210,30)(210,15) \qbezier[10](210,15)(210,0)(200,0)

\put(50,150){(i)} \put(245,150){(ii)}

\thicklines

\qbezier(110,150)(30,150)(30,75) \qbezier(30,75)(30,0)(110,0)

\qbezier(200,150)(280,150)(280,75) \qbezier(280,75)(280,0)(200,0)

\qbezier(110,120)(80,115)(110,110) \qbezier(110,80)(80,75)(110,70)
\qbezier(110,40)(80,35)(110,30) 

\qbezier(200,120)(230,115)(200,110) \qbezier(200,80)(230,75)(200,70)
\qbezier(200,40)(230,35)(200,30) 

\end{picture}

$\,$\\
To describe $1$-cycles on $M$, we consider the following key $1$-cycles:
\begin{align*}
C_1 &= \{(z,\,w)=(t,\,\sqrt{t^8+a t^4+1}) \>|\>t: 0\to \infty,\> \sqrt{*}>0 \}\\
& \qquad \cup \{(z,\,w)=(i t,\, \sqrt{t^8+a t^4+1} )\>|\>t: \infty\to 0 ,\> \sqrt{*}>0 \}, \\
C_2 &= \{(z,\,w)=(i t,\, \sqrt{t^8+a t^4+1})\>|\>t: 1 \to -1,\> \sqrt{*}>0 \} \\
 & \qquad \cup \{(z,\,w)=(e^{i t} ,\, w(t))\>| \> t:-\pi/2\to \pi/2, \> w (0) < 0\}. 
\end{align*}
We shall verify that $C_2$ defines a connected $1$-cycle later. Since $C_1\cap C_2\neq  \o $, 
we may choose $C_1$ and $C_2$ in the following figure. 

\begin{picture}(340,130)

\put(10,60){\line(1,0){60}} \put(20,0){\line(0,1){120}} 
\put(90,60){\line(1,0){60}} \put(100,0){\line(0,1){120}} 

\put(35,75){\circle*{2}} \put(55,95){\circle*{2}} \put(35,45){\circle*{2}} \put(55,25){\circle*{2}} 
\put(115,75){\circle*{2}} \put(135,95){\circle*{2}} \put(115,45){\circle*{2}} \put(135,25){\circle*{2}} 

\put(10,95){\line(1,0){45}} \put(10,25){\line(1,0){45}} \put(35,45){\line(0,1){30}} 
\put(90,95){\line(1,0){45}} \put(90,25){\line(1,0){45}} \put(115,45){\line(0,1){30}} 

\put(60,110){{\rm (i) }} \put(140,110){{\rm (ii) }}

\put(45,77){$-$} \put(35,84){$+$} \put(45,37){$-$} \put(35,30){$+$} 
\put(125,77){$-$} \put(115,84){$+$} \put(125,37){$-$} \put(115,30){$+$}

\put(230,110){\circle*{2}} \put(230,80){\circle*{2}} \put(230,70){\circle*{2}} \put(230,40){\circle*{2}} 
\put(260,110){\circle*{2}} \put(260,80){\circle*{2}} \put(260,70){\circle*{2}} \put(260,40){\circle*{2}} 

\qbezier(230,110)(220,110)(220,95) \qbezier(220,95)(220,80)(230,80)
\qbezier[10](230,110)(240,110)(240,95) \qbezier[10](240,95)(240,80)(230,80)

\qbezier[10](230,70)(240,70)(240,55) \qbezier[10](240,55)(240,40)(230,40)

\put(180,110){(i)} \put(290,110){(ii)} 

\qbezier(230,110)(170,110)(170,35) \qbezier(230,80)(200,75)(230,70) \qbezier(230,40)(200,35)(230,30)
\qbezier(230,30)(220,30)(220,15) \qbezier[12](230,30)(240,30)(240,15) 

\qbezier(230,70)(220,70)(220,55) \qbezier(220,55)(220,40)(230,40)

\qbezier(260,110)(250,110)(250,95) \qbezier(250,95)(250,80)(260,80)
\qbezier[10](260,110)(270,110)(270,95) \qbezier[10](270,95)(270,80)(260,80)

\qbezier[10](260,70)(270,70)(270,55) \qbezier[10](270,55)(270,40)(260,40)

\qbezier(260,110)(320,110)(320,35) \qbezier(260,80)(290,75)(260,70) \qbezier(260,40)(290,35)(260,30)
\qbezier(260,30)(250,30)(250,15) \qbezier[12](260,30)(270,30)(270,15) 

\qbezier(260,70)(250,70)(250,55) \qbezier(250,55)(250,40)(260,40)

\thicklines

\put(35,75){\line(1,1){20}} \put(35,45){\line(1,-1){20}} 
\put(115,75){\line(1,1){20}} \put(115,45){\line(1,-1){20}} 

\put(24,64){\line(1,0){46}} \put(50,64){\line(-2,1){5}} \put(50,64){\line(-2,-1){5}} 
\put(24,64){\line(0,1){56}} \put(24,90){\line(-1,2){3}} \put(24,90){\line(1,2){3}} \put(28,105){$C_1$} 
\put(16,34){\line(0,1){52}} \put(16,60){\line(-1,2){3}} \put(16,60){\line(1,2){3}} 
\qbezier(16,34)(30,34)(40,40) \qbezier(16,86)(30,86)(40,80) 
\put(28,35){\line(-4,1){6}} \put(28,35){\line(-3,-2){5}} 
\put(28,85){\line(3,1){5}} \put(28,85){\line(2,-3){3}} \put(22,40){$C_2$} 

\qbezier(120,40)(130,60)(120,80) \put(125,60){\line(-1,-2){3}} \put(125,60){\line(1,-2){3}} \put(126,65){$C_2$} 

\qbezier(190,60)(210,60)(215,75) \qbezier[5](215,75)(208,80)(200,80) 
\qbezier[3](203,80)(207,81)(211,82) \qbezier[3](203,80)(204,77)(205,74) 
\qbezier[8](200,80)(200,92)(195,100) \qbezier(195,100)(185,90)(190,60) 
\put(188,75){\line(-1,2){3}} \put(188,75){\line(1,2){3}} \put(185,50){$C_1$} 
\qbezier(220,95)(200,90)(195,80) \qbezier(195,80)(195,72)(196,65) \put(195,70){\line(-1,2){3}} \put(195,70){\line(1,2){3}} 
\qbezier(196,65)(210,55)(220,55) \put(205,97){$C_2$} 
\qbezier(270,55)(290,75)(270,95) \put(280,75){\line(1,-2){3}} \put(280,75){\line(-1,-1){5}} \put(282,80){$C_2$} 

\end{picture}
$\,$\\
After that, we shall give other $1$-cycles. From $C_1 \cap \varphi^2 (C_1) \neq \o$ and $C_1 \cap \varphi^3 (C_1) \neq \o$, 
we have the following two figures. 

\begin{picture}(340,150)

\put(0,70){\line(1,0){60}} \put(50,10){\line(0,1){120}} 
\put(35,55){\circle*{2}} \put(15,35){\circle*{2}} \put(35,85){\circle*{2}} \put(15,105){\circle*{2}} 
\put(15,105){\line(1,0){45}} \put(15,35){\line(1,0){45}} \put(35,55){\line(0,1){30}}  
\put(0,120){{\rm (i) }} 
\put(18,87){$-$} \put(27,94){$+$} \put(18,47){$-$} \put(27,40){$+$} 

\put(80,70){\line(1,0){60}} \put(130,10){\line(0,1){120}} 
\put(115,55){\circle*{2}} \put(95,35){\circle*{2}} \put(115,85){\circle*{2}} \put(95,105){\circle*{2}} 
\put(95,105){\line(1,0){45}} \put(95,35){\line(1,0){45}} \put(115,55){\line(0,1){30}}  
\put(80,120){{\rm (ii) }} 
\put(98,87){$-$} \put(107,94){$+$} \put(98,47){$-$} \put(107,40){$+$} 

\put(10,20){$\varphi^2 (C_1)$} 
\put(57,45){$\varphi^2 (C_2)$} \put(71,77){$\varphi^2 (C_2)$}

\thicklines

\put(35,85){\line(-1,1){20}} \put(35,55){\line(-1,-1){20}} 

\put(115,85){\line(-1,1){20}} \put(115,55){\line(-1,-1){20}} 

\put(45,10){\line(0,1){55}} \put(45,65){\line(-1,0){45}} \put(45,30){\line(-1,-2){3}} \put(45,30){\line(1,-2){3}} 
\put(15,65){\line(2,-1){6}} \put(15,65){\line(2,1){6}}

\put(54,44){\line(0,1){52}} \put(54,70){\line(-1,-2){3}} \put(54,70){\line(1,-2){3}} 
\qbezier(54,44)(40,44)(30,50) \qbezier(54,96)(40,96)(30,90) 
\put(42,45){\line(-1,2){2}} \put(42,45){\line(-2,-1){4}} 
\put(42,95){\line(1,1){4}} \put(42,95){\line(3,-2){5}} 

\qbezier(110,50)(100,70)(110,90) \put(105,70){\line(-1,2){3}} \put(105,70){\line(1,2){3}}

\thinlines 

\put(230,10){\circle*{2}} \put(230,40){\circle*{2}} \put(230,50){\circle*{2}} \put(230,80){\circle*{2}} 
\put(260,10){\circle*{2}} \put(260,40){\circle*{2}} \put(260,50){\circle*{2}} \put(260,80){\circle*{2}} 

\qbezier(230,120)(220,120)(220,105) \qbezier(220,105)(220,90)(230,90)
\qbezier[10](230,120)(240,120)(240,105) \qbezier[10](240,105)(240,90)(230,90)

\qbezier(230,80)(220,80)(220,65) \qbezier(220,65)(220,50)(230,50)
\qbezier[10](230,80)(240,80)(240,65) \qbezier[10](240,65)(240,50)(230,50)

\qbezier(230,40)(220,40)(220,25) \qbezier(220,25)(220,10)(230,10)
\qbezier[10](230,40)(240,40)(240,25) \qbezier[10](240,25)(240,10)(230,10)

\qbezier(260,120)(250,120)(250,105) \qbezier(250,105)(250,90)(260,90)
\qbezier[10](260,120)(270,120)(270,105) \qbezier[10](270,105)(270,90)(260,90)

\qbezier(260,80)(250,80)(250,65) \qbezier(250,65)(250,50)(260,50)
\qbezier[10](260,80)(270,80)(270,65) \qbezier[10](270,65)(270,50)(260,50)

\qbezier(260,40)(250,40)(250,25) \qbezier(250,25)(250,10)(260,10)
\qbezier[10](260,40)(270,40)(270,25) \qbezier[10](270,25)(270,10)(260,10)

\put(180,130){(i)} \put(285,130){(ii)}

\qbezier(165,85)(165,10)(230,10)

\qbezier(315,85)(315,10)(260,10)

\qbezier(230,90)(200,85)(230,80)
\qbezier(230,50)(200,45)(230,40) 

\qbezier(260,90)(290,85)(260,80)
\qbezier(260,50)(290,45)(260,40) 

\thicklines

\qbezier[10](200,115)(200,95)(215,85) \qbezier(215,85)(190,90)(180,105) 
\qbezier(180,105)(190,60)(215,45) \qbezier[20](215,45)(190,70)(200,115)
\qbezier[4](197,86)(196,81)(195,76) \qbezier[4](197,86)(201,82)(205,78)
\put(195,65){\line(-3,2){7}} \put(195,65){\line(0,1){8}} \put(166,110){$\varphi^2 (C_1)$}

\qbezier(220,65)(190,70)(175,95) \qbezier(175,95)(180,40)(220,25) 
\put(189,50){\line(-3,2){7}} \put(189,50){\line(0,1){8}} \put(170,30){$\varphi^2 (C_2)$} 

\qbezier(270,25)(290,45)(270,65) \put(280,45){\line(1,-2){3}} \put(280,45){\line(-1,-1){5}} \put(277,60){$\varphi^2 (C_2)$} 

\end{picture}

\begin{picture}(340,150)

\put(10,130){\line(1,0){105}} \put(60,80){\line(0,1){60}} 

\put(75,115){\circle*{2}} \put(95,95){\circle*{2}} \put(45,115){\circle*{2}} \put(25,95){\circle*{2}} 

\put(25,95){\line(1,0){70}} \put(45,115){\line(0,1){25}} \put(75,115){\line(0,1){25}} 

\put(20,140){{\rm (i) }} 

\put(85,107){$-$} \put(75,100){$+$} \put(28,107){$-$} \put(37,100){$+$}

\put(10,60){\line(1,0){105}} \put(60,10){\line(0,1){60}} 

\put(75,45){\circle*{2}} \put(95,25){\circle*{2}} \put(45,45){\circle*{2}} \put(25,25){\circle*{2}} 

\put(25,25){\line(1,0){70}} \put(45,45){\line(0,1){25}} \put(75,45){\line(0,1){25}} 

\put(20,70){{\rm (ii) }} 

\put(85,37){$-$} \put(75,30){$+$} \put(28,37){$-$} \put(37,30){$+$}

\put(100,115){$\varphi^3 (C_1)$} \put(0,120){$\varphi^3 (C_2)$} \put(45,22){$\varphi^3 (C_2)$} 

\thicklines

\put(75,115){\line(1,-1){20}} \put(45,115){\line(-1,-1){20}}

\put(75,45){\line(1,-1){20}} \put(45,45){\line(-1,-1){20}}

\put(65,125){\line(1,0){50}} \put(90,125){\line(2,1){6}} \put(90,125){\line(2,-1){6}} 
\put(65,125){\line(0,-1){45}} \put(65,105){\line(-1,2){3}} \put(65,105){\line(1,2){3}} 

\put(34,134){\line(1,0){52}} \put(60,134){\line(2,1){6}} \put(60,134){\line(2,-1){6}} 
\qbezier(34,134)(34,120)(40,110) \put(36,119){\line(-1,1){4}} \put(36,119){\line(1,3){2}} 
\qbezier(86,134)(86,120)(80,110) \put(84,119){\line(-3,-2){6}} \put(84,119){\line(1,-2){3}} 

\qbezier(40,40)(60,30)(80,40) \put(60,35){\line(-2,1){6}} \put(60,35){\line(-2,-1){6}}

\thinlines 

\put(230,40){\circle*{2}} \put(230,70){\circle*{2}} \put(230,80){\circle*{2}} \put(230,110){\circle*{2}} 
\put(260,40){\circle*{2}} \put(260,70){\circle*{2}} \put(260,80){\circle*{2}} \put(260,110){\circle*{2}} 

\qbezier(230,110)(220,110)(220,95) \qbezier(220,95)(220,80)(230,80)
\qbezier[10](230,110)(240,110)(240,95) \qbezier[10](240,95)(240,80)(230,80)

\qbezier(230,70)(220,70)(220,55) \qbezier(220,55)(220,40)(230,40)
\qbezier[10](230,70)(240,70)(240,55) \qbezier[10](240,55)(240,40)(230,40)

\qbezier(260,110)(250,110)(250,95) \qbezier(250,95)(250,80)(260,80)
\qbezier[10](260,110)(270,110)(270,95) \qbezier[10](270,95)(270,80)(260,80)

\qbezier(260,70)(250,70)(250,55) \qbezier(250,55)(250,40)(260,40)
\qbezier[10](260,70)(270,70)(270,55) \qbezier[10](270,55)(270,40)(260,40)

\put(180,140){(i)} \put(285,140){(ii)}

\qbezier(165,75)(165,40)(180,20) \qbezier(165,75)(165,110)(180,130) 

\qbezier(315,75)(315,40)(300,20) \qbezier(315,75)(315,110)(300,130) 

\qbezier(230,120)(200,115)(230,110)
\qbezier(230,80)(200,75)(230,70)
\qbezier(230,40)(200,35)(230,30) 

\qbezier(260,120)(290,115)(260,110)
\qbezier(260,80)(290,75)(260,70)
\qbezier(260,40)(290,35)(260,30) 

\put(215,70){$\varphi^3 (C_1)$} \put(170,45){$\varphi^3 (C_2)$} 
\put(282,80){$\varphi^3 (C_2)$} 

\thicklines 

\qbezier[10](240,95)(215,100)(215,115) 
\qbezier(215,115)(150,110)(215,35) \put(184,100){\line(0,1){10}} \put(184,100){\line(2,1){10}} 
\qbezier[10](215,35)(215,55)(240,55) 

\qbezier[10](250,55)(280,55)(280,75) \qbezier[10](280,75)(280,95)(250,95)
\qbezier[3](280,75)(277,72)(274,69) \qbezier[3](280,75)(282,71)(284,67) 

\qbezier(215,75)(175,100)(215,115) \put(197,90){\line(-2,1){8}} \put(197,90){\line(1,2){4}} 
\qbezier[10](215,75)(200,100)(215,115) 
\qbezier[3](208,101)(206,97)(204,93) \qbezier[3](208,101)(210,97)(212,93) 

\end{picture}

Therefore, we find a canonical homology basis as follows. 
\begin{align*}
A_1 &= C_2, \quad B_1 = C_1, \quad 
A_2 = -\varphi^2 (C_1)-\varphi^3 (C_1)+ \varphi^3 (C_2), \\
B_2 &= B_1 +\varphi^3 (C_1), \quad 
A_3 = \varphi^2 (C_2), \quad B_3 = B_2 +\varphi^2 (C_1). 
\end{align*}

\begin{picture}(340,100)

\qbezier(20,50)(20,90)(160,90) \qbezier(160,90)(300,90)(300,50) 
\qbezier(20,50)(20,10)(160,10) \qbezier(160,10)(300,10)(300,50) 

\qbezier(55,55)(80,35)(105,55) \qbezier(135,55)(160,35)(185,55) \qbezier(215,55)(240,35)(265,55) 
\qbezier(60,52)(80,62)(100,52) \qbezier(140,52)(160,62)(180,52) \qbezier(220,52)(240,62)(260,52)

\put(30,85){{\rm (ii)}} \put(30,15){{\rm (i)}} 

\qbezier(45,50)(45,30)(80,30) \qbezier(115,50)(115,30)(80,30) 
\qbezier(45,50)(45,70)(80,70) \qbezier(115,50)(115,70)(80,70) 

\qbezier(125,50)(125,30)(160,30) \qbezier(195,50)(195,30)(160,30) 
\qbezier(125,50)(125,70)(160,70) \qbezier(195,50)(195,70)(160,70) 

\qbezier(205,50)(205,30)(240,30) \qbezier(275,50)(275,30)(240,30) 
\qbezier(205,50)(205,70)(240,70) \qbezier(275,50)(275,70)(240,70) 

\qbezier(71,46)(55,38)(55,20) \qbezier[15](55,20)(75,25)(71,46) 

\qbezier(151,46)(135,35)(140,10) \qbezier[15](140,10)(160,25)(151,46) 

\qbezier(249,46)(265,38)(265,20) \qbezier[15](249,46)(243,25)(265,20)

\put(80,30){\line(-2,1){5}} \put(80,30){\line(-2,-1){5}} \put(100,25){$A_1$}
\put(160,30){\line(-2,1){5}} \put(160,30){\line(-2,-1){5}} \put(180,25){$A_2$}
\put(240,30){\line(-2,1){5}} \put(240,30){\line(-2,-1){5}} \put(275,35){$A_3$}

\put(62.5,40){\line(-2,-1){5}}  \put(62.5,40){\line(0,-1){5}}  \put(50,10){$B_1$}
\put(144.5,40){\line(-2,-1){5}} \put(144.5,40){\line(0,-1){5}} \put(135,0){$B_2$}
\put(257.5,40){\line(2,-1){5}} \put(257.5,40){\line(0,-1){5}} \put(260,10){$B_3$}

\end{picture}

\subsubsection{Period matrix}

Key $1$-cycles of the canonical homology basis as in \S~\ref{tP-canonical} are given by $C_1$ and $C_2$. 
First, we have 
\begin{align}
\int_{C_1} \dfrac{1-z^2}{w} dz &= -i \int_0^{\infty} \dfrac{1+t^2}{\sqrt{t^8+a t^4+1}} dt
= -2i \int_0^1 \dfrac{1+t^2}{\sqrt{t^8+a t^4+1}} dt, \label{tP-period1} 
\end{align}
\begin{align}
\int_{C_1} \dfrac{i (1+z^2)}{w} dz &= 
2i \int_0^1 \dfrac{1+t^2}{\sqrt{t^8+a t^4+1}} dt, \label{tP-period2}\\
\int_{C_1} \dfrac{2 z}{w} dz &= 
8 \int_0^1 \dfrac{t}{\sqrt{t^8+a t^4+1}} dt, \label{tP-period3}\\
\int_{C_1} \dfrac{z^4-z^6}{w^3} dz &= 
-2i \int_0^1 \dfrac{t^4+t^6}{\sqrt{t^8+a t^4+1}^3} dt, \label{tP-period4}\\
\int_{C_1} \dfrac{i(z^4+z^6)}{w^3} dz &= 
2i \int_0^1 \dfrac{t^4+t^6}{\sqrt{t^8+a t^4+1}^3} dt, \label{tP-period5}\\
\int_{C_1} \dfrac{z^5}{w^3} dz &= 
4 \int_0^1 \dfrac{t^5}{\sqrt{t^8+a t^4+1}^3} dt. \label{tP-period6}
\end{align}
Next we calculate periods along $C_2$. 
Set $x=(z+1/z)/2=\cos t:0\to 1\to 0$ ($t:-\pi/2 \to 0 \to \pi/2$) along a circle part of $C_2$. 
Then $\dfrac{1-z^2}{w}dz=-2\dfrac{z^2}{w} dx$ and $\dfrac{z^4-z^6}{w^3}dz=-2\left(\dfrac{z^2}{w}\right)^3 dx$. 
$z+1/z=2x$ implies that $z^2+1/z^2=4x^2-2$, and thus $z^4+1/z^4=16 x^4-16 x^2+2$. Hence, 
\begin{align*}
\left( \dfrac{z^2}{w}  \right)^2&=\dfrac{z^4}{z^8+a z^4+1} =\dfrac{1}{z^4+\frac{1}{z^4}+a}
=\dfrac{1}{16x^4-16x^2+2+a}>0.
\end{align*}
To choose a suitable branch, we substitute $t=0$ to $z^2/w$. Then 
$\dfrac{z^2}{w}(t=0)=\dfrac{1}{w(0)}<0$. As a result, 
\begin{align}
\dfrac{z^2}{w} = -\dfrac{1}{\sqrt{ 16x^4-16x^2+2+a }} <0.  \label{tP-branch1}
\end{align}
Consequently, we have 
\begin{align}
\nonumber \int_{C_2} \dfrac{1-z^2}{w}dz&= i \int^{-1}_1  \dfrac{1+t^2}{\sqrt{ t^8+a t^4+1}}dt 
+ \int_0^1 \dfrac{2 }{\sqrt{ 16 x^4-16 x^2+2+a}}dx \\
\nonumber &\qquad +\int^0_1 \dfrac{2 }{\sqrt{ 16 x^4-16 x^2+2+a}}dx \\
&= -2 i \int^{1}_0  \dfrac{1+t^2}{\sqrt{ t^8+a t^4+1}}dt, \label{tP-period7}\\
\nonumber \int_{C_2} \dfrac{z^4-z^6}{w^3}dz&= i \int^{-1}_1  \dfrac{t^4+t^6}{\sqrt{ t^8+a t^4+1}^3}dt 
+ \int_0^1 \dfrac{2 }{\sqrt{ 16 x^4-16 x^2+2+a}^3}dx \\
\nonumber &\qquad +\int^0_1 \dfrac{2 }{\sqrt{ 16 x^4-16 x^2+2+a}^3}dx \\
&= -2 i \int^{1}_0  \dfrac{t^4+t^6}{\sqrt{ t^8+a t^4+1}^3}dt. \label{tP-period8}
\end{align}
Moreover, from \eqref{tP-branch1} and substituting $t=\pm \pi/2$ to $z^2/w$, we find 
$\dfrac{-1}{w(\pm \pi/2)} <0$. 
So $w(\pm \pi/2) >0$. 
Thus, $C_2$ defines a connected $1$-cycle. 

We set $x=(z-1/z)/(2 i)=\sin t:-1\to 1$ ($t:-\pi/2 \to \pi/2$) along a circle part of $C_2$. 
Then $\dfrac{i(1+z^2)}{w}dz=-2\dfrac{z^2}{w} dx$ and $\dfrac{i(z^4+z^6)}{w^3}dz=-2\left(\dfrac{z^2}{w}\right)^3 dx$. 
$z-1/z=2 i x$ implies that $z^2+1/z^2=-4x^2+2$, and thus $z^4+1/z^4=16 x^4-16 x^2+2$. Hence, 
\begin{align*}
\left( \dfrac{z^2}{w}  \right)^2&=\dfrac{z^4}{z^8+a z^4+1} =\dfrac{1}{z^4+\frac{1}{z^4}+a}
=\dfrac{1}{16x^4-16x^2+2+a}>0.
\end{align*}
To choose a suitable branch, we substitute $t=0$ to $z^2/w$. Then 
$\dfrac{z^2}{w}(t=0)=\dfrac{1}{w(0)}<0$. As a result, 
\begin{align*}
\dfrac{z^2}{w} = -\dfrac{1}{\sqrt{ 16x^4-16x^2+2+a }} <0.  
\end{align*}
Consequently, we have 
\begin{align}
\nonumber \int_{C_2} \dfrac{i(1+z^2)}{w}dz&= \int^{1}_{-1}  \dfrac{1-t^2}{\sqrt{ t^8+a t^4+1}}dt 
+ \int_{-1}^1 \dfrac{2 }{\sqrt{ 16 x^4-16 x^2+2+a}}dx \\
&= 2 \int^{1}_{0}  \dfrac{1-t^2}{\sqrt{ t^8+a t^4+1}}dt + 4 \int_{0}^1 \dfrac{dx }{\sqrt{ 16 x^4-16 x^2+2+a}}, \label{tP-period9}\\
\nonumber \int_{C_2} \dfrac{i(z^4+z^6)}{w^3}dz&= \int_{-1}^1  \dfrac{t^4-t^6}{\sqrt{ t^8+a t^4+1}^3}dt 
+ \int_{-1}^1 \dfrac{2 }{\sqrt{ 16 x^4-16 x^2+2+a}^3}dx \\
&= 2 \int_{0}^1  \dfrac{t^4-t^6}{\sqrt{ t^8+a t^4+1}^3}dt 
+ 4\int_{0}^1 \dfrac{dx }{\sqrt{ 16 x^4-16 x^2+2+a}^3}. \label{tP-period10}
\end{align}

Setting $x=(z^2-1)/(i(z^2+1))=\tan t:-\infty\to \infty$ ($t:-\pi/2 \to \pi/2$) along a circle part of $C_2$, we find 
$\dfrac{2 z}{w}dz=\dfrac{i}{2}\dfrac{(z^2+1)^2}{w} dx$ and $\dfrac{z^5}{w^3}dz=\dfrac{i}{4}\dfrac{z^4}{w^2}\dfrac{(z^2+1)^2}{w} dx$. 
$z^2=\dfrac{1-x^2}{1+x^2}+i\dfrac{2x}{1+x^2}$ implies that $z^2+1/z^2=\dfrac{2(1-x^2)}{1+x^2}$, 
and thus $z^4+1/z^4=\dfrac{2x^4-12x^2+2}{(1+x^2)^2}$. Hence, 
\begin{align*}
\left( \dfrac{(z^2+1)^2}{w}  \right)^2&=\dfrac{16}{(2+a)x^4+(2a-12)x^2+2+a}>0.
\end{align*}
To choose a suitable branch, we substitute $t=0$ to $(z^2+1)^2/w$. Then 
$\dfrac{(z^2+1)^2}{w}(t=0)=\dfrac{4}{w(0)}<0$. As a result, 
\begin{align*}
\dfrac{(z^2+1)^2}{w} = -\dfrac{4}{\sqrt{ (2+a)x^4+(2a-12)x^2+2+a }} <0.  
\end{align*}
Similarly, we obtain 
\[
\dfrac{z^4}{w^2}=\dfrac{1}{z^4+\frac{1}{z^4}+a}=\dfrac{(1+x^2)^2}{(2+a)x^4+(2a-12)x^2+2+a}. 
\]
Consequently, we have 
\begin{align}
\nonumber &\int_{C_2} \dfrac{2 z}{w}dz= \int^{1}_{-1}  \dfrac{2 t}{\sqrt{ t^8+a t^4+1}}dt 
-2i \int_{-\infty}^{\infty} \dfrac{dx }{\sqrt{ (2+a) x^4+(2a-12) x^2+2+a}} \\
&\qquad = -8i \int_{0}^{1} \dfrac{dx }{\sqrt{ (2+a) x^4+(2a-12) x^2+2+a}},\label{tP-period11}\\
\nonumber &\int_{C_2} \dfrac{z^5}{w^3}dz = \int_{-1}^1  \dfrac{t^5}{\sqrt{ t^8+a t^4+1}^3}dt 
-i \int_{-\infty}^{\infty} \dfrac{(1+x^2)^2 }{\sqrt{ (2+a) x^4+(2a-12) x^2+2+a}^3}dx \\
&\qquad = -4i \int_{0}^{1} \dfrac{(1+x^2)^2 }{\sqrt{ (2+a) x^4+(2a-12) x^2+2+a}^3}dx. \label{tP-period12}
\end{align}
Set 
\begin{align*}
A &= 2 \int^{1}_{0}  \dfrac{1-t^2}{\sqrt{ t^8+a t^4+1}}dt + 4 \int_{0}^1 \dfrac{dt }{\sqrt{ 16 t^4-16 t^2+2+a}},\\
B &= 2 \int^{1}_0  \dfrac{1+t^2}{\sqrt{ t^8+a t^4+1}}dt, \quad C= 8 \int_0^1 \dfrac{t}{\sqrt{t^8+a t^4+1}} dt, \\
D &= 8 \int_{0}^{1} \dfrac{dt }{\sqrt{ (2+a) t^4+(2a-12) t^2+2+a}}, \\
E &=  2 \int_{0}^1  \dfrac{t^4-t^6}{\sqrt{ t^8+a t^4+1}^3}dt + 4\int_{0}^1 \dfrac{dt }{\sqrt{ 16 t^4-16 t^2+2+a}^3}, \\
F &= 2 \int^{1}_0  \dfrac{t^4+t^6}{\sqrt{ t^8+a t^4+1}^3}dt, \quad H= 4 \int_0^1 \dfrac{t^5}{\sqrt{t^8+a t^4+1}^3} dt, \\
I &= 4 \int_{0}^{1} \dfrac{(1+t^2)^2 }{\sqrt{ (2+a) t^4+(2a-12) t^2+2+a}^3}dt. 
\end{align*}
By \eqref{tP-period1}--\eqref{tP-period6} and \eqref{tP-period7}--\eqref{tP-period12}, we have 
\[
\int_{C_1} G=
\begin{pmatrix}
-i B \\
i B \\
C \\
-i F \\
i F \\
H
\end{pmatrix}, \quad 
\int_{C_2} G=
\begin{pmatrix}
-i B \\
A \\
-i D \\
-i F \\
E \\
-i I
\end{pmatrix}. 
\]
Therefore, the period matrix of the abelian differentials of the second kind is given by 
\[
\begin{pmatrix}
- i B   &   -A  &  i B  & -i B  &   -2 i B   &  -i B \\
A  &   i B  &  -A  &  i B  &  0  &  -i B \\
-i D   &   i D  &  -i D  &  C  &  0  &  C \\
- i F   &   -E  &  i F  &  -i F  & -2 i F  &   -i F \\ 
E  &  i F   &  -E  &  i F  & 0  &  -i F  \\ 
-i I  &   i I  &   -i I   &  H  &  0  &  H
\end{pmatrix}. 
\]

\subsection{tCLP family}\label{tCLP-detail}

\subsubsection{canonical homology basis}\label{tCLP-canonical} 

Let $M$ be a hyperelliptic Riemann surface of genus $3$ defined as the completion of 
$\{(z,\,w)\,|\, w^2=z^8 + a z^4+1 \} \subset \mathbb{C}^2$ for $a\in (-2,\,2)$. 
It suffices to consider the case $a\in [0,\,2)$ because we obtain the same result for $a\in (-2,\,0]$. 
The three differentials 
\[\dfrac{dz}{w},\,z\dfrac{dz}{w},\,z^2\dfrac{dz}{w}\]
form a basis for the abelian differentials of the first kind. 
Up to exact forms, the abelian differentials of the second kind are given by the following six differentials: 
\[\dfrac{dz}{w},\,z\dfrac{dz}{w},\,z^2\dfrac{dz}{w},\,\dfrac{z^4}{w^3}dz,\,\dfrac{z^5}{w^3}dz,\,\dfrac{z^6}{w^3}dz.\]

Let 
\[G=\left(\dfrac{1-z^2}{w},\,\dfrac{i(1+z^2)}{w},\,\dfrac{2z}{w},\,
\dfrac{z^4-z^6}{w^3},\,\dfrac{i(z^4+z^6)}{w^3},\,\dfrac{z^5}{w^3}\right)^t dz \]
and consider the biholomorphisms 
\begin{align*}
j(z,\,w)&=(z,\,-w),\>\>\varphi(z,\,w)=(i z,\,w)
\end{align*}
on $M$. Then it is straightforward to compute that 
\begin{align*}
j^{*}G=-G, \quad 
\varphi^{*} G=
\begin{pmatrix}
0 & 1 & 0 & 0 & 0 & 0 \\
-1 & 0 & 0 & 0 & 0 & 0 \\
0 & 0 & -1 & 0 & 0 & 0 \\
0 & 0 & 0 & 0 & 1 & 0 \\
0 & 0 & 0 & -1 & 0 & 0 \\
0 & 0 & 0 & 0 & 0 & -1
\end{pmatrix}G. 
\end{align*}
Now we determine a canonical homology basis on $M$. Recall that 
\begin{align*}
\pi_{{\rm tCLP}}:&\quad M \longrightarrow \overline{\mathbb{C}}:=\mathbb{C} \cup \{\infty\} \\
& (z,\,w) \> \longmapsto \> z
\end{align*}
defines a two-sheeted branched covering and $j$ is its deck transformation. 
Set $e^{i\alpha} := -\frac{a}{2}+i \frac{\sqrt{4-a^2}}{2}\in S^1\subset \mathbb{C}$ ($\alpha \in [\pi/2,\,\pi) $). 
$\pi_{{\rm tCLP}}$ has branch locus
\[
\{ e^{\pm \frac{\alpha}{4}i},\> i e^{\pm \frac{\alpha}{4} i},\> 
- e^{\pm \frac{\alpha}{4}i}, \> -i e^{\pm \frac{\alpha}{4} i}\}. 
\]
So $M$ can be expressed as a $2$-sheeted branched cover of $\overline{\mathbb{C}}$ in the following way. \\

\begin{picture}(340,150)

\put(10,90){\line(1,0){130}} \put(170,90){\line(1,0){130}}
\put(70,30){\line(0,1){120}} \put(230,30){\line(0,1){120}}

\put(70,90){\circle{90}} \put(230,90){\circle{90}} 

\put(110.5,110){\circle*{2}} \put(110.5,70){\circle*{2}} \put(29.5,110){\circle*{2}} \put(29.5,70){\circle*{2}} 
\put(50,130.5){\circle*{2}} \put(90,130.5){\circle*{2}} \put(50,49.5){\circle*{2}} \put(90,49.5){\circle*{2}} 

\put(270.5,110){\circle*{2}} \put(270.5,70){\circle*{2}} \put(189.5,110){\circle*{2}} \put(189.5,70){\circle*{2}} 
\put(210,130.5){\circle*{2}} \put(250,130.5){\circle*{2}} \put(210,49.5){\circle*{2}} \put(250,49.5){\circle*{2}} 

\put(120,140){{\rm (i) }} \put(280,140){{\rm (ii) }}

\put(130,10){{\rm \textbf{figure (tCLP)}}}

\put(103,95){$+$} \put(118,95){$-$} \put(29,95){$+$} \put(14,95){$-$} \put(73,126){$+$} \put(73,137){$-$} 
\put(73,49){$+$} \put(73,38){$-$} 

\put(263,95){$+$} \put(278,95){$-$} \put(189,95){$+$} \put(174,95){$-$} \put(233,126){$+$} \put(233,137){$-$} 
\put(233,49){$+$} \put(233,38){$-$} 

\put(115,110){$e^{\frac{\alpha}{4}i}$} \put(115,70){$e^{-\frac{\alpha}{4}i}$} \put(93,131){$i e^{-\frac{\alpha}{4}i}$} 
\put(30,131){$i e^{\frac{\alpha}{4}i}$} \put(0,110){$-e^{-\frac{\alpha}{4}i}$} \put(3,65){$-e^{\frac{\alpha}{4}i}$} 
\put(20,38){$-i e^{-\frac{\alpha}{4}i}$} \put(90,43){$-i e^{\frac{\alpha}{4}i}$} 

\thicklines

\qbezier(110.5,110)(120,90)(110.5,70) \qbezier(29.5,110)(20,90)(29.5,70) 
\qbezier(50,130.5)(70,140)(90,130.5) \qbezier(50,49.5)(70,40)(90,49.5) 

\qbezier(270.5,110)(280,90)(270.5,70) \qbezier(189.5,110)(180,90)(189.5,70) 
\qbezier(210,130.5)(230,140)(250,130.5) \qbezier(210,49.5)(230,40)(250,49.5) 

\end{picture}
$\,$\\
We prepare two copies of $\overline{\mathbb{C}}$ and slit them along the thick lines in {\rm figure (tCLP)}. 
Identifying each of the upper (resp. lower) edges of the thick lines in {\rm (i)} with 
each of the lower (resp. upper) edges of the thick lines in (ii), we obtain the hyperelliptic Riemann surface $M$ of genus $3$ 
(see the following figure). 
Note that each of thin lines joining two branch points in {\rm figure (tCLP)} is corresponding to each of thick lines 
joining two branch points in the following figure. \\

\begin{picture}(340,160)

\put(135,148){$(e^{\frac{\alpha}{4}i},0)$} \put(134,122){$(e^{-\frac{\alpha}{4}i},\,0)$} 
\put(132,105){$(-i e^{\frac{\alpha}{4}i},\,0)$} 
\put(130,82){$(-i e^{-\frac{\alpha}{4}i},\,0)$} \put(132,65){$(-e^{\frac{\alpha}{4}i} ,\,0)$} 
\put(130,40){$(-e^{-\frac{\alpha}{4}i},\,0)$}
\put(134,24){$(i e^{\frac{\alpha}{4}i},\,0)$} \put(133,0){$(i e^{-\frac{\alpha}{4}i},\,0)$} 

\put(122,134){$-$} \put(122,93){$-$} \put(122,53){$-$} \put(122,13){$-$} 
\put(212,133){$-$} \put(212,93){$-$} \put(212,53){$-$} \put(212,13){$-$} 

\put(90,133){$+$} \put(90,93){$+$} \put(90,53){$+$} \put(90,13){$+$} 
\put(180,133){$+$} \put(180,93){$+$} \put(180,53){$+$} \put(180,13){$+$} 

\put(110,150){\circle*{2}} \put(110,120){\circle*{2}} \put(110,110){\circle*{2}} \put(110,80){\circle*{2}} 
\put(110,70){\circle*{2}} \put(110,40){\circle*{2}} \put(110,30){\circle*{2}} \put(110,0){\circle*{2}}
\put(200,150){\circle*{2}} \put(200,120){\circle*{2}} \put(200,110){\circle*{2}} \put(200,80){\circle*{2}} 
\put(200,70){\circle*{2}} \put(200,40){\circle*{2}} \put(200,30){\circle*{2}} \put(200,0){\circle*{2}}

\qbezier(110,150)(100,150)(100,135) \qbezier(100,135)(100,120)(110,120)
\qbezier[10](110,150)(120,150)(120,135) \qbezier[10](120,135)(120,120)(110,120)

\qbezier(110,110)(100,110)(100,95) \qbezier(100,95)(100,80)(110,80)
\qbezier[10](110,110)(120,110)(120,95) \qbezier[10](120,95)(120,80)(110,80)

\qbezier(110,70)(100,70)(100,55) \qbezier(100,55)(100,40)(110,40)
\qbezier[10](110,70)(120,70)(120,55) \qbezier[10](120,55)(120,40)(110,40)

\qbezier(110,30)(100,30)(100,15) \qbezier(100,15)(100,0)(110,0)
\qbezier[10](110,30)(120,30)(120,15) \qbezier[10](120,15)(120,0)(110,0)

\qbezier(200,150)(190,150)(190,135) \qbezier(190,135)(190,120)(200,120)
\qbezier[10](200,150)(210,150)(210,135) \qbezier[10](210,135)(210,120)(200,120)

\qbezier(200,110)(190,110)(190,95) \qbezier(190,95)(190,80)(200,80)
\qbezier[10](200,110)(210,110)(210,95) \qbezier[10](210,95)(210,80)(200,80)

\qbezier(200,70)(190,70)(190,55) \qbezier(190,55)(190,40)(200,40)
\qbezier[10](200,70)(210,70)(210,55) \qbezier[10](210,55)(210,40)(200,40)

\qbezier(200,30)(190,30)(190,15) \qbezier(190,15)(190,0)(200,0)
\qbezier[10](200,30)(210,30)(210,15) \qbezier[10](210,15)(210,0)(200,0)

\put(50,150){(i)} \put(245,150){(ii)}

\thicklines

\qbezier(110,150)(30,150)(30,75) \qbezier(30,75)(30,0)(110,0)

\qbezier(200,150)(280,150)(280,75) \qbezier(280,75)(280,0)(200,0)

\qbezier(110,120)(80,115)(110,110) \qbezier(110,80)(80,75)(110,70)
\qbezier(110,40)(80,35)(110,30) 

\qbezier(200,120)(230,115)(200,110) \qbezier(200,80)(230,75)(200,70)
\qbezier(200,40)(230,35)(200,30) 

\end{picture}
$\,$\\
To describe $1$-cycles on $M$, we consider the following key $1$-cycles:
\begin{align*}
C_1 &= \{(z,\,w)=(t e^{\frac{\pi}{4}i},\,\sqrt{t^8-a t^4+1}) \>|\>t: \infty\to 0,\> \sqrt{*}>0 \}\\
& \qquad \cup \{(z,\,w)=(t e^{-\frac{\pi}{4}i},\, \sqrt{t^8-a t^4+1} )\>|\>t: 0 \to \infty ,\> \sqrt{*}>0 \}, \\
C_2 &= \{(z,\,w)=(t,\, \sqrt{t^8+a t^4+1})\>|\>t: \infty \to 0,\> \sqrt{*}>0 \} \\
 & \qquad \cup \{(z,\,w)=(-i t ,\, \sqrt{t^8+a t^4+1})\>| \> t:0\to \infty, \> \sqrt{*}>0\}. 
\end{align*}
Since $C_1\cap C_2\neq  \o $, we may choose $C_1$ and $C_2$ in the following figure. \\

\begin{picture}(340,120)

\put(0,60){\line(1,0){70}} \put(10,0){\line(0,1){120}} 
\qbezier(30,19.5)(43,26)(50.5,40) \qbezier(30,100.5)(43,94)(50.5,80) 
\put(50.5,80){\circle*{2}} \put(50.5,40){\circle*{2}} \put(30,19.5){\circle*{2}} 
\put(50,110){{\rm (i) }} 
\put(43,65){$+$} \put(58,65){$-$}  \put(13,19){$+$} \put(13,8){$-$} 

\put(90,60){\line(1,0){70}} \put(100,0){\line(0,1){120}} 
\qbezier(120,19.5)(133,26)(140.5,40) \qbezier(120,100.5)(133,94)(140.5,80) 
\put(140.5,80){\circle*{2}} \put(140.5,40){\circle*{2}} \put(120,19.5){\circle*{2}} 
\put(140,110){{\rm (ii) }} 
\put(133,65){$+$} \put(148,65){$-$}  \put(103,19){$+$} \put(103,8){$-$} 

\put(15,80){$C_1$} \put(20,30){$C_1$} \put(30,48){$C_2$} \put(-5,25){$C_2$} 
\put(85,4){$C_2$} \put(148,48){$C_2$} 

\thicklines

\qbezier(50.5,80)(60,60)(50.5,40) \qbezier(-10,19.5)(10,10)(30,19.5) 

\qbezier(140.5,80)(150,60)(140.5,40) \qbezier(80,19.5)(100,10)(120,19.5) 

\put(10,60){\line(1,1){42}} \put(30,80){\line(1,3){3}} \put(30,80){\line(3,1){9}} 
\put(10,60){\line(1,-1){42}} \put(30,40){\line(-3,1){9}} \put(30,40){\line(-1,3){3}} 

\put(10,60){\line(1,0){45}} \put(30,60){\line(2,1){7}} \put(30,60){\line(2,-1){7}} 
\put(10,60){\line(0,-1){45}} \put(10,35){\line(-1,2){4}} \put(10,35){\line(1,2){4}}

\put(100,15){\line(0,-1){15}} \put(100,4){\line(-1,2){3}} \put(100,4){\line(1,2){3}} 
\put(145,60){\line(1,0){15}} \put(150,60){\line(2,1){6}} \put(150,60){\line(2,-1){6}}

\thinlines

\put(230,110){\circle*{2}} \put(230,80){\circle*{2}} \put(230,70){\circle*{2}} \put(230,40){\circle*{2}} 
\put(260,110){\circle*{2}} \put(260,80){\circle*{2}} \put(260,70){\circle*{2}} \put(260,40){\circle*{2}} 

\qbezier(230,110)(220,110)(220,95) \qbezier(220,95)(220,80)(230,80)
\qbezier[10](230,110)(240,110)(240,95) \qbezier[10](240,95)(240,80)(230,80)

\qbezier[10](230,70)(240,70)(240,55) \qbezier[10](240,55)(240,40)(230,40)

\put(180,110){(i)} \put(290,110){(ii)} 

\qbezier(230,110)(170,110)(170,35) \qbezier(230,80)(200,75)(230,70) \qbezier(230,40)(200,35)(230,30)
\qbezier(230,30)(220,30)(220,15) \qbezier[12](230,30)(240,30)(240,15) 

\qbezier(230,70)(220,70)(220,55) \qbezier(220,55)(220,40)(230,40)

\qbezier(260,110)(250,110)(250,95) \qbezier(250,95)(250,80)(260,80)
\qbezier[10](260,110)(270,110)(270,95) \qbezier[10](270,95)(270,80)(260,80)

\qbezier[10](260,70)(270,70)(270,55) \qbezier[10](270,55)(270,40)(260,40)

\qbezier(260,110)(320,110)(320,35) \qbezier(260,80)(290,75)(260,70) \qbezier(260,40)(290,35)(260,30)
\qbezier(260,30)(250,30)(250,15) \qbezier[12](260,30)(270,30)(270,15) 

\qbezier(260,70)(250,70)(250,55) \qbezier(250,55)(250,40)(260,40)

\put(174,55){$C_1$} \put(204,47){$C_2$} \put(280,55){$C_2$} 

\thicklines

\qbezier(215,75)(160,40)(200,103) \qbezier[10](215,75)(220,100)(200,103) 
\put(192,89){\line(0,1){9}} \put(192,89){\line(2,1){8}} 

\qbezier(220,55)(170,60)(220,95) \put(200,78){\line(1,3){3}} \put(200,78){\line(3,1){9}} 

\qbezier(270,55)(300,75)(270,95) \put(285,75){\line(-1,-1){6}} \put(285,75){\line(1,-3){3}} 

\end{picture}

Next we shall consider other $1$-cycles. From $C_1\cap (\varphi^3 (C_1)\cap \varphi^3 (C_2))\neq \o$ and 
$C_1\cap (\varphi^2 (C_1)\cap \varphi^2 (C_2))\neq \o$, we have the following two figures. 

\begin{picture}(340,130)

\put(-10,60){\line(1,0){70}} \put(50,0){\line(0,1){110}} 

\qbezier(30,19.5)(17,26)(9.5,40) \qbezier(70,19.5)(83,26)(90.5,40) 
\put(9.5,80){\circle*{2}} \put(9.5,40){\circle*{2}} \put(30,19.5){\circle*{2}} 
\put(10,110){{\rm (i) }} 
\put(9,65){$+$} \put(-6,65){$-$}  \put(40,19){$+$} \put(40,8){$-$} 

\put(100,60){\line(1,0){70}} \put(160,0){\line(0,1){110}} 

\qbezier(140,19.5)(127,26)(119.5,40) 
\put(119.5,80){\circle*{2}} \put(119.5,40){\circle*{2}} \put(140,19.5){\circle*{2}} 
\put(120,110){{\rm (ii) }} 
\put(119,65){$+$} \put(104,65){$-$}  \put(150,19){$+$} \put(150,8){$-$} 

\put(68,47){$\varphi^3 (C_1)$} \put(0,10){$\varphi^3 (C_1)$} 
\put(19,68){$\varphi^3 (C_2)$} \put(53,20){$\varphi^3 (C_2)$} 

\put(165,5){$\varphi^3 (C_2)$} \put(104,48){$\varphi^3 (C_2)$} 

\thicklines

\qbezier(9.5,80)(0,60)(9.5,40) \qbezier(70,19.5)(50,10)(30,19.5) 

\qbezier(119.5,80)(110,60)(119.5,40) \qbezier(180,19.5)(160,10)(140,19.5) 

\put(50,60){\line(1,-1){40}} \put(70,40){\line(3,-1){9}} \put(70,40){\line(1,-3){3}} 
\put(50,60){\line(-1,-1){40}} \put(30,40){\line(1,3){3}} \put(30,40){\line(3,1){9}} 

\put(50,60){\line(0,-1){45}} \put(50,45){\line(1,-2){4}} \put(50,45){\line(-1,-2){4}} 
\put(50,60){\line(-1,0){45}} \put(25,60){\line(2,-1){8}} \put(25,60){\line(2,1){8}} 

\put(160,15){\line(0,-1){15}} \put(160,9){\line(1,-2){3}} \put(160,9){\line(-1,-2){3}} 
\put(115,60){\line(-1,0){15}} \put(105,60){\line(2,1){5}} \put(105,60){\line(2,-1){5}}

\thinlines

\put(250,20){\circle*{2}} \put(250,50){\circle*{2}} \put(250,60){\circle*{2}} \put(250,90){\circle*{2}} 
\put(280,20){\circle*{2}} \put(280,50){\circle*{2}} \put(280,60){\circle*{2}} \put(280,90){\circle*{2}} 

\qbezier(250,90)(240,90)(240,75) \qbezier(240,75)(240,60)(250,60)
\qbezier[10](250,90)(260,90)(260,75) \qbezier[10](260,75)(260,60)(250,60)

\qbezier(250,50)(240,50)(240,35) \qbezier(240,35)(240,20)(250,20)
\qbezier[10](250,50)(260,50)(260,35) \qbezier[10](260,35)(260,20)(250,20)

\qbezier(280,90)(270,90)(270,75) \qbezier(270,75)(270,60)(280,60)
\qbezier[10](280,90)(290,90)(290,75) \qbezier[10](290,75)(290,60)(280,60)

\qbezier(280,50)(270,50)(270,35) \qbezier(270,35)(270,20)(280,20)
\qbezier[10](280,50)(290,50)(290,35) \qbezier[10](290,35)(290,20)(280,20)

\put(200,120){(i)} \put(305,120){(ii)}

\qbezier(195,55)(195,20)(210,0) \qbezier(195,55)(195,90)(210,110) 

\qbezier(325,55)(325,20)(310,0) \qbezier(325,55)(325,90)(310,110) 

\qbezier(250,100)(220,95)(250,90)
\qbezier(250,60)(220,55)(250,50)
\qbezier(250,20)(220,15)(250,10) 

\qbezier(280,100)(310,95)(280,90)
\qbezier(280,60)(310,55)(280,50)
\qbezier(280,20)(310,15)(280,10) 

\put(190,65){$\varphi^3 (C_1)$} \put(205,30){$\varphi^3 (C_2)$} \put(290,27){$\varphi^3 (C_2)$} 

\thicklines

\qbezier(235,95)(190,100)(235,55) \qbezier[7](235,95)(225,100)(220,105) \qbezier[15](235,55)(225,70)(220,105) 
\put(216,78){\line(0,1){9}} \put(216,78){\line(-2,1){8}} 

\qbezier(240,75)(200,60)(240,35) \put(227,45){\line(0,1){6}} \put(227,45){\line(-3,1){6}} 

\qbezier(310,75)(310,40)(290,35) \qbezier(290,75)(310,110)(310,75) 
 \put(310,75){\line(1,-2){4}} \put(310,75){\line(-1,-2){4}} 

\end{picture}

\begin{picture}(340,150)

\put(-10,60){\line(1,0){70}} \put(50,10){\line(0,1){110}} 

\qbezier(30,19.5)(17,26)(9.5,40) \qbezier(30,100.5)(17,94)(9.5,80) 
\put(9.5,80){\circle*{2}} \put(9.5,40){\circle*{2}} \put(30,100.5){\circle*{2}} \put(70,100.5){\circle*{2}} 
\put(10,110){{\rm (i) }} 
\put(9,65){$+$} \put(-6,65){$-$}  \put(40,96){$+$} \put(40,107){$-$}

\put(80,60){\line(1,0){70}} \put(140,10){\line(0,1){110}} 

\qbezier(120,19.5)(107,26)(99.5,40) \qbezier(120,100.5)(107,94)(99.5,80) 
\put(99.5,80){\circle*{2}} \put(99.5,40){\circle*{2}} \put(120,100.5){\circle*{2}} \put(160,100.5){\circle*{2}} 
\put(100,110){{\rm (ii) }} 
\put(99,65){$+$} \put(84,65){$-$}  \put(128,96){$+$} \put(128,106){$-$} 

\put(24,26){$\varphi^2 (C_1)$} \put(-10,85){$\varphi^2 (C_1)$} \put(10,48){$\varphi^2 (C_2)$} \put(55,85){$\varphi^2 (C_2)$} \put(145,110){$\varphi^2 (C_2)$} 
\put(80,48){$\varphi^2 (C_2)$} 

\thicklines

\qbezier(9.5,80)(0,60)(9.5,40) \qbezier(70,100.5)(50,110)(30,100.5) 

\qbezier(99.5,80)(90,60)(99.5,40) \qbezier(160,100.5)(140,110)(120,100.5) 

\put(50,60){\line(-1,-1){40}} \put(35,45){\line(-3,-1){9}} \put(35,45){\line(-1,-3){3}}
\put(50,60){\line(-1,1){40}} \put(30,80){\line(3,-1){9}} \put(30,80){\line(1,-3){3}} 

\put(50,60){\line(-1,0){45}} \put(30,60){\line(-2,1){8}} \put(30,60){\line(-2,-1){8}} 
\put(50,60){\line(0,1){45}} \put(50,85){\line(-1,-2){4}} \put(50,85){\line(1,-2){4}} 

\put(140,105){\line(0,1){15}} \put(140,115){\line(1,-2){3}} \put(140,115){\line(-1,-2){3}} 

\put(95,60){\line(-1,0){15}} \put(90,60){\line(-2,1){6}} \put(90,60){\line(-2,-1){6}}

\thinlines 

\put(240,10){\circle*{2}} \put(240,40){\circle*{2}} \put(240,50){\circle*{2}} \put(240,80){\circle*{2}} 
\put(270,10){\circle*{2}} \put(270,40){\circle*{2}} \put(270,50){\circle*{2}} \put(270,80){\circle*{2}} 

\qbezier(240,120)(230,120)(230,105) \qbezier(230,105)(230,90)(240,90)
\qbezier[10](240,120)(250,120)(250,105) \qbezier[10](250,105)(250,90)(240,90)

\qbezier(240,80)(230,80)(230,65) \qbezier(230,65)(230,50)(240,50)
\qbezier[10](240,80)(250,80)(250,65) \qbezier[10](250,65)(250,50)(240,50)

\qbezier(240,40)(230,40)(230,25) \qbezier(230,25)(230,10)(240,10)
\qbezier[10](240,40)(250,40)(250,25) \qbezier[10](250,25)(250,10)(240,10)

\qbezier(270,120)(260,120)(260,105) \qbezier(260,105)(260,90)(270,90)
\qbezier[10](270,120)(280,120)(280,105) \qbezier[10](280,105)(280,90)(270,90)

\qbezier(270,80)(260,80)(260,65) \qbezier(260,65)(260,50)(270,50)
\qbezier[10](270,80)(280,80)(280,65) \qbezier[10](280,65)(280,50)(270,50)

\qbezier(270,40)(260,40)(260,25) \qbezier(260,25)(260,10)(270,10)
\qbezier[10](270,40)(280,40)(280,25) \qbezier[10](280,25)(280,10)(270,10)

\put(190,130){(i)} \put(295,130){(ii)}

\qbezier(175,85)(175,10)(240,10)

\qbezier(325,85)(325,10)(270,10)

\qbezier(240,90)(210,85)(240,80)
\qbezier(240,50)(210,45)(240,40) 

\qbezier(270,90)(300,85)(270,80)
\qbezier(270,50)(300,45)(270,40) 

\put(178,100){$\varphi^2 (C_1)$} \put(185,40){$\varphi^2 (C_2)$} 
\put(285,114){$\varphi^2 (C_2)$} 

\thicklines 

\qbezier[10](225,85)(215,100)(210,120) \qbezier[20](225,45)(210,70)(210,120)
\qbezier(225,85)(180,120)(225,45)

\put(206,80){\line(-1,1){7}} \put(206,80){\line(1,5){2}} 

\qbezier(230,65)(150,140)(230,25) \put(206,60){\line(-2,1){8}} \put(206,60){\line(0,1){8}} 
\qbezier(300,110)(320,40)(280,25) \qbezier(280,65)(290,70)(300,110)
\put(306,60){\line(-1,-1){7}} \put(306,60){\line(1,-2){4}}

\end{picture}

Therefore, we find a canonical homology basis as follows. 
\begin{align*}
A_1 &= C_2, \quad B_1 = C_1, \quad A_2= \varphi^3 (C_2),\quad 
B_2 = B_1+\varphi^3 (C_1), \\
A_3 &= \varphi^2 (C_2),\quad 
B_3 = B_2+\varphi^2 (C_1). 
\end{align*}

\begin{picture}(340,100)

\qbezier(20,50)(20,90)(160,90) \qbezier(160,90)(300,90)(300,50) 
\qbezier(20,50)(20,10)(160,10) \qbezier(160,10)(300,10)(300,50) 

\qbezier(55,55)(80,35)(105,55) \qbezier(135,55)(160,35)(185,55) \qbezier(215,55)(240,35)(265,55) 
\qbezier(60,52)(80,62)(100,52) \qbezier(140,52)(160,62)(180,52) \qbezier(220,52)(240,62)(260,52)

\put(30,85){{\rm (ii)}} \put(30,15){{\rm (i)}} 

\qbezier(45,50)(45,30)(80,30) \qbezier(115,50)(115,30)(80,30) 
\qbezier(45,50)(45,70)(80,70) \qbezier(115,50)(115,70)(80,70) 

\qbezier(125,50)(125,30)(160,30) \qbezier(195,50)(195,30)(160,30) 
\qbezier(125,50)(125,70)(160,70) \qbezier(195,50)(195,70)(160,70) 

\qbezier(205,50)(205,30)(240,30) \qbezier(275,50)(275,30)(240,30) 
\qbezier(205,50)(205,70)(240,70) \qbezier(275,50)(275,70)(240,70) 

\qbezier(71,46)(55,38)(55,20) \qbezier[15](55,20)(75,25)(71,46) 

\qbezier(151,46)(135,35)(140,10) \qbezier[15](140,10)(160,25)(151,46) 

\qbezier(249,46)(265,38)(265,20) \qbezier[15](249,46)(243,25)(265,20)

\put(80,30){\line(-2,1){5}} \put(80,30){\line(-2,-1){5}} \put(100,25){$A_1$}
\put(160,30){\line(-2,1){5}} \put(160,30){\line(-2,-1){5}} \put(180,25){$A_2$}
\put(240,30){\line(-2,1){5}} \put(240,30){\line(-2,-1){5}} \put(275,35){$A_3$}

\put(62.5,40){\line(-2,-1){5}}  \put(62.5,40){\line(0,-1){5}}  \put(50,10){$B_1$}
\put(144.5,40){\line(-2,-1){5}} \put(144.5,40){\line(0,-1){5}} \put(135,0){$B_2$}
\put(257.5,40){\line(2,-1){5}} \put(257.5,40){\line(0,-1){5}} \put(260,10){$B_3$}

\end{picture}

\subsubsection{period matrix}

Key $1$-cycles of the canonical homology basis as in \S~\ref{tCLP-canonical} are given by $C_1$ and $C_2$. 
Straightforward calculation yields 
\begin{align}
\int_{C_1} \dfrac{1-z^2}{w} dz &= -\sqrt{2} i \int_0^{\infty} \dfrac{1-t^2}{\sqrt{t^8-a t^4+1}} dt
= 0, \label{tCLP-period1}\\
\int_{C_1} \dfrac{i (1+z^2)}{w} dz &= 
2\sqrt{2} \int_0^1 \dfrac{1+t^2}{\sqrt{t^8-a t^4+1}} dt, \label{tCLP-period2}\\
\int_{C_1} \dfrac{2 z}{w} dz &= 
-8 i \int_0^1 \dfrac{t}{\sqrt{t^8-a t^4+1}} dt, \label{tCLP-period3}\\
\int_{C_1} \dfrac{z^4-z^6}{w^3} dz &= 
\sqrt{2} i \int_0^{\infty} \dfrac{t^4-t^6}{\sqrt{t^8-a t^4+1}^3} dt=0, \label{tCLP-period4}\\
\int_{C_1} \dfrac{i(z^4+z^6)}{w^3} dz &= 
-2 \sqrt{2} \int_0^1 \dfrac{t^4+t^6}{\sqrt{t^8-a t^4+1}^3} dt, \label{tCLP-period5}\\
\int_{C_1} \dfrac{z^5}{w^3} dz &= 
4 i \int_0^1 \dfrac{t^5}{\sqrt{t^8-a t^4+1}^3} dt, \label{tCLP-period6} \\
\int_{C_2} \dfrac{1-z^2}{w} dz &= -i \int_0^{\infty} \dfrac{1+t^2}{\sqrt{t^8+a t^4+1}} dt
= -2 i \int_0^{1} \dfrac{1+t^2}{\sqrt{t^8+a t^4+1}} dt, \label{tCLP-period7} \\
\int_{C_2} \dfrac{i (1+z^2)}{w} dz &= 
-2 i \int_0^{1} \dfrac{1+t^2}{\sqrt{t^8+a t^4+1}} dt,  \label{tCLP-period8}\\
\int_{C_2} \dfrac{2 z}{w} dz &= 
-8 \int_0^1 \dfrac{t}{\sqrt{t^8+a t^4+1}} dt, \label{tCLP-period9} \\
\int_{C_2} \dfrac{z^4-z^6}{w^3} dz &= 
- i \int_0^{\infty} \dfrac{t^4+t^6}{\sqrt{t^8+a t^4+1}^3} dt=
-2 i \int_0^{1} \dfrac{t^4+t^6}{\sqrt{t^8+a t^4+1}^3} dt, \label{tCLP-period10} 
\end{align}
\begin{align}
\int_{C_2} \dfrac{i(z^4+z^6)}{w^3} dz &= 
-2 i \int_0^{1} \dfrac{t^4+t^6}{\sqrt{t^8+a t^4+1}^3} dt, \label{tCLP-period11} \\
\int_{C_2} \dfrac{z^5}{w^3} dz &= 
-4 \int_0^1 \dfrac{t^5}{\sqrt{t^8+a t^4+1}^3} dt. \label{tCLP-period12} 
\end{align}
By setting 
\begin{align*}
A &= 2\sqrt{2} \int_0^1 \dfrac{1+t^2}{\sqrt{t^8-a t^4+1}} dt, \quad 
B = 2 \int_0^{1} \dfrac{1+t^2}{\sqrt{t^8+a t^4+1}} dt, \\
C &= 8 \int_0^1 \dfrac{t}{\sqrt{t^8+a t^4+1}} dt, \quad 
D = 8 \int_0^1 \dfrac{t}{\sqrt{t^8-a t^4+1}} dt, \\
E &= 2 \sqrt{2} \int_0^1 \dfrac{t^4+t^6}{\sqrt{t^8-a t^4+1}^3} dt, \quad 
F = 2 \int_0^{1} \dfrac{t^4+t^6}{\sqrt{t^8+a t^4+1}^3} dt, \\
H &= 4 \int_0^1 \dfrac{t^5}{\sqrt{t^8+a t^4+1}^3} dt, \quad 
I = 4 \int_0^1 \dfrac{t^5}{\sqrt{t^8-a t^4+1}^3} dt, 
\end{align*}
and \eqref{tCLP-period1}--\eqref{tCLP-period12}, we have 
\[
\int_{C_1} G = 
\begin{pmatrix}
0 \\
A \\
-i D \\
0 \\
-E \\
i I
\end{pmatrix}, \quad 
\int_{C_2} G = 
\begin{pmatrix}
-i B \\
-i B \\
-C \\
-i F \\
-i F \\
-H
\end{pmatrix}.
\]
Therefore, the period matrix of the abelian differentials of the second kind is given by 
\[
\begin{pmatrix}
- i B   &   i B  &  i B  & 0  &   -A   &  -A \\
- i B  &   -i B  &  i B  &  A  &  A  &  0 \\
- C   &   C  &  - C  &  - i D  &  0  &  -i D \\
- i F   &   i F  &  i F  &  0  & E  &   E \\ 
- i F  &  -i F   &  i F  &  -E  & -E  &  0  \\ 
- H &   H  &   -H  &  i I  &  0  &  i I
\end{pmatrix}. 
\]

$\,$\\
Norio Ejiri \\ 
Department of Mathematics, Meijo University \\
Tempaku, Nagoya 468-8502, Japan.\\
ejiri@meijo-u.ac.jp\\
$\,$\\
Toshihiro Shoda \\
Faculty of Education, Saga University \\
1 Honjo-machi, Saga-city, Saga, 840-8502, Japan. \\ 
tshoda@cc.saga-u.ac.jp

\end{document}